\newtheorem{thm}{Theorem}[section]
\newtheorem{cor}[thm]{Corollary}
\newtheorem{prop}[thm]{Proposition}
\newtheorem{dfn&prop}[thm]{Definition and Proposition}
\newtheorem{dfn&thm}[thm]{Definition and Theorem}
\theoremstyle{definition}
\newtheorem{defn}[thm]{Definition}
\newtheorem{observation}[thm]{Observation}
\newtheorem*{structure}{Structure of the article}
\newtheorem{discussion}[thm]{}
\theoremstyle{remark}
\newtheorem{Claim_num}{Claim}
\newtheorem*{remark}{Remark}
\numberwithin{equation}{section}
\newenvironment{subproof1}{\begin{proof}[Proof of claim 1.]}{%
\end{proof}}
\newenvironment{subproof2}{\begin{proof}[Proof of claim 2.]}{%
\end{proof}}
\def\phi{\varphi}
\def\B{{\mathcal{B}}}
\def\CB{{\mathcal{CB}}}
\def\C{{\mathbb{C}}}
\def\D{{\mathbb{D}}}
\def\N{{\mathbb{N}}}
\def\Z{{\mathbb{Z}}}
\def\R{{\mathbb{R}}}
\def\H{{\mathbb{H}}}
\def\Q{\mathbb {Q}}
\newcommand{\e}{\operatorname{e}}
\newcommand{\id}{\operatorname{id}}
\newcommand{\Ima}{\operatorname{Im}}
\newcommand{\Rea}{\operatorname{Re}}
\newcommand{\Addr}{\operatorname{Addr}}
\newcommand{\T}{\mathcal{T}}
\def\s{{\underline s}}
\newcommand*{\defeq}{\mathrel{\vcenter{\baselineskip0.5ex \lineskiplimit0pt
			\hbox{\scriptsize.}\hbox{\scriptsize.}}}%
	=}
\newcommand{\eqdef}{=\mathrel{\vcenter{\baselineskip0.5ex \lineskiplimit0pt
			\hbox{\scriptsize.}\hbox{\scriptsize.}}}}
\title[Criniferous maps with absorbing Cantor bouquets]{Criniferous entire maps with \\absorbing Cantor bouquets}
\author[L. Pardo-Sim\'{o}n]{Leticia Pardo-Sim\'{o}n}
\address{Department of Mathematics \\ The University of Manchester \\ Manchester \\ M13 9PL \\ United Kingdom \\ 
	 \textsc{\newline \indent 
	   \href{https://orcid.org/0000-0003-4039-5556%
	     }{\includegraphics[width=1em,height=1em]{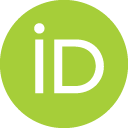} {\normalfont https://orcid.org/0000-0003-4039-5556}}
	       }}
\email{leticia.pardosimon@manchester.ac.uk}
\subjclass[2010]{Primary 37F10; secondary 54H20, 30D05, 54F15}
\begin{document}

\newcommand{\new}[1]{\textcolor{magenta}{#1}}
\begin{abstract}
It is known that, for many transcendental entire functions in the Eremenko-Lyubich class $\B$, every escaping point can eventually be connected to infinity by a curve of escaping points. When this is the case, we say that the functions are \emph{criniferous}. In this paper, we extend this result to a new class of maps in $\B$. Furthermore, we show that if a map belongs to this class, then its Julia set contains a \textit{Cantor bouquet}; in other words, it is a subset of $\C$ ambiently homeomorphic to a straight brush.
\end{abstract}

\maketitle
\section{Introduction}
In the study of the behaviour under iteration of an entire function $f$, its \textit{escaping set}
\[I(f)\defeq \{z\in\C : f^n(z)\to \infty \text{ as } n\to \infty\}\]
is of particular importance. In spite of the difference in nature of infinity between the polynomial and transcendental cases, in the former  a super-attracting fixed point  and in the latter an essential singularity, some parallels can be drawn. Following up on Fatou's seminal findings for sine maps \cite{Fatou}, Devaney et al. \cite{devaney_Krych, devaney_tangerman},
observed in the eighties the presence of curves escaping uniformly to infinity under iteration of exponential maps. These curves can be understood as analogues of the so-called \textit{dynamic rays} of polynomials \cite{Orsaynotes}, and are a  powerful combinatorial tool in the study of Julia sets. This led Eremenko to conjecture in \cite{erem89} that any point in the escaping set of a transcendental entire map can be connected to infinity by such an escaping curve. Known as the \emph{strong form of Eremenko's conjecture}, this problem has motivated much research in transcendental dynamics in the last 30 years. Using the terminology introduced in \cite{lasse_dreadlocks}, we refer to entire functions having \textit{dynamic rays} or \textit{(Devaney) hairs} as \textit{criniferous}:
\begin{defn}[Criniferous functions \cite{lasse_dreadlocks}] An entire function $f$ is \emph{criniferous} if the following holds for every $z \in I(f)$: For all sufficiently large $n$, there is an arc $\gamma_n$ connecting $f^n(z)$ to $\infty$, in such a way that $f$ maps $\gamma_n$ injectively onto $\gamma_{n+1}$, and so that $\min_{z\in \gamma_n}\vert z\vert \to\infty$ as $n \to \infty$.
\end{defn}
Despite the existence of counterexamples to the strong form of Eremenko's conjecture, \cite{RRRS, lasse_arclike}, increasingly larger subclasses of the \emph{Eremenko-Lyubich class} $\mathcal{B}$ have been shown to be criniferous. Recall that for an entire function $f$, its \textit{singular set} $S(f)$ is the closure of the set of its finite critical and asymptotic values, and that the widely studied \emph{class $\B$} consists of all transcendental entire maps $f$ with bounded $S(f)$. See \cite{Dave_survey} for a survey of results. After it was shown that all exponential and cosine maps are criniferous \cite{Schleicher_Zimmer_exp, dierkCosine}, the same was proved for certain maps $f\in\B$ of \textit{finite order}, i.e., so that $\log \log \vert f(z)\vert=O(\log \vert z \vert)$ as $\vert z \vert \rightarrow \infty$. Namely, the fact of being criniferous was shown for all finite order $f\in \B$  of \textit{disjoint type}, that is, with connected Fatou set $F(f)$ and so that $P(f)\defeq\overline{\bigcup_{n\in \N}f^n(S(f))}\Subset F(f)$; \cite{Baranski_Trees, RRRS}. Even more generally, it is established in \cite[Sections 4 and 5]{RRRS} that a larger class of functions in $\B$ satisfying certain geometric condition, called \textit{uniform head-start}, are criniferous. In particular, this class includes all finite compositions of functions $f\in \B$ of finite order; see \cite[Theorem 1.2]{RRRS}.

In this paper, we extend this result to a further subclass of $\B$. More specifically, if $f\in \B$, then, it is easy to see that for all $\lambda \in \C^{\ast}$ with $\vert \lambda \vert$ sufficiently small, the function $\lambda f$ is of disjoint type and belongs to the \textit{parameter space} of $f$, that is, $f$ and $\lambda f$ are quasiconformally equivalent; see \S \ref{sec_CB} for definitions. In particular, their dynamics near infinity are related by a certain analogue of Böttcher's Theorem for transcendental maps,~\cite{lasseRigidity}. One might regard disjoint type functions as having the simplest dynamics among those in the parameter space of $f$ and, in particular, they play an analogous role for $f$ as $z\mapsto z^d$ does for a polynomial of degree $d$. By results in \cite{lasseBrushing}, we know that if $\lambda f\in \B$ is of finite order and of disjoint type, then its Julia set $J(\lambda f)$ is a \emph{Cantor bouquet}. This is a topological object introduced by Aarts and Oversteegen \cite{AartsOversteegen} in the study of the dynamics of exponential and sine maps, defined as follows.

\begin{defn}[Cantor bouquet {\cite{AartsOversteegen,lasseBrushing}}] \label{def_brush}
A subset $B$ of $[0,+\infty) \times (\R\setminus\Q)$ is a \emph{straight brush} if the following properties are satisfied:
\begin{itemize}
\item The set $B$ is a closed subset of $\R^2$.
\item For each $(x,y) \in B$, there is $t_y \geq 0$ so that $\{x \colon (x,y)\in B\} = [t_y, +\infty)$. 
The set $[t_y, +\infty) \times \{y\}$ is called the \textit{hair} attached at $y$, and the point $(t_y, y)$ is called its \textit{endpoint}.
\item The set $\{y \colon (x,y) \in B \text{ for some } x\}$ is dense in $\R\setminus\Q$. 
Moreover, for every $(x, y) \in B$, there exist two sequences of hairs attached respectively at $\beta_n, \gamma_n \in \R\setminus\Q$ such that $\beta_n < y < \gamma_n$, $\beta_n, \gamma_n \to y$ and $t_{\beta_n}, t_{\gamma_n} \to t_y$ as $n\to\infty$.
\end{itemize}
\noindent A \emph{Cantor bouquet} is any subset $A\subset \R^2$ that is \textit{ambiently homeomorphic} to a straight brush~$B$, that is, there is a homeomorphism $\psi\colon \R^2 \to \R^2$ such that $\psi(A)=B$. A \textit{hair} (resp. \textit{endpoint}) of a Cantor bouquet is any preimage of a hair (resp. endpoint) of a straight brush under a corresponding ambient homeomorphism.
\end{defn}

We shall show that for $f\in \B$, $J(\lambda f)$ being a Cantor bouquet for small $\vert \lambda \vert$ is enough for $f$ to be criniferous. More precisely, we introduce the following class of functions:
\begin{defn}[The class $\CB$] We say that $f \in \B$ belongs to the \textit{class $\CB$} if $J(\lambda f)$ is a Cantor bouquet for some\footnote{and hence all for which $\lambda f$ is of disjoint type; see Proposition \ref{prop_classCB}.} $\vert \lambda \vert$ sufficiently small. 
\end{defn}

We note that the class $\CB$ in particular includes all entire maps that are a finite composition of functions of finite order in $\B$, and more generally, those satisfying a linear uniform head-start condition, see \S\ref{sec_CB}. Our first result generalizes \cite[Theorem~1.2]{RRRS}.
\begin{thm}\label{thm_intro_rays} All functions in $\CB$ are criniferous. In particular, if $f\in \CB$, then every point $z\in I(f)$ can be connected to $\infty$ by a curve $\gamma$ such that $f^n\vert_{\gamma}\to \infty$ uniformly.
\end{thm}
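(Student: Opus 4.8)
The plan is to deduce the theorem from the disjoint-type case, using the transcendental analogue of Böttcher's coordinate to propagate good behaviour near infinity from the disjoint-type member of the parameter space to $f$ itself. Let $f\in\CB$ and fix $\lambda\in\C^{\ast}$ with $|\lambda|$ so small that $g\defeq\lambda f$ is of disjoint type and $J(g)$ is a Cantor bouquet; this is possible by the definition of $\CB$, and the outcome will not depend on the choice of $\lambda$ (cf.\ Proposition~\ref{prop_classCB}). Since $J(g)$ is ambiently homeomorphic to a straight brush, it is a disjoint union of \emph{hairs} — arcs joining a finite endpoint to $\infty$, with each point of $J(g)$ lying on exactly one of them — and since $g$ is disjoint type it is expanding on $J(g)$ in the hyperbolic metric of $F(g)$. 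Combining these two facts, $g$ is criniferous with a dynamically coherent system of hairs: for every $w\in I(g)$ and all sufficiently large $n$ there is an arc $\tilde\gamma_n\subset J(g)$ joining $g^n(w)$ to $\infty$ such that $g(\tilde\gamma_n)=\tilde\gamma_{n+1}$, $g|_{\tilde\gamma_n}$ is injective, and $\min_{\zeta\in\tilde\gamma_n}|\zeta|\to\infty$. I expect this to be the main point requiring work — checking that the Cantor-bouquet hypothesis together with expansion really forces $g$ to permute its hairs without folding — and it should follow from, or be a modest extension of, the analysis of disjoint-type hairs in \cite{lasseBrushing, RRRS, lasse_dreadlocks}.

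Next I would transport this structure to $f$ by invoking the rigidity theorem of \cite{lasseRidigity} (the Böttcher-type theorem mentioned in the introduction). As $f$ and $g=\lambda f$ are quasiconformally equivalent, that theorem furnishes $R>0$ and a homeomorphism $\Phi$ of $\C$, which extends to a homeomorphism of $\hat{\C}$ fixing $\infty$, restricting to a homeomorphism $J_R(g)\to J_R(f)$ that conjugates $g$ to $f$, where $J_R(h)\defeq\{z:|h^n(z)|\ge R\text{ for all }n\ge 0\}$ is the forward-invariant set of points whose orbit stays above level $R$. An ambient homeomorphism fixing $\infty$ carries arcs to arcs, arcs landing at $\infty$ to arcs landing at $\infty$, and — being proper at $\infty$ — sends sequences of arcs leaving every compact set to sequences with the same property; so it preserves every defining feature of a criniferous curve.

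The theorem now follows. Let $z\in I(f)$. Since $f^n(z)\to\infty$ there is $N$ with $f^N(z)\in J_R(f)$; put $w\defeq\Phi^{-1}(f^N(z))\in J_R(g)$. The conjugacy and continuity of $\Phi^{-1}$ at $\infty$ give $g^n(w)=\Phi^{-1}(f^{N+n}(z))\to\infty$, so $w\in I(g)$, and the first paragraph yields hairs $\tilde\gamma_n$ for $w$ for all $n\ge n_1$. Choose $n_2\ge n_1$ so that $\min_{\zeta\in\tilde\gamma_k}|\zeta|\ge R$ for every $k\ge n_2$; then for $n\ge n_2$ one has $\tilde\gamma_n\subset J_R(g)$, because for every $m\ge 0$ the arc $g^m(\tilde\gamma_n)=\tilde\gamma_{n+m}$ has all its points of modulus $\ge R$ (as $n+m\ge n_2$). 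For such $n$ the conjugacy is available on $\tilde\gamma_n$, so $\gamma_n\defeq\Phi(\tilde\gamma_n)$ is an arc in $J_R(f)$ joining $\Phi(g^n(w))=f^{N+n}(z)$ to $\infty$ with $f(\gamma_n)=\Phi(g(\tilde\gamma_n))=\gamma_{n+1}$, $f|_{\gamma_n}$ injective, and $\min_{\zeta\in\gamma_n}|\zeta|\to\infty$ by properness of $\Phi$. Re-indexing by $m=N+n$ shows that $f$ is criniferous.

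It remains to connect $z$ itself to $\infty$ by a curve on which the iterates tend to $\infty$ uniformly. This is the standard consequence of criniferousness: one pulls the arc $\gamma_n$ back along the orbit segment $z,f(z),\dots,f^{n-1}(z)$ as $n\to\infty$ and, using that the $\gamma_n$ lie in $J_R(f)$ and have modulus tending to $\infty$, extracts a limiting curve $\gamma\ni z$ whose forward images exhaust the $\gamma_m$, so that $f^n|_{\gamma}\to\infty$ uniformly; this is carried out, for instance, in the proof of \cite[Theorem~1.2]{RRRS}. Apart from the issue flagged in the first paragraph, the only other non-formal step is to confirm that \cite{lasseRidigity} really supplies an ambient homeomorphism with the mapping properties on $J_R$ used above; granting this, the transport argument — together with the ``absorbing'' observation that every escaping orbit of $f$ eventually enters $J_R(f)$ — completes the proof.
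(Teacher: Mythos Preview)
Your approach is correct and rests on the same two pillars as the paper --- the fact that a disjoint-type map with Cantor-bouquet Julia set has well-behaved hairs (Proposition~\ref{prop_Ts} in the paper supplies exactly the properties you flag as ``the main point requiring work''), and Rempe's conjugacy near infinity (Corollary~\ref{cor_ridigity} here) --- but it is more direct than the paper's route. The paper deduces Theorem~\ref{thm_intro_rays} as a corollary of the stronger Theorem~\ref{thm_intro_CB}, and so passes through the continuous projection $\pi_R\colon J(g)\to X\subset J_R(g)$ of Theorem~\ref{thm_pi_cont}, whose construction is the main technical effort of the article; only then does it invoke Corollary~\ref{cor_CB_criniferous} on the transported bouquet $\hat\theta(X)\subset J(f)$. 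You bypass this entirely: by observing that for $w\in I(g)$ the hair-pieces $\tilde\gamma_n$ eventually lie in $J_R(g)$ (an immediate consequence of Proposition~\ref{prop_Ts}(B),(C) together with the order-preserving bijection $g\vert_\eta$), you can push them through the conjugacy without ever needing a Cantor-bouquet structure inside $J_R(g)$. This buys a shorter, self-contained proof of criniferousness, at the cost of not yielding the strongly absorbing Cantor bouquet in $J(f)$. One small correction: the conjugacy from \cite{lasseRidigity} is not literally a bijection $J_R(g)\to J_R(f)$ but rather a homeomorphism $\theta\colon J_R(g)\to\theta(J_R(g))\subset J(f)$ with $J_{e^2R}(f)\subset\theta(J_R(g))$ and $\theta(z)\in\overline{A(\lambda^2|z|,\lambda^{-2}|z|)}$; choosing $N$ so that $f^N(z)\in J_{e^2R}(f)$ (and using the annulus bound for properness at $\infty$) fixes your argument immediately. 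Your final paragraph matches the paper's, which also cites \cite{RRRS} for the passage from ``criniferous'' to ``$z$ itself lies on a uniformly escaping curve''.
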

In fact, Theorem \ref{thm_intro_rays} is a consequence of a more general result: for each $f\in \CB$, we show that its Julia set contains a Cantor bouquet, consisting of escaping curves with their endpoints, and to which all escaping points are eventually mapped. For each $R>0$, we denote 
\begin{equation}\label{eq_JR}
J_R(f) \defeq \lbrace z \in J(f) : |f^n(z)|\geq R \text{ for all } n \geq 1\rbrace.
\end{equation}
\begin{defn}[Absorbing Cantor bouquets] Let $f$ be entire and let $X\subset J(f)$ be a Cantor bouquet. We say that $X$ is \textit{strongly absorbing} if $f(X)\subset X$ and $J_R(f) \subset X$ for some $R>0$. 
\end{defn}
With this terminology, \cite[Theorem 1.6]{lasseBrushing} implies that all functions that are a finite composition of functions in $\B$ of finite order contain strongly absorbing Cantor bouquets. As the main result of this paper, we generalize this result to all maps in $\CB$. 
\begin{thm}\label{thm_intro_CB}
If $f\in \CB$, then for every $Q>0$, $J(f)$ contains a strongly absorbing Cantor bouquet $X\subset J_Q(f)$.
\end{thm}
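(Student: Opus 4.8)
By definition of the class $\CB$, for small $|\lambda|$ the map $g\defeq \lambda f$ is disjoint type and $J(g)$ is a Cantor bouquet. The quasiconformal equivalence between $f$ and $g$, combined with the transcendental Böttcher-type theorem of \cite{lasseRidigity}, provides a conjugacy between $f$ and $g$ on suitable absorbing neighbourhoods of infinity. I would first make this precise: there is a (quasiconformal, or at least topological) map $\Theta$ defined on a neighbourhood $U$ of $\infty$ with $g(U)\subset U$, such that $\Theta$ conjugates $g|_{U}$ to $f|_{\Theta(U)}$, and such that $U$ is absorbing in the sense that every orbit eventually enters $U$ and stays. The escaping dynamics and the "tail" structure of the Julia set near infinity are thus identical for $f$ and $g$.

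\textbf{Step two: identify the bouquet inside $J(f)$.} Using the conjugacy $\Theta$, set $X_0 \defeq \Theta(J(g)\cap U)$, or more robustly, define $X$ as the closure in $\C$ of $\bigcup_{n\ge 0} f^{-n}(\Theta(J_R(g)))$ intersected appropriately — the point being to produce a forward-invariant set. Concretely, I expect the right object to be $X\defeq \overline{\{z\in J(f): f^n(z)\in \Theta(U) \text{ for all } n\ge 0\}}$, together with its preimages' endpoints, and one must check $f(X)\subset X$ and $J_R(f)\subset X$ for a suitable $R$. The inclusion $J_R(f)\subset X$ should follow because, for $R$ large enough, $\{z: |f^n(z)|\ge R \ \forall n\}$ lies in the absorbing neighbourhood where $f$ and $g$ are conjugate, and there $J_R$ matches $J_R(g)$, which sits inside the Cantor bouquet $J(g)$; forward invariance of $J_R(f)$ is immediate from the definition \eqref{eq_JR}.

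\textbf{Step three: upgrade "homeomorphic image of a Cantor bouquet" to "Cantor bouquet".} This is where the main work lies, and it is essentially the content needed to cross from $\B_{R3S}$ to $\CB$. A homeomorphic image of a straight brush need not be ambiently homeomorphic to one — ambient homeomorphism is a global condition on the embedding in $\C$. I would invoke the characterization of Cantor bouquets via the topological properties of straight brushes (Definition \ref{def_brush}): a closed subset of $\C$ that is a union of disjoint arcs to infinity ("hairs"), each with an endpoint, with the hairs dense-on-both-sides and the endpoints forming the natural ambient structure, is ambiently homeomorphic to a straight brush. So the task reduces to verifying these combinatorial/topological axioms for $X$. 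The escaping-curve structure comes from Theorem \ref{thm_intro_rays} (itself presumably proved just before this as the bridge result), guaranteeing that $X$ is a union of hairs; the density and closedness properties are inherited from $J(g)$ being a Cantor bouquet, transported through $\Theta$ on the absorbing region and extended to the full invariant set by pulling back under the (locally injective, away from the bounded singular set) iterates of $f$.

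\textbf{The hard part} will be Step three — ensuring that pulling back the bouquet structure under $f^{-n}$ across the (finitely many, bounded) critical and asymptotic values of $f$ does not destroy the "straight brush" embedding type: branch points of $f^{-n}$ could in principle cause hairs to merge or the endpoint set to degenerate. I expect this is controlled exactly because $f\in\CB$ forces the disjoint-type model to have a genuine Cantor bouquet, and one shows that no such pathology occurs in $\Theta(U)$, hence none in its iterated preimages, by an expansion/hyperbolicity argument near infinity combined with the fact that $P(f)$ stays away from the relevant region. The remaining verifications — that $X$ is closed, that $f(X)\subset X$, and that $J_R(f)\subset X$ — are comparatively routine once the conjugacy and the absorbing neighbourhood are set up correctly.
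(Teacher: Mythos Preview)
Your overall architecture --- pass to a disjoint-type model $g$ with Cantor bouquet Julia set and transport via the B\"ottcher-type conjugacy of \cite{lasseRidigity} --- is the same as the paper's. But you have misidentified where the real obstacle lies, and this leads to a genuine gap.

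The conjugacy $\theta$ coming from \cite{lasseRidigity} (Corollary~\ref{cor_ridigity}) is only defined on $J_R(g)$ for $R$ large, not on all of $J(g)$. The set $J_R(g)$ is \emph{not} in general a Cantor bouquet: a hair of $J(g)$ may cross the circle $|z|=R$ many times, so $J_R(g)\cap\eta$ can be a disconnected collection of arcs, and the endpoints of its unbounded components need not vary continuously across hairs. Consequently $\theta(J_R(g))$ need not be a Cantor bouquet either, and your candidate sets $X_0=\Theta(J(g)\cap U)$ or $\{z:f^n(z)\in\Theta(U)\text{ for all }n\}$ inherit exactly this defect. The paper resolves this with the projection map $\pi_R$ of Theorem~\ref{thm_intro_pi} (Theorem~\ref{thm_pi_cont}): using the ambient homeomorphism to a straight brush, one finds for each $R$ a Jordan domain $S_R\supset\D_R$ that each hair meets at most once, and then $\pi_R$ slides each point rightward along its hair to the first point whose forward orbit avoids $S_R$. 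The image $X=\pi_R(J(g))$ is a strongly absorbing Cantor bouquet \emph{inside} $J_R(g)$, and it is this $X$ that gets transported by $\theta$. This projection step is the main technical content of the paper, and it is absent from your plan.

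Conversely, your ``hard part'' (Step~3) is not hard. The map $\Theta$ extends to a global quasiconformal self-map of $\C$ (see the footnote to Theorem~\ref{thm_ridigity}), hence $\theta$ is an ambient homeomorphism; once $X\subset J_R(g)$ is a Cantor bouquet, $\theta(X)$ is automatically one too. There is no need to verify the straight-brush axioms by hand, and no need to pull back under $f^{-n}$ or worry about critical values --- the paper never does this, and attempting it would only create the branching problems you anticipate. Finally, note that Theorem~\ref{thm_intro_rays} is deduced \emph{from} Theorem~\ref{thm_intro_CB} (via Corollary~\ref{cor_CB_criniferous}), not the other way around, so you cannot invoke it here.
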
 

The proof of Theorem \ref{thm_intro_CB} relies on the combination of two results: first, by \cite{lasseRigidity}, any $f\in \CB$ is conjugate to the restriction of any disjoint type map $\lambda f$ on $J_R(\lambda f)$ for large $R>0$. Then, we show that for any such $R$, the Cantor bouquet $J(\lambda f)$ can be projected  in a continuous fashion to a strongly absorbing Cantor bouquet contained in $J_R(\lambda f)$. This is achieved exploiting the fact that the topological structure of Cantor bouquets only allows for their curves to ``wind" forwards and backwards in a controlled way; see Figure \ref{figure: proj_cantor}. More precisely, we obtain the following.
\begin{thm}[Continuous projection on absorbing bouquets]\label{thm_intro_pi} Suppose that $f\in \B$ is of disjoint type, and that $J(f)$ is a Cantor bouquet. Then, for each $R>0$, there is a continuous surjective function $\pi_R\colon J(f) \to X$, where $X\subset J_R(f)$ is a strongly absorbing Cantor bouquet. 
\end{thm}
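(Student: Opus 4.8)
The plan is to build $\pi_R$ by "sliding" each point of $J(f)$ along the hair it lies on until its orbit has escaped past radius $R$, and then to verify that this sliding map is continuous and that its image is a strongly absorbing Cantor bouquet. First I would recall the structure theory of straight brushes: since $J(f)$ is ambiently homeomorphic to a straight brush $B$, every point of $J(f)$ lies on a unique maximal arc — a \emph{hair} — which is either a point (an isolated endpoint) or a closed arc parametrized by $[0,\infty)$ with the endpoint at parameter $0$ and tending to $\infty$ as the parameter $\to\infty$. Because $f$ is of disjoint type, $J(f)=I(f)\cup\{\text{endpoints}\}$ and $f$ permutes the hairs, mapping each hair onto another hair monotonically with respect to a natural "height" order once we are far enough out; this is exactly the controlled winding phenomenon alluded to in the excerpt and illustrated in Figure \ref{figure: proj_cantor}. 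For each $z\in J(f)$ lying on a hair $h$, with arclength-type parameter $t(z)$, I would define $\pi_R(z)$ to be the point on $h$ with the \emph{smallest} parameter $s\geq t(z)$ such that $|f^n(w)|\geq R$ for all $n\geq 0$, where $w$ is the point of $h$ at parameter $s$; the key point is that the set of such "good" parameters on $h$ is a closed sub-ray $[s_0(h),\infty)$ of $h$ (monotonicity far out, plus the fact that points sufficiently far along any hair escape fast and stay large), so $\pi_R(z)$ is well defined: it equals $z$ if $z$ is already in $J_R(f)$, and otherwise it is the endpoint at parameter $s_0(h)$ of the "$R$-good" part of $h$.

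Next I would check the three required properties. \emph{Image lands in $J_R(f)$:} immediate from the definition, since every point in the image satisfies $|f^n(\cdot)|\geq R$ for all $n\geq 0$, hence for all $n\geq 1$. \emph{Forward invariance and absorption:} if $z\in J_R(f)$ then all of $f(z)$'s orbit is $\geq R$, so $\pi_R(z)=z$, giving $J_R(f)\subset X$ and, restricting, $X=\pi_R(J(f))$ satisfies $f(X)\subset J_R(f)\subset X$ provided $X\subset J_R(f)$, which we just showed — so $X$ is strongly absorbing once we know it is a Cantor bouquet. \emph{$X$ is a Cantor bouquet:} here I would argue that $\pi_R$ collapses each hair $h$ of $J(f)$ onto its $R$-good sub-ray, i.e. it is the "identity on the tail, constant to the new endpoint on the initial segment" map hair-by-hair; this is precisely the model map which, performed simultaneously and compatibly across all hairs of a straight brush, yields again a straight brush (the set of new endpoints is still a Cantor set in the appropriate sense, no hair is deleted, and the ambient structure is preserved). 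Making this rigorous means exhibiting an ambient homeomorphism of $\C$ carrying $J(f)$ to a straight brush under which $X$ corresponds to another straight brush — or, more economically, verifying the Aarts–Oversteegen axioms (Definition \ref{def_brush}) directly for $X$ using the corresponding axioms for $J(f)$ and the fact that $s_0(h)$ varies upper-semicontinuously in $h$.

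The continuity of $\pi_R$ is the crux. The subtlety is that hairs can accumulate on one another wildly in $\C$, so a small perturbation of $z$ can jump to a hair $h'$ whose $R$-good sub-ray starts at a parameter $s_0(h')$ much larger than $s_0(h)$ — a priori this could move $\pi_R(z)$ a large Euclidean distance. The main obstacle is therefore to show that the function $z\mapsto s_0(\text{hair of }z)$, equivalently the new endpoint map, cannot jump \emph{upward} in the limit: I would prove that if $z_k\to z$ in $J(f)$ then $\limsup_k s_0(h_{z_k})\leq s_0(h_z)$, using the expansion/escape estimates for disjoint-type maps (points whose orbit dips below $R$ do so "robustly", so the bad part of a hair is an open condition propagated to nearby hairs) together with the defining properties of a straight brush guaranteeing a hair through every accumulated point. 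Combined with the reverse inequality $\liminf_k s_0(h_{z_k})\geq s_0(h_z)$ coming from the closedness of the escaping/large-orbit condition, this pins down continuity of the endpoint map, and then continuity of $\pi_R$ follows because $\pi_R(z)=\max\{z,\,\text{new endpoint of }h_z\}$ in the hair parametrization and both arguments of the max depend continuously on $z$ (the parametrizations of hairs fit together continuously in a straight brush). Surjectivity onto $X$ is built in by defining $X\defeq \pi_R(J(f))$. I expect the escape-rate control for disjoint-type maps — specifically that once $|f^n(z)|$ is large it grows, so "$R$-goodness" of a hair-tail is stable under perturbation and under passing to limit hairs — to be the technical heart, and it is exactly where the hypothesis that $f$ is of disjoint type (so that $P(f)$ is compactly contained in the Fatou set and standard hyperbolic-type expansion estimates apply on $J(f)$) is used.
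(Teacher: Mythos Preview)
Your definition of $\pi_R$ has a genuine gap at the very first step. You assert that on each hair $h$ the set of ``good'' parameters $\{s:\ |f^n(h(s))|\geq R\text{ for all }n\geq 0\}$ is a closed sub-ray $[s_0(h),\infty)$, but nothing you have said rules out the hair (or any of its images) crossing $\partial\D_R$ several times. If, say, $\eta$ exits $\D_R$ at parameter $1$, re-enters at $2$, and leaves for good at $3$, while every $f^n(\eta)$ with $n\geq 1$ stays outside $\D_R$, then the good set is $[1,2]\cup[3,\infty)$ and your map is already discontinuous \emph{along a single hair}: the point at parameter $2$ is fixed, while points just beyond it are sent to parameter $3$. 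Your parenthetical ``monotonicity far out'' does not help, because the problem occurs at bounded parameters, and disjoint type by itself gives no monotone relation between position on a hair and modulus.

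The paper's proof (Theorem~\ref{thm_pi_cont}) removes exactly this obstacle by replacing $\D_R$ with a larger bounded domain $S_R\Supset\D_R$ whose boundary each hair meets at most once (Proposition~\ref{prop_cantorbouquet}); one obtains $S_R$ by pulling back a square $(-Q,Q)^2$ with $Q\in\Q$ through the ambient homeomorphism to a straight brush. With $S_R$ in place of $\D_R$, the good set on every hair \emph{is} a sub-ray, and the paper then defines $\pi_R$ by your recipe. Continuity across hairs is still delicate --- your semicontinuity sketch does not go through, because upper semicontinuity of $s_0$ fails at hairs whose new endpoint has some iterate exactly on $\partial S_R$ --- so the paper instead proves continuity of the truncated maps $\pi_n$ (using only the first $n$ iterates) by an explicit local pullback argument (Claim~\ref{claim2}), and then shows $\pi_n\to\pi_R$ \emph{uniformly} using the hyperbolic expansion available for disjoint type maps. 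That uniform convergence, rather than a direct semicontinuity argument, is what delivers continuity of $\pi_R$.
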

\noindent More details on the definition and properties of the maps $\pi_R$ are provided in Theorem ~\ref{thm_pi_cont}. 

Finally, we note that the results in this paper are used in \cite{mio_splitting, mio_cosine}, where a total description of the Julia set of certain maps in $\CB$ with escaping critical values  introduced in \cite{mio_orbifolds}, is provided. For a unified account of the results, see \cite{mio_thesis}.

\begin{structure}
In \S\ref{sec_log} we recall the \textit{logarithmic change of variables} and prove some basic results we shall require. In \S\ref{sec_Cantor} we study general properties of functions whose Julia set is a {Cantor bouquet}, and prove Theorem \ref{thm_intro_pi}. Finally, \S\ref{sec_CB} gathers our results regarding the class $\CB$, including that this class is closed under iteration, as well as the proofs of Theorems \ref{thm_intro_rays} and \ref{thm_intro_CB}.
\end{structure}

\subsection*{Basic notation}
As introduced throughout this section, the Fatou, Julia and escaping set of an entire function $f$ are denoted by $F(f)$, $J(f)$ and $I(f)$ respectively. The singular set of $f$ is $S(f)$. We denote the complex plane by $\C$, Riemann sphere by $\widehat{\C}$, and $\C^\ast\defeq \C\setminus \{0\}$. A disc of radius $\epsilon$ centred at a point $p$ will be $B_\epsilon(p)$, and $\D_\epsilon$ when $p=0$. We indicate the closure of a domain $U$ by $\overline{U}$, that must be understood to be taken in $\C$. The annulus with radii $a,b \in \R^+$, $a<b$, and the vertical strip between $x=a$ and $x=b$ are respectively denoted by $A(a,b)\defeq\lbrace w\in \C : a< \vert w \vert < b \rbrace$ and $V(a,b)\defeq\lbrace w\in \C : a< \Rea(w) < b \rbrace$.
\subsection*{Acknowledgements}
I am very grateful to my supervisors Lasse Rempe and Dave Sixsmith for their continuous help and advice. I also thank Vasiliki Evdoridou, Daniel Meyer and Phil Rippon for valuable comments, as well as the referee for much helpful and detailed feedback.

\section{Preliminaries: logarithmic coordinates}\label{sec_log}
A commonly used tool for studying functions in the Eremenko-Lyubich class $\B$, is the \emph{logarithmic change of coordinates}, a technique firstly used in this context in \cite[Section ~2]{eremenkoclassB}; see also \cite[Section 5]{Dave_survey}, \cite[Section 2]{RRRS} or \cite[Section 3]{lasse_arclike}. 

\begin{discussion}[Logarithmic transform]\label{dis_def_transform}
For $f\in \B$, fix a Euclidean disk $\D_L\Supset S(f)$ and define the \textit{tracts} $\T_f$ of $f$ as the connected components of $f^{-1}(\C\setminus \overline{\D}_L)$. Let $\H_{\log L}\defeq \exp^{-1}(\C\setminus \overline{\D}_L)$ be the right half-plane containing all points with real part greater than $\log L$, and let $\T_F\defeq \exp^{-1}(f^{-1}(\C\setminus \overline{\D}_L))$. 
Note that each connected component $T$ of $\T_F$ is a simply connected domain whose boundary is homeomorphic to $\R$. Moreover, by the action of the exponential map, both ``ends'' of the boundaries of $T$ have real parts converging to $+\infty$, and both $\T_F$ and $\H_{\log L}$ are invariant under translation by $2\pi i$. Consequently, we can lift $f$ to a map $F: \T_F\to \H_{\log L}$ satisfying 
\begin{equation}\label{eq_commute_log}
\exp\circ F = f\circ\exp,
\end{equation}
and such that $F$ is $2\pi i$-periodic. We call $F$ a \emph{logarithmic transform} of $f$. Moreover, we call each connected component of $\T_F$ a \emph{logarithmic tract} of $F$.
\end{discussion}

\noindent By construction, the following facts, that will be useful for us, also hold:
\begin{discussion}[Properties of logarithmic transforms]\label{dis_prop_transform} Following \ref{dis_def_transform}, we have:
\begin{enumerate}[label=(\arabic*)]
\item \label{item_a_transform}Each tract $T \in \T_F$ is an unbounded Jordan domain that is disjoint from all its $2\pi i\Z$-translates. The restriction $F\vert_{T} \colon T\to \H_{\log L}$ is a conformal isomorphism whose continuous extension to the closure of $T$ in $\widehat{\C}$ satisfies $F(\infty)=\infty$. We denote the inverse of $F\vert_T$ by $F_T^{-1}$. 
\item \label{item_b_transform} The components of $\T_F$ have pairwise disjoint closures and accumulate only at $\infty$; i.e., if $\{z_n\}_n\subset \T_F$ is a sequence of points, all belonging to different components of $\T_F$, then $z_n\rightarrow\infty$ as $n\rightarrow \infty$.
\end{enumerate}
\end{discussion}

By Carathéodory-Torhorst's Theorem, \cite[Theorem 2.1]{pommerenke_boundary}, for each $T\in \T_F$, the function $F\vert_T$ in \ref{dis_prop_transform}\ref{item_a_transform} can be continuously extended to the boundary of $T$. In addition, since $T$ is a Jordan domain, this extension is a homeomorphism, and in particular, $F\vert_T$ extends continuously to a homeomorphism between the closures $\overline{T}$ and 
$\overline{\H}_{\log L}$ (taken in $\mathbb{C}$). Together with property \ref{dis_prop_transform}\ref{item_b_transform}, this implies that $F$ extends continuously to the 
closure $\overline{\mathcal{T}}_{\!\!F}$ of $\mathcal{T}_F$ in $\mathbb{C}$. We then denote
 \[ J(F) \defeq \left\{z\in\overline{\mathcal{T}}_{\!\!F}: F^j(z)\in \overline{\mathcal{T}}_{\!\!F}\text{ for all $j\geq 0$} \right\}. \] 
 
We say that a logarithmic transform $F$ is of \emph{disjoint type} if the boundaries of the tracts of $F$ do not intersect the boundary of $\H_{\log L}$; i.e., if $\overline{\mathcal{T}}_{\!\!F}\subset \H_{\log L}$. In particular, in such case $J(F) =\bigcap _{n\geq 0} F^{-n}(\overline{\mathcal{T}}_{\!\!F}).$

We shall use the following facts about disjoint type maps.
\begin{prop} \label{prop_disjoint}Let $f\in \B$. The following hold:
\begin{itemize}
\item The function $f$ is of disjoint type if and only if there exists a Jordan domain $D \supset S(f)$ such that $\overline{f(D)} \subset \overline{D}$.
\item If $f$ is of disjoint type and $F$ is a logarithmic transform of $f$, then $F$ is of disjoint type and \begin{equation}\label{eq_Juliadisjoint}
\exp(J(F))=J(f).
\end{equation}
\end{itemize}
\end{prop}
\begin{proof}
A proof of the first statement appears in \cite[Proposition 2.8]{helenaSemi}. The second statement follows from the first one; see \cite[Proposition~3.2 and~3.3]{lasse_arclike}.
\end{proof}

We note that \eqref{eq_Juliadisjoint} does not hold for all $f\in \B$. However, using a standard expansion estimate for logarithmic transforms derived from Koebe’s $\frac{1}{4}$-theorem, see \cite[Lemma~ 1]{eremenkoclassB}, for $F\colon \T_F\rightarrow \H_{\log L}$ and all $z \in \T_F$,
\begin{equation}\label{eq_normalized}
\vert F'(z)\vert \geq \frac{1}{4\pi} \left(\Rea F(z) -\log L \right).
\end{equation}
Note that in particular, if $z\in \H_{\log L+8\pi}$, \eqref{eq_normalized} implies that $\vert F'(z)\vert>2$. This allows us to see that a partial inclusion still holds in some cases: 
\begin{observation}\label{rem_juliaLog} Let $F\colon \T_F\rightarrow \H_{\log L}$ be a logarithmic transform for some $f\in \B$. If $X \subset J(F)\cap \H_{\log L+8\pi}$ and $F(X)\subset X$, then $\exp(X) \subset J(f)$.
\end{observation}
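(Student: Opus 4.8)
The plan is to fix $z\in X$, set $w=\exp(z)$, show that $f$ expands exponentially along the forward orbit of $w$, and deduce $w\in J(f)$; since $z\in X$ is arbitrary this gives $\exp(X)\subseteq J(f)$.

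First I would extract the structural input from the hypotheses $F(X)\subseteq X\subseteq\H_{\log L+8\pi}$. Forward invariance gives $F^n(z)\in X\subseteq J(F)\subseteq\overline{\T}_F$ with $\Rea F^n(z)>\log L+8\pi$ for every $n\ge 0$. I would then upgrade this to $F^n(z)\in\T_F$ (i.e.\ to the open tracts): a point of $\partial T$ for a tract $T$ is sent by the continuous boundary extension of $F|_T$ (see property \ref{dis_prop_transform}\ref{item_a_transform} and the paragraph following it) onto $\partial\H_{\log L}=\{\Rea=\log L\}$, which is incompatible with $F^{n+1}(z)\in X\subseteq\H_{\log L+8\pi}$. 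Hence $F$ is holomorphic at every $F^n(z)$, estimate \eqref{eq_normalized} applies there, and iterating \eqref{eq_commute_log} along this orbit yields $f^n(w)=\exp(F^n(z))$ for all $n$; in particular $|f^n(w)|=e^{\Rea F^n(z)}>Le^{8\pi}$ for all $n$.

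Differentiating $f^n\circ\exp=\exp\circ F^n$ at $z$ then gives
\[(f^n)'(w)=\frac{f^n(w)}{w}\,(F^n)'(z).\]
By \eqref{eq_normalized} together with $\Rea F^{k+1}(z)>\log L+8\pi$ we have $|F'(F^k(z))|>2$ for each $k$, so $|(F^n)'(z)|=\prod_{k=0}^{n-1}|F'(F^k(z))|>2^n$; combined with the uniform lower bound on $|f^n(w)|$ this shows $|(f^n)'(w)|>\tfrac{Le^{8\pi}}{|w|}\,2^n\to\infty$. Finally, if $w$ lay in a Fatou component $U$, then $\{f^n|_U\}$ would be normal; a locally uniform limit of a subsequence cannot be $\equiv\infty$ (that would put $U\subseteq I(f)\subseteq J(f)$ by the Eremenko--Lyubich inclusion for $f\in\B$, \cite{eremenkoclassB}), so it would be a holomorphic map $U\to\C$, forcing $(f^{n_j})'(w)$ to converge in $\C$ --- contradicting $|(f^n)'(w)|\to\infty$. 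Thus $w\in J(f)$.

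The argument is short, and the only point requiring genuine care is the bookkeeping in the second paragraph ensuring that the entire $F$-orbit of $z$ stays inside the \emph{open} tracts, so that the chain rule and \eqref{eq_normalized} are legitimate at every step; everything else is the standard ``expansion obstructs normality'' mechanism. It is worth noting that one cannot strengthen the conclusion to $\exp(X)\subseteq I(f)$: the set $X$ could be a single repelling periodic point of $F$, whose $\exp$-image is a repelling periodic point of $f$ lying in $J(f)\setminus I(f)$ --- which is exactly why the last step is routed through Fatou components rather than through escape.
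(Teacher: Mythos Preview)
Your argument is correct and follows the same route as the paper: show $|(f^n)'(w)|\to\infty$ via the expansion estimate \eqref{eq_normalized}, then contradict normality on the Fatou set using that $w\notin I(f)$ (Eremenko--Lyubich). The paper packages the final step as an appeal to Marty's theorem, while you unwind it by hand via Weierstrass convergence of derivatives; these are equivalent.

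One phrasing to tighten: the sentence ``a locally uniform limit of a subsequence cannot be $\equiv\infty$'' is literally false --- some subsequence of $\{f^n|_U\}$ might well tend to $\infty$ without forcing $U\subseteq I(f)$. What you actually need (and what your parenthetical makes clear you intend) is that \emph{not every} subsequential limit is $\infty$: since $w\in F(f)$ gives $w\notin I(f)$, there is a subsequence along which $|f^{n_j}(w)|$ stays bounded, and passing to a further subsequence yields a holomorphic limit whose derivative at $w$ is finite, contradicting $|(f^{n_j})'(w)|\to\infty$. With that quantifier fixed, the proof is complete. Your extra paragraph verifying that the $F$-orbit stays in the \emph{open} tracts (so that $F'$ and \eqref{eq_normalized} are available at every step) is a point the paper leaves implicit, and it is good that you made it explicit.
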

\begin{proof}
For the sake of contradiction, let us suppose that there exists $w\in \exp(X)\cap F(f)$. In particular, $w=\exp(z)$ for some $z\in X$, and by \eqref{eq_normalized} together with the assumptions on $X$, $\vert (F^n)'(z)\vert \geq 2$ for all $n\geq 1$. By \cite{eremenkoclassB}, $I(f) \subset J(f)$, and so $w\notin I(f)$. Since by \eqref{eq_commute_log}, $\exp(F(z)) = f(\exp(z))$, it holds that 
$$\vert f'(w)\vert=\frac{\vert F'(z)\vert \cdot\vert f(w)\vert} { \vert w \vert} \geq 2 \frac{\vert f(w)\vert} {\vert w \vert}.$$
Consequently, by this and using the chain rule, for any $n\geq 0$, $$\vert (f^n)'(w) \vert \geq 2^n \vert f^n(w) \vert / \vert w \vert \geq 2^n L / \vert w \vert.$$
Thus, $\vert (f^n)'(w) \vert\rightarrow \infty $ as $n\rightarrow \infty$, which by Marty's theorem \cite[\S3.3]{Schiff_normal} contradicts that $w\in F(f)\setminus I(f)$. 
\end{proof}

Thanks to \ref{dis_prop_transform}\ref{item_a_transform}, we can define symbolic dynamics for logarithmic transforms. 
\begin{defn}[External addresses for logarithmic transforms] \label{def_extaddrLog}
Let $F$ be a logarithmic transform. An \emph{(infinite) external address} is a sequence $\s= T_0 T_1 T_2 \dots$ of logarithmic tracts of $F$. We denote
\[J_{\s}(F)\defeq \{z\in J(F): F^n(z)\in \overline{T}_n \text{ for all $n\geq 0$}\}. \]
Moreover, $\Addr(J(F))$ is the set of external addresses $\s$ such that $J_\s(F) \neq \emptyset$.
\end{defn}

As a consequence of \eqref{eq_normalized}, the following expansion result holds for points with the same external address:
\begin{prop}[Expansion along orbits]\label{prop_contraction} Let $F: \mathcal{T} \rightarrow \H_{\log L}$ be a logarithmic transform of some $f\in \B$ such that $\overline{\mathcal{T}}_{\!\!F}\subset \H_{\log L+8\pi}$. For each $n\geq 0$ and $\s \in \Addr(J(F))$, if $z,w\in J_{\s}(F)$, then
\begin{equation}\label{eq_contraction}
\vert z -w\vert \leq \frac{1}{2^n} \vert F^n(z)-F^n(w)\vert.
\end{equation} 
\end{prop}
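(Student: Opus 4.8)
The plan is to prove the inequality by induction on $n$, with the base case $n=0$ being trivial since $\vert z-w\vert \le \vert z-w\vert$. For the inductive step, it suffices to establish the single-step contraction: if $z,w \in J_{\s}(F)$ with first tract $T_0$, then $\vert z - w\vert \le \tfrac12 \vert F(z) - F(w)\vert$; applying this with the shifted address $\sigma(\s) = T_1 T_2 \dots$ to the points $F(z), F(w) \in J_{\sigma(\s)}(F)$ and chaining gives the general case. So the real content is the one-step estimate.

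**The one-step estimate.** Since $z,w \in J_{\s}(F)$, both lie in $\overline{T_0}$, and $F\vert_{T_0}\colon T_0 \to \H_{\log L}$ is a conformal isomorphism extending to a homeomorphism of closures by \ref{dis_prop_transform}\ref{item_a_transform}. Write $g = F_{T_0}^{-1}\colon \overline{\H}_{\log L} \to \overline{T_0}$ for the inverse branch, so that $z = g(F(z))$ and $w = g(F(w))$. The idea is to bound $\vert z - w\vert = \vert g(F(z)) - g(F(w))\vert$ by integrating $g'$ along a path from $F(w)$ to $F(z)$; the natural path is the straight segment $[F(w), F(z)]$, which lies in $\H_{\log L}$ since that half-plane is convex. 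By \eqref{eq_normalized}, for any $\zeta \in T_0$ we have $\vert F'(\zeta)\vert \ge \tfrac{1}{4\pi}(\Rea F(\zeta) - \log L)$, and since $\overline{\T}_F \subset \H_{\log L + 8\pi}$ forces $\Rea F(\zeta) > \log L + 8\pi$ — wait, more carefully: the hypothesis $\overline{\T}_F \subset \H_{\log L+8\pi}$ says the \emph{tracts} lie far to the right, which by the remark following \eqref{eq_normalized} gives $\vert F'(\zeta)\vert > 2$ for all $\zeta \in \T_F$. Hence for every $\xi$ in the image half-plane (in particular on the segment), $\vert g'(\xi)\vert = 1/\vert F'(g(\xi))\vert < 1/2$. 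Therefore
\begin{equation*}
\vert z - w\vert = \left\vert \int_{[F(w),F(z)]} g'(\xi)\, d\xi \right\vert \le \frac{1}{2}\,\vert F(z) - F(w)\vert,
\end{equation*}
which is exactly the claim. Then induction closes the argument.

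**Main obstacle.** The only subtlety is justifying the contour-integral estimate cleanly when $z$ or $w$ lies on $\partial T_0$ rather than in the open tract: $g$ is defined and continuous on $\overline{\H}_{\log L}$ but is only holomorphic in the interior, so one should either note that $F(z), F(w)$ lie in the open half-plane $\H_{\log L}$ (because $z, w \in \overline{\T}_F \subset \H_{\log L+8\pi}$ forces their images under $F$ to have real part $> \log L$ — indeed $F$ maps into $\H_{\log L}$ and one needs the images to be interior points, which follows since $F(z) = \exp^{-1}\!$-type image and $z$ being in a tract that maps onto $\H_{\log L}$ means $F(z) \in \H_{\log L}$ unless $z \in \partial T_0$; points of $\partial T_0$ map to $\partial \H_{\log L}$), and then the open segment $(F(w),F(z))$ lies in $\H_{\log L}$ where $g$ is holomorphic, so the standard estimate $\vert g(b)-g(a)\vert \le \sup\vert g'\vert \cdot \vert b-a\vert$ applies by continuity up to the endpoints. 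Alternatively one invokes that a holomorphic map with $\vert g'\vert \le 1/2$ throughout a convex set is $\tfrac12$-Lipschitz there, extended to the closure by continuity. This is routine, so the proposition is essentially immediate from \eqref{eq_normalized} and the convexity of the half-plane.
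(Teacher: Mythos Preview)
Your proof is correct and follows essentially the same route as the paper: use \eqref{eq_normalized} together with the hypothesis $\overline{\T}_F\subset\H_{\log L+8\pi}$ to get $\vert F'\vert\ge 2$ on the tracts, integrate the inverse branch along the straight segment $[F(w),F(z)]$ in the convex half-plane to obtain the one-step bound $\vert z-w\vert\le\tfrac12\vert F(z)-F(w)\vert$, and conclude by induction. Your boundary-point worry in the last paragraph is in fact moot here: since $z,w\in J_{\s}(F)$ forces $F(z),F(w)\in\overline{T_1}\subset\overline{\T}_F\subset\H_{\log L+8\pi}$, the images lie in the open half-plane $\H_{\log L}$, so the segment and the derivative bound need no extension-to-closure argument.
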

\begin{proof}
By the assumption $\overline{\mathcal{T}}_{\!\!F}\subset \H_{\log L+8\pi}$, using \eqref{eq_normalized}, it holds that $\vert F'(z)\vert \geq 2$ for all $z\in \T_F$. For any $T\in \T_F$, let $F_T^{-1}: \H_{\log L} \rightarrow T$ be the inverse branch of $F$ onto the tract $T$. Then,
\begin{equation}\label{eq_inverse_half}
\vert (F_T^{-1})^{'}(v)\vert\leq \frac{1}{2}\quad \text{ for all } \quad v\in \H_{\log L}.
\end{equation}
Moreover, $F$ is by definition of disjoint type, and hence, 
\begin{equation}\label{eq_JFcap}
J(F)=\bigcap _{n\geq 0} F^{-n}(\overline{\mathcal{T}}_{\!\!F}) \subset \T_F\subset \H_{\log L +8\pi}.
\end{equation}
Let $\s=T_0T_1\ldots $ and choose any pair of points $w,z \in J_\s (F)$. Let $\gamma$ be the straight line joining $F(z)$ and $F(w)$. Since $\H_{\log L}$ is a right half-plane, and hence a convex set, $\gamma \subset \H_{\log L}$. In particular, \eqref{eq_inverse_half} holds for all points in $\gamma$. Moreover, since $F_T^{-1}$ is a conformal isomorphism to its image, the curve $F_T^{-1}(\gamma)$ joins $z$ and $w$. Consequently, 
\begin{equation*}
\vert z -w\vert\leq \ell_{eucl}(F_T^{-1}(\gamma))\!=\!\!\int\!\! \left\vert (F_T^{-1})'(\gamma(t))\right\vert\!\left\vert\gamma'(t)\right\vert \!dt \leq \frac{1}{2} \!\int\! \!\vert \gamma'(t)\vert dt= \!\frac{1}{2} \vert F(z)-F(w)\vert.
\end{equation*}
Since $z,w\in J_\s(F)$, for each $k\geq 0$, the points $F^k(z),F^k(w)$ belong to the same tract, and by \eqref{eq_JFcap}, $F^{k+1}(z),F^{k+1}(w)\subset \H_{\log L}$. Hence, we can iteratively apply the same reasoning as before, and \eqref{eq_contraction} follows by induction.
\end{proof}

\section{Cantor bouquets and Julia sets}\label{sec_Cantor}
In this section, we study entire maps whose Julia set is a Cantor bouquet, and prove a more general version of Theorem \ref{thm_intro_pi}. In addition, Proposition~\ref{prop_Ts} gathers general properties of entire functions with Cantor bouquet Julia sets. Recall that we defined Cantor bouquets in Definition \ref{def_brush}. As notes to this definition, we remark the following.

\begin{remark}
Any two straight brushes are ambiently homeomorphic, see \cite{AartsOversteegen}, which in a broad sense 
means that the homeomorphism preserves the ``vertical'' order of the hairs in the brushes. We note that in some texts, the term \emph{(Devaney) hair} has been used to denote what we call \textit{dynamic ray}, see the definition below. However, hairs of Cantor bouquets are not necessarily dynamic rays; see Proposition \ref{prop_Ts}.
\end{remark}

\begin{defn}[Dynamic rays for transcendental maps {\cite[Definition 2.2]{RRRS}}]\label{def_ray}
Let $f$ be a transcendental entire function. A \emph{ray tail} of $f$ is an injective curve $\gamma :[t_0,\infty)\rightarrow I(f)$, with $t_0>0$, such that
\begin{itemize}
\item for each $n\geq 1$, $t \mapsto f^{n}(\gamma(t))$ is injective with $\lim_{t \rightarrow \infty} f^{n}(\gamma(t))=\infty$. 
\item $f^{n}(\gamma(t))\rightarrow \infty$ uniformly in $t$ as $n\rightarrow \infty$.
\end{itemize}
A \emph{dynamic ray} of $f$ is a maximal injective curve $\gamma :(0,\infty)\rightarrow I(f)$ such that the restriction $\gamma_{|[t,\infty)}$ is a ray tail for all $t > 0$. We say that $\gamma$ \emph{lands} at $z$ if $\lim_{t \rightarrow 0^+} \gamma(t)=z$, and we call $z$ the \emph{endpoint} of $\gamma$.
\end{defn}
\begin{remark} With this terminology, an entire function $f$ is criniferous if every $z\in I(f)$ is eventually mapped to a ray tail. That is, for every $z \in I(f)$, there exists a natural number $N \defeq N(z)$ such that $f^n(z)$ belongs to a ray tail for all $n\geq N$.
\end{remark}

A rather simple yet interesting property of Cantor bouquets, which will play a crucial role in our future arguments, is the following: 
\begin{prop}[Jordan curves that hairs intersect at most once]\label{prop_cantorbouquet} Given a Cantor bouquet $X$, for each $R>0$, there exists a bounded simply connected domain $S_R \Supset \D_R $ such that each hair of $X$ intersects $\partial S_R$ at most once.
\end{prop}
\begin{proof}
Let $B$ be a straight brush and let $\psi: \R^2\rightarrow \R^2$ be the ambient homeomorphism in the definition of Cantor bouquet such that $\psi(X)=B$. Fix any $R\geq 0$. Then, since $\psi(\D_R)$ is a bounded set, as $\psi(\overline{\D}_R)$ is the image of a compact set under a continuous function, $\psi(\D_R) \Subset \{(x,y) \in \R^2 : \vert x \vert< Q \text{ and } \vert y \vert< Q\} \eqdef (-Q, Q)^2$ for some $Q\in \Q$; see Figure ~\ref{figure: proj_cantor}. By the choice of $Q$ being a rational number, each hair of the brush $B$ intersects the boundary of $(-Q, Q)^2$ in at most one point. Defining $S_R\defeq \psi^{-1}( (-Q, Q)^2),$ the proposition follows.
\end{proof}

\begin{figure}[htb]
\centering
\begingroup%
  \makeatletter%
  \providecommand\color[2][]{%
    \errmessage{(Inkscape) Color is used for the text in Inkscape, but the package 'color.sty' is not loaded}%
    \renewcommand\color[2][]{}%
  }%
  \providecommand\transparent[1]{%
    \errmessage{(Inkscape) Transparency is used (non-zero) for the text in Inkscape, but the package 'transparent.sty' is not loaded}%
    \renewcommand\transparent[1]{}%
  }%
  \providecommand\rotatebox[2]{#2}%
  \newcommand*\fsize{\dimexpr\f@size pt\relax}%
  \newcommand*\lineheight[1]{\fontsize{\fsize}{#1\fsize}\selectfont}%
  \ifx\svgwidth\undefined%
    \setlength{\unitlength}{354.33070866bp}%
    \ifx\svgscale\undefined%
      \relax%
    \else%
      \setlength{\unitlength}{\unitlength * \real{\svgscale}}%
    \fi%
  \else%
    \setlength{\unitlength}{\svgwidth}%
  \fi%
  \global\let\svgwidth\undefined%
  \global\let\svgscale\undefined%
  \makeatother%
  \begin{picture}(1,0.52)%
    \lineheight{1}%
    \setlength\tabcolsep{0pt}%
    \put(0,0){\includegraphics[width=\unitlength,page=1]{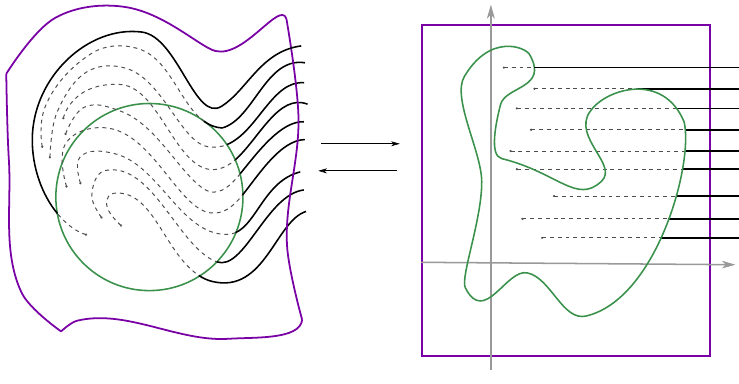}}%
    \put(0.95816779,0.49514881){\color[rgb]{0.47058824,0,0.64705882}\makebox(0,0)[lt]{\lineheight{1.25}\smash{\begin{tabular}[t]{l}$Q$\end{tabular}}}}%
    \put(0.96119159,0.02343454){\color[rgb]{0.47058824,0,0.64705882}\makebox(0,0)[lt]{\lineheight{1.25}\smash{\begin{tabular}[t]{l}$-Q$\end{tabular}}}}%
    \put(0.46302536,0.25702383){\color[rgb]{0,0,0}\makebox(0,0)[lt]{\lineheight{1.25}\smash{\begin{tabular}[t]{l}$\psi^{-1}$\end{tabular}}}}%
    \put(0.47285281,0.33564287){\color[rgb]{0,0,0}\makebox(0,0)[lt]{\lineheight{1.25}\smash{\begin{tabular}[t]{l}$\psi$\end{tabular}}}}%
    \put(0.25324403,0.10507739){\color[rgb]{0.22745098,0.56862745,0.29411765}\makebox(0,0)[lt]{\lineheight{1.25}\smash{\begin{tabular}[t]{l}$\mathbb{D}_R$\end{tabular}}}}%
    \put(0.39611902,0.492125){\color[rgb]{0.47058824,0,0.65098039}\makebox(0,0)[lt]{\lineheight{1.25}\smash{\begin{tabular}[t]{l}$S_R$\end{tabular}}}}%
  \end{picture}%
\endgroup%
\caption{On the left, hairs of a Cantor bouquet intersecting a circle $\partial \D_R$, some of them multiple times. For each hair, dashes represent points with lower potential than that of the last point that intersects $\partial \D_R$. On the right, the image of the hairs to a straight brush under an ambient homeomorphism $\psi$. $[-Q,Q]^2$ is a square whose boundary the hairs intersect at most once, and $S_R\defeq \psi^{-1}((-Q,Q)^2)$.}
\label{figure: proj_cantor}
\end{figure} 
We shall now see that without further assumptions, for an entire function $f$, having a Cantor bouquet Julia set has interesting dynamical implications. Recall that an asymptotic value $a$ of $f$ is \textit{logarithmic} if there exists a neighbourhood $U$ of $a$ and a connected component V of $f^{-1}(U)$ such that $f \colon V \rightarrow U \setminus \{a\}$ is a universal covering map. 

\begin{prop}[Properties of functions with Cantor bouquet Julia sets]\label{prop_Ts} Let $f$ be an entire function and suppose that $J(f)$ is a Cantor bouquet. Then, the following hold:
\begin{enumerate}[label=(\Alph*)]
\item \label{itemA_JCB} $J(f)$ contains neither critical values nor logarithmic asymptotic values. 
\item \label{itemB_JCB} If $\eta$ is a hair of $J(f)$, then $f(\eta)$ is a hair of $J(f)$.
\item \label{itemC_JCB} If $z\in J(f)\cap I(f)$ and $\gamma$ is the (piece of) hair in $J(f)$ joining $z$ to infinity, then $f^n\vert_{\gamma} \rightarrow \infty$ as $n\rightarrow \infty$ uniformly.
\end{enumerate}
If in addition $f$ is of disjoint type, then each hair of $J(f)$ is a dynamic ray together with its endpoint.
\end{prop}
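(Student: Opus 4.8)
The plan is to transfer all the topology to the straight-brush model through the ambient homeomorphism $\psi\colon J(f)\to B$, and to feed into that model the complete invariance of $J(f)$ and the local mapping behaviour of $f$. The only properties of a Cantor bouquet I will use are: distinct closed hairs are disjoint arcs and every connected subset of $J(f)$ lies in a single hair (since the hairs of a straight brush are the connected components after deleting endpoints, and re-attaching endpoints merges nothing); each hair is closed and unbounded; every point of $J(f)$ has arbitrarily small open neighbourhoods $U$ such that the connected component of the point in $J(f)\cap U$ is a sub-arc of its hair, and the point is interior to that sub-arc unless it is an endpoint --- so within such $U$ an endpoint is a non-cut-point and an interior hair point is a cut-point of its component; and, by Proposition~\ref{prop_cantorbouquet} (and the variant of its proof valid for any bounded set in place of $\mathbb{D}_R$), for all large $R$ each hair meets $\partial S_R$ at most once, so a hair entering a large disk never returns to it.

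For (A) I would first exclude critical values unconditionally. If $v\in J(f)$ is a critical value, complete invariance puts a critical point $c$ over it in $J(f)$, of some local degree $d\geq 2$; let $\eta,\xi$ be the hairs through $v,c$, pick a small Jordan domain $N\ni v$ met by each hair at most once, and let $N'\ni c$ be the component of $f^{-1}(N)$ with $f\colon N'\to N$ proper of degree $d$. Then $f^{-1}(\eta\cap N)\cap N'$ --- the preimage of an arc under a proper branched cover --- is connected and contains $c$, hence lies inside the single hair $\xi$; but in local coordinates at $c$ (in which $f$ is $w\mapsto w^{d}$) it is a star with $2d$ prongs when $v$ is interior to $\eta$ and with $d$ prongs when $v$ is an endpoint of $\eta$, whereas an arc has at most two (resp.\ one) local directions at a point (resp.\ endpoint). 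This forces $v$ to be an endpoint of $\eta$, $d=2$, and $c$ interior to $\xi$. To kill this case I would look along the sub-arc $\xi_-$ of $\xi$ from the endpoint $e_\xi$ to $c$: $f$ has only finitely many critical points on the compact arc $\xi_-$, and at each interior one $c''$ the critical value $f(c'')\in\eta\subset J(f)$ must, by what was just proved, be the endpoint $v$ of $\eta$; since a non-constant holomorphic map is non-constant on every arc, $f|_{\xi_-}$ has no interior local maximum, hence is monotone, so $p:=f(e_\xi)$ is an interior point of $\eta$. Then $f$ cannot be a local homeomorphism at $e_\xi$ (it would otherwise carry a neighbourhood of $e_\xi$ in $J(f)$ homeomorphically onto one of $p$, taking the non-cut-point $e_\xi$ to the cut-point $p$), so $e_\xi$ is itself a critical point with critical value $p\in J(f)$ interior to its hair --- which we have already excluded. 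Logarithmic asymptotic values $a\in J(f)$ I would treat in the same spirit: near $a$, $f$ is an $\exp$-like universal covering on the relevant preimage component $V$, so $J(f)\cap V\neq\emptyset$; a hair through a point of $J(f)\cap V$ first forces $V$ to be unbounded (otherwise infinitely many preimages of one point of $J(f)\cap V$ accumulate at a finite point of $\partial V$, impossible for a nonconstant entire map), and then contradicts Proposition~\ref{prop_cantorbouquet} together with the unboundedness of images of hairs from (B).

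For (B), if $\eta$ is a hair then $f(\eta)$ is connected, hence lies in a single hair $\eta'$; by (A) there are no critical points on $\eta$, so $f|_\eta$ is locally injective, hence (mapping into the arc $\eta'$) strictly monotone, hence a homeomorphism onto a sub-arc of $\eta'$. That this sub-arc is all of $\eta'$ needs two points: that $f$ sends the endpoint of $\eta$ to the endpoint of $\eta'$ (the alternative is ruled out by the cut-point argument above, $e_\eta$ being regular by (A)), and that $f(\eta)$ is unbounded, which holds because $f$ cannot stay bounded along a hair in $J(f)$ --- the image would otherwise tend to an asymptotic value of $f$ lying in $J(f)$, again contradicting (A). Part (C) then follows by iterating (B): $f^n$ maps the hair of $z$ homeomorphically onto a hair $\eta_n$, endpoint to endpoint, so it carries the tail $\gamma$ of $z$'s hair onto the $\infty$-side tail of $\eta_n$ through $f^{n}(z)$; once $n$ is large enough that $f^{n}(z)\notin\overline{S_R}$, the fact that $\eta_n$ meets $\partial S_R$ at most once forces that whole tail to lie outside $S_R\supset\mathbb{D}_R$, so $\min_\gamma|f^{n}|\geq R$; hence $f^{n}|_\gamma\to\infty$ uniformly.

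Finally, assume $f$ is of disjoint type. Passing to a logarithmic transform $F$ (again of disjoint type, with $\exp(J(F))=J(f)$ by \eqref{eq_Juliadisjoint}, normalised so that Proposition~\ref{prop_contraction} applies), the expansion estimates \eqref{eq_normalized} and \eqref{eq_contraction} --- applied to a point of a hair together with its endpoint, which lift to distinct points of a common $J_{\s}(F)$ --- show that every point of a hair $\eta$ other than its endpoint $e_\eta$ lies in $I(f)$. Hence, by (C), every tail of $\eta$ is a ray tail, so $\eta\setminus\{e_\eta\}$, parametrised by $(0,\infty)$, is a dynamic ray in the sense of Definition~\ref{def_ray}: it is maximal because a strictly larger connected curve in $I(f)$ would still lie in the single hair $\eta$ and, being a continuous injective image of $(0,\infty)$, could not equal the closed arc $\eta$; and it lands at $e_\eta$. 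The main obstacle throughout is establishing (A) in full --- especially the borderline degree-two critical-value case and the logarithmic-asymptotic-value case --- together with the closely related surjectivity step in (B) that forbids a hair from having bounded image.
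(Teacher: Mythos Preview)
Your strategy largely matches the paper's: local mapping analysis at critical points for (A), connectedness for (B), and Proposition~\ref{prop_cantorbouquet} for (C). The critical-value half of (A) is essentially the paper's argument, though your presentation is more roundabout; the paper simply takes the critical point $c\in\xi$ of \emph{minimal} potential, so that $(e_\xi,c)$ contains no critical points, whence $f|_{[e_\xi,c]}$ is monotone into $\eta$ while $f(e_\xi)=f(c)=v$ (the former because $e_\xi$, being an endpoint, cannot be critical by the degree analysis and so must map to a non-cut-point), an immediate contradiction. Your step ``no interior local maximum, hence is monotone'' is not literally valid as stated, but the intended conclusion follows by this cleaner route.

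There are two genuine gaps. First, your treatment of logarithmic asymptotic values is too vague to constitute a proof: you assert a contradiction ``with Proposition~\ref{prop_cantorbouquet} together with the unboundedness of images of hairs from (B)'' without saying what that contradiction is. The paper's argument here is one line and quite different: if $a\in J(f)$ is a logarithmic asymptotic value on a hair $\eta$, then a component of $f^{-1}(\eta)\subset J(f)$ passing through the logarithmic tract $V$ is a curve separating the plane --- but a Cantor bouquet has connected complement. Second, your surjectivity step in (B) reduces to an asymptotic value of $f$ lying in $J(f)$ and then invokes (A); but (A) only rules out \emph{logarithmic} asymptotic values (and critical values), so this does not close. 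The paper's proof of (B) is equally terse on this point; fortunately, only the weaker fact that $f|_\eta$ is a monotone injection into a hair $\eta'$ taking endpoint to endpoint is actually needed for (C) and for the final statement, and that much does follow from (A).
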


\begin{remark}
Suppose that $f$ is in the so-called \textit{Speiser class}, that is, $f$ is a transcendental entire function with at most finitely many singular values. Then, by Proposition \ref{prop_Ts}\ref{itemA_JCB}, $J(f)$ being a Cantor bouquet implies that $J(f)$ contains
no singular values; see \cite[Lemma~1.2]{goldberg_keen_86}.
\end{remark}
\begin{proof}[Proof of Proposition \ref{prop_Ts}]
We start proving \ref{itemA_JCB} by contradiction. Suppose that $J(f)$ contains a logarithmic asymptotic value $z$, which in particular is in a hair $\eta$. Then, by definition of logarithmic asymptotic value, $f^{-1}(\eta)$ contains a curve separating the plane, which contradicts $J(f)$ being a Cantor bouquet. Now, suppose that there exists a critical point $z\in J(f)$ and let $\eta$ be the hair that contains $z$. By continuity of $f$, $f(\eta)$ must be contained in a hair, say $\tilde{\eta}$. Then, since $f$ acts as the map $w \mapsto w^{\deg(f,z)}$ locally in a neighbourhood of $z$, $J(f)$ being a collection of hairs has the following implications: it can only occur that $\deg(f,z)=2$, $z$ cannot be the endpoint of $\eta$, and $f(z)$ must be the endpoint of $\tilde{\eta}$. For the same reason, if we denote by $e$ and $\tilde{e}$ the respective endpoints of $\eta$ and $\tilde{\eta}$, then $f(e)=\tilde{e}$ and $e$ cannot be a critical point. 

We might assume without loss of generality that $z$ is the critical point in $\eta$ with least potential, i.e., such that the restriction of $\eta$ between $e$ and $z$, that we denote by $\eta[e,z]$, does not contain any other critical point. We can make this assumption because the restriction of $\eta$ between $e$ and any critical point in $\eta$ is compact, and thus, a critical point with minimal potential must exist. We have that $f(\eta[e,z]) \subset \tilde{\eta}$ and $f(e)=f(z)=\tilde{e}$. These two conditions can only be simultaneously fulfilled if $\eta[e,z]$ contains at least another critical point, which contradicts that by the minimality of $z$, all points between $e$ and $z$ are regular. Thus, we have shown that $J(f)$ cannot contain logarithmic asymptotic values nor critical values, and so 
\ref{itemA_JCB} is proved. In particular, we have shown in the proof of \ref{itemA_JCB} that the endpoint of a hair must be mapped to the endpoint of the hair that contains its image under $f$, and so \ref{itemB_JCB} follows.

In order to prove \ref{itemC_JCB}, for each constant $R>0$, let $S_R$ be a bounded set provided by Proposition \ref{prop_cantorbouquet}. Let $z\in J(f)\cap I(f)$ and let $\gamma$ be the (piece of) hair joining $z$ to infinity. Since $z\in I(f)$, there exists $N\defeq N(R,z) \in \N$ such that
\begin{equation} \label{eq_N}
\bigcup_{n\geq N}f^{n}(z)\subset \C \setminus S_R.
\end{equation}
This together with Proposition \ref{prop_cantorbouquet} implies that $f^{n}(\gamma) \subset \C \setminus S_R\subset \C \setminus \D_{R} $ for each $n\geq N$ since, otherwise, $f^{n}(\gamma)$ would intersect $\partial S_R$ at least twice, contradicting Proposition \ref{prop_cantorbouquet}. Hence, since this argument applies to all $R$, $f^n\vert_{\gamma} \rightarrow \infty$ uniformly as $n\rightarrow \infty$. We deduce~\ref{itemC_JCB}.

If $f\in \B$ is of disjoint type, then, by definition, $P(f)\subset F(f)$. By this and since the image of each hair must lie in a hair, the restriction of $f$ to any hair of $J(f)$ is injective. By \cite[Theorem 5.8]{lasse_arclike}, each hair of $J(f)$ but at most its endpoint belongs to $I(f)$, and thus, by \ref{itemC_JCB}, each hair of $J(f)$ is a dynamic ray together with its endpoint. 
\end{proof}

\begin{cor}[Strongly absorbing Cantor bouquet implies criniferous]\label{cor_CB_criniferous} If $f\in \B$ and $J_R(f)$ is a Cantor bouquet for some $R\geq 0$, then $f$ is criniferous.
\end{cor}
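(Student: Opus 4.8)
The plan is to verify the characterisation in the remark following Definition \ref{def_ray}: $f$ is criniferous as soon as every $z\in I(f)$ has a forward iterate lying on a ray tail. Write $X\defeq J_R(f)$, which is a Cantor bouquet by hypothesis. I will first record the structural facts about $X$ that play, for $X$, the role the properties of $J(f)$ played in the proof of Proposition \ref{prop_Ts}: (i) $X$ is forward invariant and $f(X)\subset\{w:|w|\ge R\}$, since $z\in X$ forces $f(z)\in J(f)$ with $|f^n(f(z))|=|f^{n+1}(z)|\ge R$ for all $n\ge1$, while $|f(z)|\ge R$; and (ii) every connected subset of a Cantor bouquet lies in a single hair, because transporting to a straight brush by an ambient homeomorphism turns it into a connected subset of $B$ whose projection to the second coordinate is a connected subset of $\R\setminus\Q$, hence a point. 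Combining (i) and (ii), $f$ maps each hair of $X$ into a single hair of $X$.

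Now fix $z\in I(f)$. Since $|f^n(z)|\to\infty$, I would choose $N$ so large that $w\defeq f^N(z)\in X$ and, in addition, $w$ lies outside a bounded simply connected domain $S_{R_0}\Supset\D_{R_0}$ provided by Proposition \ref{prop_cantorbouquet} for $X$, where $R_0$ is at least the radius of a disc containing $S(f)$. Let $\eta$ be the hair of $X$ through $w$ and set $\gamma\defeq\eta[w,\infty)$, the sub-hair joining $w$ to $\infty$; since $\eta$ meets $\partial S_{R_0}$ at most once and $w\notin\overline{S_{R_0}}$, all of $\gamma$ lies outside $\overline{S_{R_0}}$, and $\gamma$ reaches $\infty$. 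As $w=f^N(z)\in\gamma$, it suffices to prove that $\gamma$, with its parametrisation as an injective curve, is a ray tail. For the uniform escape I would argue as in the proof of Proposition \ref{prop_Ts}\ref{itemC_JCB}: fix $R'>0$, take $S_{R'}\Supset\D_{R'}$ from Proposition \ref{prop_cantorbouquet}, and choose $N'$ with $f^n(w)\notin\overline{S_{R'}}$ for all $n\ge N'$ (possible since $w\in I(f)$). By (i)–(ii), $f^n(\gamma)$ is a connected subset of a single hair $\eta_n$ of $X$ containing $f^n(w)$; the point that needs work is that $f^n(\gamma)$ still \emph{reaches} $\infty$, i.e.\ is a sub-hair of $\eta_n$ up to $\infty$. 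Granting this, $f^n(\gamma)$ is a curve joining $f^n(w)\notin\overline{S_{R'}}$ to $\infty$ inside the single hair $\eta_n$, so it cannot meet $S_{R'}$, for otherwise it would cross $\partial S_{R'}$ at least twice, contradicting Proposition \ref{prop_cantorbouquet}. Hence $f^n(\gamma)\subset\C\setminus S_{R'}\subset\C\setminus\D_{R'}$ for $n\ge N'$, and since $R'$ was arbitrary, $f^n|_\gamma\to\infty$ uniformly; in particular $\gamma\subset I(f)$.

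For the injectivity required of a ray tail, I would use uniform escape to pick $m_0$ such that $f^m(\gamma)$ and its image $f^{m+1}(\gamma)$ both lie outside $\overline{\D}_L$ for all $m\ge m_0$, where $\D_L\supset S(f)$; as a sub-hair is simply connected, each such $f^m(\gamma)$ lifts through $\exp$ to a connected curve contained in a single logarithmic tract $T$ of a logarithmic transform $F$ of $f$ (Discussion \ref{dis_def_transform}), on which $F|_T$ is injective by \ref{dis_prop_transform}\ref{item_a_transform}; together with the $2\pi i$-periodicity of $F$ this gives that $f$ is injective on $f^m(\gamma)$ for every $m\ge m_0$, whence $f^n$ is injective on $Y\defeq f^{m_0}(\gamma)$ for every $n\ge1$. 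Then $Y$ is a ray tail containing $f^{m_0}(w)=f^{N+m_0}(z)$, so $f$ is criniferous. The main obstacle is the inductive claim that the sets $f^n(\gamma)$ keep reaching $\infty$: this is exactly the place where one must mesh the combinatorics of the Cantor bouquet $X$ (through Proposition \ref{prop_cantorbouquet}) with the logarithmic change of variables of \S\ref{sec_log} — concretely, the ``outer'' part of $f^{n-1}(\gamma)$ lies far from $S(f)$, hence lifts into a single tract on which $F$ carries points tending to $\infty$ to points tending to $\infty$ (\ref{dis_prop_transform}\ref{item_a_transform}), via the expansion estimate \eqref{eq_normalized} — and it is what makes the argument of Proposition \ref{prop_Ts}\ref{itemC_JCB} go through with $J_R(f)$ in place of $J(f)$.
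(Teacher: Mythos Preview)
Your approach is essentially the paper's: the paper's proof is three lines, citing Proposition~\ref{prop_Ts}\ref{itemC_JCB} to conclude that the piece of hair in $J_R(f)$ from $f^N(z)$ to $\infty$ is a ray tail, and you are unpacking exactly that citation by rerunning the argument with the forward-invariant Cantor bouquet $J_R(f)$ in place of $J(f)$. You are more careful than the paper in two places---supplying the injectivity via logarithmic coordinates, and isolating the step that $f^n(\gamma)$ keeps reaching $\infty$---both of which the paper absorbs silently into the reference to Proposition~\ref{prop_Ts} (where, in the hypothesis $J(f)$ a Cantor bouquet, that step follows from parts~\ref{itemA_JCB} and~\ref{itemB_JCB}).
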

\begin{proof}
Let $f\in \B$ as in the statement, and let $z\in I(f)$. Then, since $I(f)\subset J(f)$, \cite{eremenkoclassB}, there exists $N\in \N$ such that $f^N(z)\in J_R(f)$. Then, by Proposition \ref{prop_Ts}\ref{itemC_JCB}, the piece of hair in $J_R(f)$ that joins $f^N(z)$ to infinity is a ray tail, and so $f$ is criniferous. 
\end{proof}

\begin{remark}
The converse of Corollary \ref{cor_CB_criniferous} does not even hold for disjoint type functions. That is, there exist criniferous disjoint type maps whose Julia set is not a Cantor bouquet. Such an example is built in forthcoming work of the author and L. Rempe. 
\end{remark}

Next, we prove Theorem \ref{thm_intro_pi}. It follows from Proposition \ref{prop_Ts} that if $f$ is of disjoint type, then each hair of $J(f)$ contains an unbounded connected component in $J_R(f)$. However, the union of these unbounded components need not be a Cantor bouquet, as we cannot guarantee ``continuity'' across their endpoints: Some hairs, unlike some of their neighbouring ones, might be tangent or intersect $\partial \D_R$ several times; see Figure \ref{figure: proj_cantor}. We overcome this obstacle by looking at points whose orbit never meets a set $S_R$ provided by Proposition~\ref{prop_cantorbouquet}:
\begin{discussion}[Definition of the functions $\pi_{\!_R}$] \label{dis_pi}
Let $f\in \mathcal{B}$ be a disjoint type function for which $J(f)$ is a Cantor bouquet. We define a partial order in $J(f)$ as follows. For $z,w\in J(f)$,
\begin{equation}\label{eq_relation}
w\succeq z  \quad \text{ if and only if } \quad w\in [z,\infty],
\end{equation}
where $[z,\infty]$ is the unique arc in $J(f)\cup \{\infty\}$ connecting $z$ and $\infty$. As usual, we denote by $\succ$ the strict total order associated to $\succeq$, defined as $w\succ z$ if not $z\succeq w$. Moreover, the respective inverse orders $\preceq$ and $\prec$ are defined as $w \preceq z$ if and only if $z \succeq w$, and $w \prec z$ if and only if $z \succ w$.

For each $R>0$, let $S_R$ be a domain given by Proposition \ref{prop_cantorbouquet}, and let $J_R(f)$ be as defined in \eqref{eq_JR}. Then, we define the function $\pi_{\!_R} \colon J(f)\rightarrow J_R(f)$ as 
\begin{equation}\label{eq_pi}
\pi_{\!_R}(z)\defeq \inf\bigg\{w \succeq z : \bigcup_{n\geq 0}f^n(w) \subset \C \setminus S_R \bigg\}.
\end{equation}
\end{discussion}
In other words, provided we show that $\pi_{\!_R}$ is well-defined, the function $\pi_{\!_R}$ acts as the identity for all those points whose forward orbit never meets $S_R$ (and in particular belong to $J_R(f)$), while all other points are projected to the closest point ``to their right'' in the hair they belong to that is of the first type.

The following theorem is a more detailed version of Theorem \ref{thm_intro_pi}.
\begin{thm}[Continuity of $\pi_{\!_R}$]\label{thm_pi_cont}For each $R>0$, the map $\pi_{\!_R}\colon J(f)\rightarrow J_R(f)$ from \eqref{eq_pi} is well-defined and continuous. Moreover, the following hold:
\begin{enumerate}[label=(\alph*)]
\item $X\defeq \pi_{\!_R}(J(f))$ is a strongly absorbing Cantor bouquet; \label{item:Xbouquet}
\item $\pi_{\!_R}\vert_X$ is the identity map;\label{item:pi_id}
\item The set $B\defeq\{ z\in J(f)\colon f(\pi_{\!_R}(z))\neq \pi_{\!_R}(f(z))\}$ is bounded, and if $z\in B$, then  $[\pi_{\!_R}(f(z)),f(\pi_{\!_R}(z))]$ is contained in $\pi(f(\overline{B}))$.\label{item:pi_SR}
\end{enumerate}
If, in addition, there is $L>0$ such that $S(f)\subset \D_L$ and $f^{-1}(\C\setminus \D_L)\subset \C\setminus \D_{e^{8\pi}L},$ then there is a constant $M>R$ such that $\pi_{\!_R}(z) \in \overline{A(M^{-1}\vert z \vert, M \vert z \vert)}$ for all $z\in J(f).$ 
\end{thm}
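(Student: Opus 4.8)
The plan is to distil from the definition of $\pi_{\!_R}$ an explicit formula, prove continuity via the straight-brush model, and then read off (a)--(c) and the metric bound. Throughout I write $\eta(z)$ for the hair of $J(f)$ through $z$, let $e_\eta$ be the endpoint of a hair $\eta$, put $R'\defeq\sup\{|w|:w\in\overline{S_R}\}$, and set
\[
G\defeq\{w\in J(f):f^n(w)\in\C\setminus S_R\ \text{for all}\ n\ge0\}=J(f)\cap\bigcap_{n\ge0}f^{-n}(\C\setminus S_R),
\]
a closed, forward-invariant set with $J_{R'+1}(f)\subseteq G\subseteq J_R(f)$. First I would record two structural facts, using only that $J(f)$ is a disjoint-type Cantor bouquet. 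By Proposition~\ref{prop_Ts} ($J(f)$ has no critical points, endpoints map to endpoints, images of hairs are hairs, and $f$ is injective on each hair), $f$ restricts to an order-preserving homeomorphism of each hair $\eta$ onto $f(\eta)$, hence so does $f^n$. Second, since in the brush model $S_R=\psi^{-1}((-Q,Q)^2)$ with $Q$ rational while the hairs sit at irrational heights, $\eta\cap\overline{S_R}$ is, for every hair $\eta$, a (possibly empty or degenerate) initial sub-arc of $\eta$ based at $e_\eta$; equivalently $\eta\cap G$ is $\preceq$-upward closed, and being closed it equals $[m_\eta,\infty]\cap J(f)$ for a unique $m_\eta\in\eta$ whenever it is nonempty. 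That it is always nonempty follows by combining the uniform escape along ray tails (Proposition~\ref{prop_Ts}\ref{itemC_JCB}) with the second fact: only finitely many iterates can drag a tail of $\eta$ into the bounded set $\overline{S_R}$, so pushing past the finitely many resulting initial sub-arcs produces points of $\eta$ in $G$. Consequently $\{w\succeq z:w\in G\}=[\max\{z,m_\eta\},\infty]\cap J(f)$, so $\pi_{\!_R}$ is well defined with $\pi_{\!_R}(z)=\max\{z,m_\eta\}\in G$; in particular $\pi_{\!_R}|_G=\mathrm{id}$, whence $X\defeq\pi_{\!_R}(J(f))=G$. This is item (b), and since $f(X)=f(G)\subseteq G=X$ and $J_{R'+1}(f)\subseteq X$, once $X$ is shown to be a Cantor bouquet it is strongly absorbing.

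For continuity I would transport everything to the model via the ambient homeomorphism $\psi\colon J(f)\to B$: writing $\psi(z)=(x,y)$ and $\psi(m_{\eta(z)})=(s_y,y)$ gives $\psi\circ\pi_{\!_R}\circ\psi^{-1}(x,y)=(\max\{x,s_y\},y)$ and $\psi(X)=\{(x,y)\in B:x\ge s_y\}=:B'$, the hairwise trimming of $B$, which is closed and meets every hair of $B$. One half of continuity is soft: if $(x_k,y_k)\to(x_0,y_0)$ in $B$ then $x_k\to x_0$, and any accumulation point of the new endpoints $(s_{y_k},y_k)$ lies in the closed set $B'$, hence on its hair at height $y_0$, so $\liminf_k s_{y_k}\ge s_{y_0}$ and therefore $\liminf_k\max\{x_k,s_{y_k}\}\ge\max\{x_0,s_{y_0}\}$. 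The heart of the theorem is the reverse inequality --- that trim points cannot overshoot: if hairs $\eta_k$ carry points $z_k\to\zeta_0$, then $\limsup_k s_{y_k}\le\max\{x_0,s_{y_0}\}$, the first coordinate of $\psi(\pi_{\!_R}(\zeta_0))$. I expect this to be the main obstacle. I would argue by contradiction and a dichotomy on first return times to $S_R$: after passing to a subsequence one obtains $m_{\eta_k}\to\zeta^\ast\in X$ with $\zeta^\ast$ strictly beyond $\pi_{\!_R}(\zeta_0)$ on a common hair, while on each $\eta_k$ the orbit of every point $\prec m_{\eta_k}$ --- including the one near $\zeta_0$ --- meets $S_R$; if the relevant return times stay bounded, continuity of $f$ and $\zeta_0,\zeta^\ast\in G$ push an orbit point of $\zeta^\ast$ onto $\partial S_R$ and force $\zeta^\ast=\zeta_0$; if they tend to infinity, one feeds the data into the forward-invariance relation of the next paragraph (that $f(m_\eta)\succ m_{f(\eta)}$ only when $m_\eta\in\partial S_R$) and applies the one-crossing property of Proposition~\ref{prop_cantorbouquet} to the hairs $f^n(\eta_k)$, contradicting the closedness of $G$. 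The delicate point throughout is handling orbit points that graze $\partial S_R$.

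For item (c), combining the first structural fact with $\pi_{\!_R}(z)=\max\{z,m_\eta\}$ and writing $\eta'=f(\eta)$ gives $f(\pi_{\!_R}(z))=\max\{f(z),f(m_\eta)\}$ and $\pi_{\!_R}(f(z))=\max\{f(z),m_{\eta'}\}$; since $f(m_\eta)\in G\cap\eta'$ forces $f(m_\eta)\succeq m_{\eta'}$, a one-line case check yields $z\in B\iff z\prec m_\eta$ and $f(m_\eta)\succ m_{\eta'}$. I would then prove that $f(m_\eta)\succ m_{\eta'}$ implies $m_\eta\in\partial S_R$: otherwise points $w\prec m_\eta$ arbitrarily close to $m_\eta$ lie outside $\overline{S_R}$, so their first $S_R$-hitting time is $\ge1$, so $f(w)\prec f(m_\eta)$ lies outside $G$; letting $w\uparrow m_\eta$ produces points of $\eta'$ strictly between $m_{\eta'}$ and $f(m_\eta)$ that are not in $G$, contradicting $(m_{\eta'},f(m_\eta))\subseteq G\cap\eta'$. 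Then the second structural fact gives $[e_\eta,m_\eta]\subseteq\overline{S_R}$; since $z\in B$ means $z\in[e_\eta,m_\eta)$, we get $B\subseteq\overline{S_R}$, so $B$ is bounded. For the second assertion, when $z\in B$ the arc $[\pi_{\!_R}(f(z)),f(\pi_{\!_R}(z))]=[\max\{f(z),m_{\eta'}\},f(m_\eta)]$ lies in $[m_{\eta'},f(m_\eta)]\subseteq G\cap\eta'$; every point $v$ there satisfies $\pi_{\!_R}(v)=v$ and, lying in $\eta'=f(\eta)$ below $f(m_\eta)$, equals $f(u)$ for some $u\in[e_\eta,m_\eta]\subseteq\overline{B}$ (note $[e_\eta,m_\eta)\subseteq B$, the condition $f(m_\eta)\succ m_{\eta'}$ depending only on $\eta$), so $v=\pi_{\!_R}(v)\in\pi_{\!_R}(f(\overline{B}))$.

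With continuity in hand, item (a) follows by checking the straight-brush axioms for $B'$: it is closed, each slice $\{x:(x,y)\in B'\}$ is a ray, the heights of its hairs coincide with those of $B$ (no hair is deleted) hence are dense, and the approximation clause transfers because the trimmed endpoint at height $y$ equals $\psi(\pi_{\!_R}(e_\eta))$ and continuity carries approximating old endpoints $e_{\eta_n}\to e_\eta$ to approximating new endpoints $m_{\eta_n}\to m_\eta$; thus $\psi$ exhibits $X$ as ambiently homeomorphic to the straight brush $B'$, so with the first paragraph $X$ is a strongly absorbing Cantor bouquet. Finally, under the extra hypothesis a logarithmic transform $F$ of $f$ has $\overline{\T}_F=\exp^{-1}(f^{-1}(\C\setminus\D_L))\subseteq\H_{\log L+8\pi}$, so Proposition~\ref{prop_contraction} applies and $\exp(J(F))=J(f)$ by Observation~\ref{obs_disj}; moreover $J(f)\subseteq\{|w|\ge e^{8\pi}L\}$. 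Given $z$ with $\pi_{\!_R}(z)=m_\eta\succ z$, lift $z,m_\eta$ to $\tilde z,\tilde w$ on a common component $J_{\s}(F)$. Let $j_0$ be the first time $f^{j_0}(m_\eta)\in\overline{S_R}$; by the previous paragraph $f^{j_0}(m_\eta)\in\partial S_R$, and the second structural fact applied to $f^{j_0}(\eta)$ --- on which $f^{j_0}(z)\prec f^{j_0}(m_\eta)$ --- gives $f^{j_0}(z)\in\overline{S_R}$ too. Then $F^{j_0}(\tilde z),F^{j_0}(\tilde w)$ lie on one hair-lift inside $\exp^{-1}(\overline{S_R})$, whose points have real and (using \eqref{eq_normalized} to control the tract over this fixed bounded annulus) imaginary parts confined to bounded intervals, so $|F^{j_0}(\tilde z)-F^{j_0}(\tilde w)|\le C_0$ for a constant $C_0=C_0(R,f)$; Proposition~\ref{prop_contraction} then gives $|\tilde z-\tilde w|\le2^{-j_0}C_0\le C_0$, whence $\bigl|\log|z|-\log|m_\eta|\bigr|\le|\Rea\tilde z-\Rea\tilde w|\le C_0$, and $M\defeq\max\{e^{C_0},R+1\}$ works. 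The residual case $j_0=\infty$ is treated the same way, using again that $B\subseteq\overline{S_R}$.
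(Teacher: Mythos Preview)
Your overall strategy—identifying $X$ with the closed forward-invariant set $G$, expressing $\pi_{\!_R}(z)=\max\{z,m_\eta\}$, and reading off (b) and (c) from this formula—is correct and in some places cleaner than the paper (for instance, the characterisation $z\in B\iff z\prec m_\eta$ and $f(m_\eta)\succ m_{f(\eta)}$, together with the implication $m_\eta\in\partial S_R$, is a nice direct route to the boundedness of $B$). The metric estimate via Proposition~\ref{prop_contraction} is also essentially the paper's argument.

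The genuine gap is exactly where you flag it: the upper semicontinuity of $y\mapsto s_y$, i.e.\ the ``unbounded return times'' branch of your dichotomy. Your sketch invokes the relation ``$f(m_\eta)\succ m_{f(\eta)}$ only when $m_\eta\in\partial S_R$'' and the one-crossing property, but neither yields a contradiction on its own. Concretely, nothing you have written rules out the following scenario: hairs $\eta_k\to\eta_0$ with $m_{\eta_k}\to\zeta^\ast\succ m_{\eta_0}$, where for points $p_k\in\eta_k$ just below $m_{\eta_k}$ the first time $n_k$ with $f^{n_k}(p_k)\in S_R$ tends to infinity. At every finite stage the orbits of $p_k$ and of the limit point look identical, and closedness of $G$ is compatible with this picture. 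What is missing is a \emph{quantitative} reason why information from arbitrarily deep iterates cannot accumulate.

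The paper supplies precisely this via a two-step mechanism that you do not use. First, it defines finite-stage maps $\pi_n$ by replacing $G$ with $\bigcap_{j\le n}f^{-j}(\C\setminus S_R)$ and proves each $\pi_n$ continuous by a direct local argument (Claim~2): one pulls back small balls centred at the finitely many boundary-hitting iterates $f^{j_i}(z_n)\in\partial S_R$. Second—and this is the key idea you are missing—it uses the uniform hyperbolic expansion of a disjoint-type map on the complement of $P(f)$ to show that $d_W(z_{n-1}(\eta),z_n(\eta))\le MQ/\Lambda^n$ \emph{uniformly in $\eta$}. This makes $\{z_n\}$ a Cauchy sequence with rate independent of the hair, hence $\pi_n\to\pi_{\!_R}$ uniformly, and continuity of $\pi_{\!_R}$ follows. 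Without some expansion/contraction input of this kind, your dichotomy has no teeth in the unbounded case; the same issue resurfaces in your ``residual case $j_0=\infty$'' for the metric bound, which in fact cannot occur once one has the Cauchy estimate (it forces the sequence $z_n(\eta)$ to stabilise whenever $m_\eta\neq e_\eta$).
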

\begin{proof}
Let us fix some $R>0$. Seeking simplicity of notation, $\pi_{\!_R}$ will be denoted by $\pi$. For each $n\geq 0$, we define the functions $\pi_n: J(f) \rightarrow J(f)$ as
\begin{equation}\label{eq_defpin}
\pi_n(z)\defeq \min\bigg\{w \succeq z : \bigcup^n_{j=0}f^j(w) \subset \C \setminus S_R \bigg\}.
\end{equation}

We are aiming to prove that the functions $\{\pi_n\}_{n\in \N}$ are well-defined, continuous and converge to the function $\pi$ as $n$ tends to infinity. Seeking clarity of exposition, we may assume without loss of generality that $J(f)$ is an embedding in $\C$ of a straight brush. Otherwise, we could prove continuity of the functions $\{\pi_n\}_{n\in \N}$ by showing that, after the usual identification of $\C$ and $\R^2$, the functions $\{(\psi^{-1}\circ \pi_n \circ \psi) \colon A \rightarrow A\}_{n\in \N}$ are continuous, where $A$ is a straight brush and $\psi$ is the corresponding ambient homeomorphism such that $\psi(A)=J(f)$. Thus, with this assumption made, the set $S_R$ from Proposition \ref{prop_cantorbouquet} can be chosen to be of the form $S_R=(-Q,Q)^2$ for some $Q\in \Q$. Then, each hair of $J(f)$ is a horizontal half-line, parallel to the real axis, that by Proposition \ref{prop_cantorbouquet} intersects $\partial S_R$ in at most one point belonging to the vertical line $\{z: \Rea\: z=Q\}$.

For each $n\in \N$ and each hair $\eta$ of $J(f)$, let 
$$z_n\defeq z_n(\eta)=\min\bigg\{z\in \eta: \bigcup^n_{j=0} f^j(z) \subset \C \setminus S_R \bigg\},$$
where the minimum is taken with respect to the relation ``$\preceq$'' from \eqref{eq_relation}. Note that each point $z_n$ is indeed well-defined: by Proposition \ref{prop_Ts}, for each $0\leq j\leq n$, $f^j \vert_{ \eta}$ is a bijection to a hair of $J(f)$, and by Proposition \ref{prop_cantorbouquet}, $f^j(\eta)\cap \partial S_R$ consists of at most one point. Thus, since $\partial S_R \subset \C \setminus S_R$, $z_n$ is either the endpoint of $\eta$, or $z_n$ is the point in $\eta$ with greatest potential such that $f^{j}(z_n) \in \partial S_R$ for some $j\leq n$. This allows us to conveniently express, for each $n\in \N$, the action of $\pi_n$ the following way:
\begin{align*}
\text{ if } z \text{ is in the hair } \eta \subset J(f), \ \text{ then } \ \pi_n(z)& = \renewcommand{\arraystretch}{1.5}\left\{\begin{array}{@{}l@{\quad}l@{}} z_n(\eta) & \text{if } z\prec z_n(\eta), \\
z & \text{if } z\succeq z_n(\eta),
\end{array}\right.\kern-\nulldelimiterspace
\end{align*}
and thus, it follows that the functions $\pi_n$ are well-defined.
\begin{Claim_num}\label{claim1}
For each hair $\eta$ of $J(f)$ and all $n\geq 0$, $z_{n}(\eta)\preceq z_{n+1}(\eta)$.
\end{Claim_num}
\begin{subproof1}
Since $f^n\vert_{\eta}$ maps bijectively to another hair, for any $z,w \in \eta$, $z\prec w$ if and only if $f^n(z)\prec f^n(w).$ In particular, using Proposition \ref{prop_cantorbouquet}, if $f^{n+1}(z_n) \in \C\setminus S_R$, then $z_n=z_{n+1}$. If on the contrary $f^{n+1}(z_n) \in S_R$, then $f^{n+1}(z_{n+1})=f^{n+1}(\eta) \cap \partial S_R$, and thus $z_n\prec z_{n+1}$.
\end{subproof1}

In order to prove continuity of each function $\pi_n$, we will use that for ``close enough'' hairs, their corresponding ``projection points'' $z_n$ are close. More precisely:
\begin{Claim_num}\label{claim2}
Let $\eta$ be a hair of $J(f)$ for which $f^j(z_n(\eta)) \in \partial S_R$ for some $0 \leq j\leq n$. Then, there exists a constant $\alpha\defeq \alpha_n(\eta)>0$ such that if $\tilde{\eta}$ is another hair of $J(f)$ for which $B_\alpha(z_n(\eta))\cap \tilde{\eta}\neq \emptyset,$ then $z_n(\tilde{\eta}) \in B_\alpha(z_n(\eta)).$ 
\end{Claim_num}
\begin{subproof2}
Seeking clarity of exposition, for the proof of this claim we use the notation $z_j\defeq z_j(\eta)$ and $\tilde{z}_j\defeq z_j(\tilde{\eta})$ for each $0 \leq j \leq n$. Let 
\begin{equation}\label{eq_j}
0\leq j_1<j_2< \ldots <j_k\leq n
\end{equation}
be the sequence of iterates for which $f^{j_i}(z_n) \in \partial S_R.$ In particular, by the assumption in the claim, the constant $k$ in \eqref{eq_j} is at least $1$. It follows from Claim \ref{claim1} that $z_j=z_{n}$ for all $j_1 \leq j\leq n.$ Thus, our first goal is to find a neighbourhood $A_1$ of $z_{j_1}$ such that if another hair $\tilde{\eta}$ intersects that neighbourhood, then its corresponding $\tilde{z}_{j_1}$ belongs to it. If $j_1=0$, since $Q$ is a rational number and hairs of straight brushes are by definition at irrational heights, we can choose any $\epsilon_1>0$ such that $\vert Q-\Ima(z_0) \vert< \epsilon_1.$ If $\tilde{\eta}$ is a hair intersecting $B_{\epsilon_1}(z_0)$, since $\tilde{\eta}$ is a horizontal straight line, either $\tilde{z}_{0}$ is the single point in $\partial S_R \cap \tilde{\eta}$ whenever that intersection is non-empty, or $\tilde{z}_{0}$ is the endpoint of $\tilde{\eta}$. In any case $\tilde{z}_{0} \in B_{\epsilon_1}(z_0)$, and so we define $A_1\defeq B_{\epsilon_1}(z_0)$. 

If, on the contrary, $j_1\neq 0$, since by definition $f^j(z_{j_1})\in \C \setminus \overline{S}_R$ for all $j< j_1$, using that $f$ is an open map, we can find $\delta>0$ such that $f^j(B_{\delta}(z_{j_1})) \subset \C \setminus \overline{S}_R$ for all $0\leq j<j_1$. For the same reason as before, we can choose some $\epsilon_1>0$ so that $$B_{\epsilon_1}(f^{j_1}(z_{j_1})) \subset f^{j_1}(B_{\delta}(z_{j_1})) \quad \text{ and } \quad \vert Q- \Ima(f^{j_1}(z_{j_1}))\vert < \epsilon_1. $$
That is, we are choosing some ball centred at $f^{j_1}(z_{j_1})$ and contained in the $j_1$th-image of $B_{\delta}(z_{j_1})$ such that, by our assumption on $S_R$ being the open square $(-Q, Q)^2$ and because $f^{j_1}(z_{j_1}) \in \partial S_R$, half of the ball lies in $S_R$; see Figure \ref{figure_proofBouquet}. Let $A_1$ be the connected component of the $j_1$-th preimage of $B_{\epsilon_1}(f^{j_1}(z_{j_1}))$ that contains $z_n \in B_{\delta}(z_{j_1})$; that is, $$ A_1\subseteq f^{-j_1}(B_{\epsilon_1}(f^{j_1}(z_{j_1}))) \cap B_{\delta}(z_{j_1}).$$

\begin{figure}[htb]
\centering
\begingroup%
  \makeatletter%
  \providecommand\color[2][]{%
    \errmessage{(Inkscape) Color is used for the text in Inkscape, but the package 'color.sty' is not loaded}%
    \renewcommand\color[2][]{}%
  }%
  \providecommand\transparent[1]{%
    \errmessage{(Inkscape) Transparency is used (non-zero) for the text in Inkscape, but the package 'transparent.sty' is not loaded}%
    \renewcommand\transparent[1]{}%
  }%
  \providecommand\rotatebox[2]{#2}%
  \newcommand*\fsize{\dimexpr\f@size pt\relax}%
  \newcommand*\lineheight[1]{\fontsize{\fsize}{#1\fsize}\selectfont}%
  \ifx\svgwidth\undefined%
    \setlength{\unitlength}{396.8503937bp}%
    \ifx\svgscale\undefined%
      \relax%
    \else%
      \setlength{\unitlength}{\unitlength * \real{\svgscale}}%
    \fi%
  \else%
    \setlength{\unitlength}{\svgwidth}%
  \fi%
  \global\let\svgwidth\undefined%
  \global\let\svgscale\undefined%
  \makeatother%
  \begin{picture}(1,0.64285714)%
    \lineheight{1}%
    \setlength\tabcolsep{0pt}%
    \put(0,0){\includegraphics[width=\unitlength,page=1]{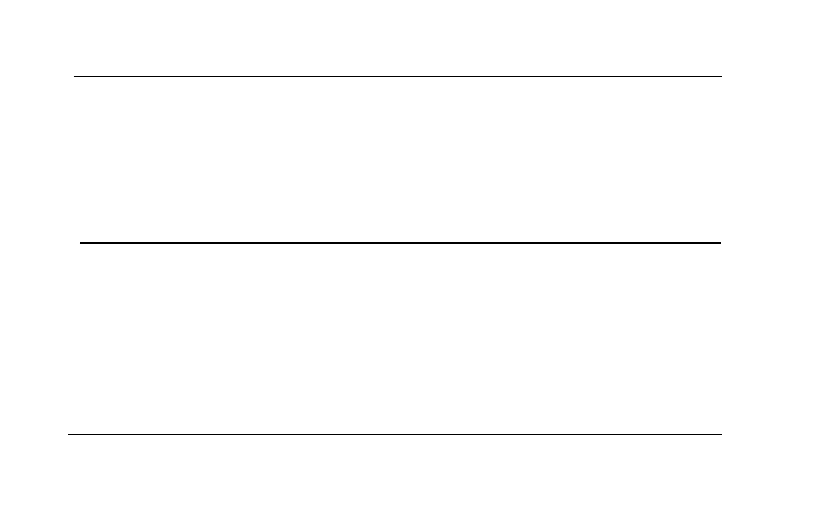}}%
    \put(0.88287152,0.1114353){\color[rgb]{0,0,0}\makebox(0,0)[lt]{\lineheight{1.25}\smash{\begin{tabular}[t]{l}$\eta$\end{tabular}}}}%
    \put(0.87718969,0.34462796){\color[rgb]{0,0,0}\makebox(0,0)[lt]{\lineheight{1.25}\smash{\begin{tabular}[t]{l}$f^{j_1}(\eta)$\\\end{tabular}}}}%
    \put(0.8811142,0.54783121){\color[rgb]{0,0,0}\makebox(0,0)[lt]{\lineheight{1.25}\smash{\begin{tabular}[t]{l}$f^{j_2}(\eta)$\\\end{tabular}}}}%
    \put(0,0){\includegraphics[width=\unitlength,page=2]{pibouquet.pdf}}%
    \put(0.71509765,0.09797617){\color[rgb]{0,0,0}\makebox(0,0)[lt]{\lineheight{1.25}\smash{\begin{tabular}[t]{l}$z_n$\\\end{tabular}}}}%
    \put(0.00610894,0.05352906){\color[rgb]{0,0,0}\makebox(0,0)[lt]{\lineheight{1.25}\smash{\begin{tabular}[t]{l}$x = -Q$\end{tabular}}}}%
    \put(0.01293903,0.61149557){\color[rgb]{0,0,0}\makebox(0,0)[lt]{\lineheight{1.25}\smash{\begin{tabular}[t]{l}$x = Q$\\\end{tabular}}}}%
    \put(0.12912595,0.01118287){\color[rgb]{0,0,0}\makebox(0,0)[lt]{\lineheight{1.25}\smash{\begin{tabular}[t]{l}$y = Q$\end{tabular}}}}%
    \put(0,0){\includegraphics[width=\unitlength,page=3]{pibouquet.pdf}}%
    \put(0.82109015,0.04810849){\color[rgb]{0.41568627,0.23137255,0.91372549}\makebox(0,0)[lt]{\lineheight{1.25}\smash{\begin{tabular}[t]{l}$B_{\delta}(z_n)$\end{tabular}}}}%
    \put(0,0){\includegraphics[width=\unitlength,page=4]{pibouquet.pdf}}%
    \put(0.58722937,0.22159504){\color[rgb]{0.41568627,0.23137255,0.91372549}\makebox(0,0)[lt]{\lineheight{1.25}\smash{\begin{tabular}[t]{l}$f$\end{tabular}}}}%
    \put(0.39631953,0.26313142){\color[rgb]{0.41568627,0.23137255,0.91372549}\makebox(0,0)[lt]{\lineheight{1.25}\smash{\begin{tabular}[t]{l}$f$\end{tabular}}}}%
    \put(0.2871021,0.30450072){\color[rgb]{0.41568627,0.23137255,0.91372549}\makebox(0,0)[lt]{\lineheight{1.25}\smash{\begin{tabular}[t]{l}$f$\end{tabular}}}}%
    \put(0.77942472,0.06648334){\color[rgb]{0.18039216,0.62745098,0.85490196}\makebox(0,0)[lt]{\lineheight{1.25}\smash{\begin{tabular}[t]{l}$A_1$\end{tabular}}}}%
    \put(0,0){\includegraphics[width=\unitlength,page=5]{pibouquet.pdf}}%
    \put(-0.05343832,0.3036762){\color[rgb]{0.18039216,0.62745098,0.85490196}\makebox(0,0)[lt]{\lineheight{1.25}\smash{\begin{tabular}[t]{l}$B_{\epsilon_1}(f^{j_1}(z_n))$\end{tabular}}}}%
    \put(0,0){\includegraphics[width=\unitlength,page=6]{pibouquet.pdf}}%
    \put(0.17868569,0.19219789){\color[rgb]{0.18039216,0.62745098,0.85490196}\makebox(0,0)[lt]{\lineheight{1.25}\smash{\begin{tabular}[t]{l}$f^{-1}$\end{tabular}}}}%
    \put(0.35188005,0.13573514){\color[rgb]{0.18039216,0.62745098,0.85490196}\makebox(0,0)[lt]{\lineheight{1.25}\smash{\begin{tabular}[t]{l}$f^{-1}$\end{tabular}}}}%
    \put(0,0){\includegraphics[width=\unitlength,page=7]{pibouquet.pdf}}%
    \put(0.39062429,0.40374421){\color[rgb]{0.18039216,0.62745098,0.85490196}\makebox(0,0)[lt]{\lineheight{1.25}\smash{\begin{tabular}[t]{l}$f$\end{tabular}}}}%
    \put(0,0){\includegraphics[width=\unitlength,page=8]{pibouquet.pdf}}%
    \put(0.58433713,0.52138926){\color[rgb]{0.18039216,0.62745098,0.85490196}\makebox(0,0)[lt]{\lineheight{1.25}\smash{\begin{tabular}[t]{l}$f$\end{tabular}}}}%
    \put(0.36227613,0.53225592){\color[rgb]{0.18039216,0.62745098,0.85490196}\makebox(0,0)[lt]{\lineheight{1.25}\smash{\begin{tabular}[t]{l}$f$\end{tabular}}}}%
    \put(-0.00652241,0.56039083){\color[rgb]{0.18039216,0.62745098,0.85490196}\makebox(0,0)[lt]{\lineheight{1.25}\smash{\begin{tabular}[t]{l}$f^{j_2}(A_1)$\end{tabular}}}}%
    \put(0,0){\includegraphics[width=\unitlength,page=9]{pibouquet.pdf}}%
    \put(0.18033589,0.58688543){\color[rgb]{0.85098039,0.44313725,0.63921569}\makebox(0,0)[lt]{\lineheight{1.25}\smash{\begin{tabular}[t]{l}$B_{\epsilon_2}(f^{j_2}(z_n))$\end{tabular}}}}%
    \put(0,0){\includegraphics[width=\unitlength,page=10]{pibouquet.pdf}}%
    \put(0.28959446,0.46705511){\color[rgb]{0.85098039,0.44313725,0.63921569}\makebox(0,0)[lt]{\lineheight{1.25}\smash{\begin{tabular}[t]{l}$f^{-1}$\end{tabular}}}}%
    \put(0.57704397,0.4509354){\color[rgb]{0.85098039,0.44313725,0.63921569}\makebox(0,0)[lt]{\lineheight{1.25}\smash{\begin{tabular}[t]{l}$f^{-1}$\end{tabular}}}}%
    \put(-0.01277359,0.23814276){\color[rgb]{0.41568627,0.23137255,0.91372549}\makebox(0,0)[lt]{\lineheight{1.25}\smash{\begin{tabular}[t]{l}$f^{j_1}(B_{\delta}(z_n))$\end{tabular}}}}%
    \put(0.74682574,0.15086466){\color[rgb]{0.85098039,0.44313725,0.63921569}\makebox(0,0)[lt]{\lineheight{1.25}\smash{\begin{tabular}[t]{l}$A_2$\end{tabular}}}}%
    \put(0,0){\includegraphics[width=\unitlength,page=11]{pibouquet.pdf}}%
    \put(0.52771944,0.10561494){\color[rgb]{0.18039216,0.62745098,0.85490196}\makebox(0,0)[lt]{\lineheight{1.25}\smash{\begin{tabular}[t]{l}$f^{-1}$\end{tabular}}}}%
    \put(0,0){\includegraphics[width=\unitlength,page=12]{pibouquet.pdf}}%
    \put(0.68903416,0.1362984){\color[rgb]{0.5372549,0.62745098,0.17254902}\makebox(0,0)[lt]{\lineheight{1.25}\smash{\begin{tabular}[t]{l}$\tilde{z}_n$\\\end{tabular}}}}%
    \put(0,0){\includegraphics[width=\unitlength,page=13]{pibouquet.pdf}}%
    \put(0.86484691,0.13843834){\color[rgb]{0.5372549,0.62745098,0.17254902}\makebox(0,0)[lt]{\lineheight{1.25}\smash{\begin{tabular}[t]{l}$\tilde{\eta}$\\\end{tabular}}}}%
  \end{picture}%
\endgroup%
\caption{Construction of a neighbourhood of $z_n(\eta)$ in Claim \ref{claim2} by pulling back balls centred at $f^j(z_n)$ for all $1\leq j\leq n$ such that $f^j(z_n) \in \partial S_R$.}
\label{figure_proofBouquet}
\end{figure}

We claim that if $\tilde{\eta}$ is any other hair in $J(f)$ such that $\tilde{\eta} \cap A_1 \neq \emptyset,$ then $\tilde{z}_{j_1} \in A_1$. Indeed, by definition of $A_1$, the hair $f^{j_1}(\tilde{\eta})$ must intersect $B_{\epsilon_1}(f^{j_1}(z_{j_1}))$, and so, two cases can occur according to whether $f^{j_1}(\tilde{\eta})$ intersects $S_R$ or not. If $f^{j_1}(\tilde{\eta}) \cap S_R=\emptyset$, by the assumption of $J(f)$ being a straight brush, the endpoint of the hair $f^{j_1}(\tilde{\eta})$ must be contained in $B_{\epsilon_1}(f^{j_1}(z_{j_1})) \setminus S_R$. Thus, $A_1$ contains the endpoint of $\tilde{\eta}$, that we denote by $\tilde{e}$, and so by the choice of $\delta$, $f^j(\tilde{e})\subset \C \setminus S_R$ for all $j< j_1$. Hence, $\tilde{z}_{j_1}=\tilde{e}$ is the endpoint of $\tilde{\eta}$. If, on the other hand, $f^{j_1}(\tilde{\eta})\cap S_R \neq \emptyset$, since $B_{\epsilon_1}(f^{j_1}(z_{j_1}))$ is convex and $f^{j_1}(\tilde{\eta})$ is a horizontal straight line, the intersection $B_{\epsilon_1}(f^{j_1}(z_{j_1})) \cap f^{j_1}(\tilde{\eta}) \cap \partial S_R$ consists of a unique point, that we denote by $p$. In particular, by definition of $\tilde{z}_{j_1}$, $p\preceq f^{j_1}(\tilde{z}_{j_1})$, and so, if $q$ is the preimage of $p$ in $A_1$, $q\preceq \tilde{z}_{j_1}$ by Claim \ref{claim1}. But by the choice of $\delta$, $f^{j}(q) \in \C \setminus S_R$ for all $1\leq j<j_1$, and so, by minimality of $\tilde{z}_{j_1}$, it must occur that $\tilde{z}_{j_1}=q\in A_1$.

If $j_1=n$, we have proved the claim. Otherwise, we can assume that $\epsilon_1$ has been chosen small enough so that for all $j_1< j\leq \min(j_2-1,n)$, $f^{j}(B_{\epsilon_1}(z_n)) \subset \C \setminus \overline{S}_R$. This implies that for any such $j$ and any hair $\tilde{\eta}$ intersecting $A_1$, $f^j(\tilde{z}_{j_1}) \in \C \setminus \overline{S}_R$, and thus, $\tilde{z}_{j_1}=\ldots =\tilde{z}_{\min(j_2-1,n)}$.

If $k=1$ in \eqref{eq_j} we are done. Otherwise, we aim to define a neighbourhood $A_2\subset A_1$ of $z_{j_2}(=z_n)$ with analogous properties to those of $A_1$. That is, such that if $\tilde{\eta}\cap A_2\neq \emptyset$ for some hair $\tilde{\eta}$, then $\tilde{z}_{j_2}\in A_2$. In order to do so, by the same argument as when we chose $\epsilon_1$, we can choose $\epsilon_2$ such that 
$$B_{\epsilon_2}(f^{j_2}(z_{j_2})) \subset f^{j_2}(A_1) \quad \text{ and } \quad \vert Q- \Ima(f^{j_2}(z_{j_2}))\vert < \epsilon_2. $$
Let $A_2$ be the connected component of $f^{-j_2}(B_{\epsilon_2}(f^{j_2}(z_n))) \cap A_1$ that contains $z_n$, and let $\tilde{\eta}$ be any other hair in $J(f)$ such that $\tilde{\eta} \cap A_2 \neq \emptyset.$ By definition, since $f^{j_2}(\tilde{\eta})$ is a horizontal straight line, if $f^{j_2}(\tilde{z}_{j_2-1})\in \C\setminus S_R$, then $\tilde{z}_{j_2-1}=\tilde{z}_{j_2}\in A_2.$ Otherwise, $f^{j_2}(\tilde{\eta})$ intersects $\partial S_R$ in a single point $p\in B_{\epsilon_2}(f^{j_2}(z_n))$ such that $p\succeq f^{j_2}(\tilde{z}_{j_2-1})$, and so $\tilde{z}_{j_2}$ is the preimage of $p$ in $A_2\subset A_1$. If $j_2\neq n$, then choose $\epsilon_2$ small enough so that $f^{j}(B_{\epsilon_2}(z_n)) \subset \C \setminus \overline{S}_R$ for all $j_2\leq j<\min(j_3-1,n)$ and hence, for any $\tilde{\eta}$ intersecting $A_2$ and all $j_1< j\leq \min(j_3-1,n)$, $\tilde{z}_{j_2}=\ldots =\tilde{z}_{\min(j_3-1,n)}$. 

Continuing the process for each $j_i$ in \eqref{eq_j}, we build a nested sequence of open sets $A_k \subseteq \cdots \subseteq A_i\subset \cdots \subseteq A_1$ such that $\tilde{z}_n \in A_k \subset B_{\delta}(z_n)$ for any hair $\tilde{\eta}$ such that $\tilde{\eta} \cap A_k \neq \emptyset.$ Hence, choosing any $\alpha$ so that $B_{\alpha}(z_n) \subset A_k$, the claim follows.
\end{subproof2}

Continuity of the functions $\{\pi_n\}_{n\in \N}$ is now a consequence of Claim \ref{claim2}: let us fix $n\in \N$, let $z\in J(f)$, in particular belonging to some hair $\eta$, and fix any $\epsilon>0$. We want to see that there exists $\delta>0$ such that $\pi_n(B_\delta(z)) \subset B_\epsilon(\pi_n(z))$. One of the following must hold:
\begin{itemize}[noitemsep,wide=0pt, leftmargin=\dimexpr\labelwidth + 2\labelsep\relax]
\item $z \succ z_n(\eta)$, and so $\pi_n(z)=z$. In particular, $f^j(z) \in \C \setminus \overline{S}_R$ for all $j\leq n$, and thus, we can choose $\delta<\epsilon$ small enough such that $f^j(B_\delta(z)) \subset \C \setminus \overline{S}_R$ for all $0\leq j\leq n$. This already implies that $\pi_n(w)=w$ for all $w\in B_\delta(z)\subset B_\epsilon(z)$, since by Proposition \ref{prop_cantorbouquet}, if $\tilde{\eta}$ is the hair containing $w$, then $w\prec z_n(\tilde{\eta})$ only if $f^{j}(w) \in S_R$ for some $j\leq n$, which cannot occur by the choice of $\delta$.

\item $z\prec z_n(\eta)$, and hence $\pi_n(z)=z_n(\eta)$ and $z$ is not the endpoint of $\eta$. Let $\alpha\defeq\alpha_n(\eta)$ be the constant given by Claim \ref{claim2}. If we choose $\delta<\min(\alpha, \epsilon)$, by $J(f)$ being a straight brush, any hair intersecting $B_\delta(z)$ also intersects $B_{\alpha}(z_n(\eta))$. By Claim \ref{claim2}, if $w\in \tilde{\eta} \cap B_\delta(z)$ for some hair $\tilde{\eta}$ and $w\prec z_n(\tilde{\eta})$, then $\pi_n(w)=z_n(\tilde{\eta})\in B_{\min(\alpha, \epsilon)}(z_n(\eta))$. Otherwise, the case of $w\succeq z_n(\tilde{\eta})$, and so $\pi_n(w)=w$, can only occur when $w\in B_\delta(z)\cap B_{\min(\alpha, \epsilon)}(z_n(\eta))$. Thus, we have again that $\pi_n(w) \in B_{\epsilon}(z_n(\eta))$, as we wanted to show.

\item $z= z_n(\eta)$, and hence $\pi_n(z)=z.$ If $f^j(z_n(\eta))\in \C \setminus\overline{S}_R$ for all $j\leq n$, then $z$ is the endpoint of $\eta$, and the same argument as in the first case applies. If, on the contrary, $f^j(z_n(\eta))\in \partial{S}_R$ for some $j\leq n$, then the same argument as in the second case applies.
\end{itemize}	

Now that we have shown that the functions $\{\pi_n\}_{n\in \N}$ are continuous, our next goal is to prove that they converge to a limit function. We start by showing that for each hair $\eta$ of $J(f)$, the sequence $\{z_n(\eta)\}_{n\in \N}$ is convergent by being a Cauchy sequence. By Claim \ref{claim1}, for each $n\geq 1$, either $z_{n-1}=z_n$ or $z_{n-1} \prec z_{n}$, the latter case occurring only if $f^{n}(z_{n-1}) \in S_R\cap \{z: 0\leq\Rea(z)<Q\}$ and $f^{n}(z_{n}) \in \partial S_R$. Thus, in the latter case, the Euclidean length of the piece of the hair $f^n(\eta)$ joining $f^{n}(z_{n-1})$ and $f^{n}(z_{n})$, that we denote by $\gamma$, is at most $Q$. That is, 
\begin{equation} \label{eq_lengthQ}
\ell_{\text{eucl}}(\gamma)\leq Q.
\end{equation}
Since the map $f$ is of disjoint type, and in particular hyperbolic, we can find an open neighbourhood $U$ of $P(f)$ such that $f(U)\subset \overline{U}$; see \cite[Proposition 2.8]{helenaSemi}. We might assume without loss of generality that $U$ has finite Euclidean perimeter and a smooth boundary, since otherwise, we can take a slightly smaller domain $P(f)\subset U'\subset U$ with such properties. Let $W\defeq\C\setminus \overline{U}$ and define the set of tracts $\T_f$ as the connected components of $f^{-1}(W)$. In particular, $\T_f\subseteq f^{-1}(W)\subset W$. Thus, $f \colon \T_f \rightarrow W$ is a covering map, and 
\begin{equation}\label{eq_f1W}
J(f)= \bigcap^\infty_{n=1}f^{-n}(W)\Subset W.
\end{equation}
We can endow $W$ with the hyperbolic metric induced from its universal covering map. Then, it is well-known that $f$ expands uniformly the hyperbolic metric of $W$, that is, there exists a constant $\Lambda>1$ such that $\vert \vert \operatorname{D}f(z)\vert \vert_W\geq \Lambda$ for all $z \in \T_f$; see, for example, \cite[Lemma 5.1]{lasseRigidity}.

By \cite[Lemma 2.1]{lasse_dreadlocks}, only finitely many pieces of tracts in $\T_f$ intersect $\overline{S}_R$. Let us consider the collection 
\begin{equation}\label{eq_collection_Ki}
\{K_1, \ldots, K_{\tilde{M}}\}
\end{equation}
of the closures of such pieces of tracts. By the choice of $\partial U$ being smooth and analytic, so are the boundaries of the tracts in $\T_f$, and, in particular, the boundaries of the sets $\{K_i\}^{\tilde{M}}_{i=1}$. By this, since $K_i \Subset W$ for each $i\leq \tilde{M}$, and the density function $\rho_W$ is continuous in each compact set $\overline{K}_i$, it attains a maximum value $M_{i}$ on it. Let $M\defeq\max_i M_i$. Recall that the straight line $\gamma$ joining $f^{n}(z_{n-1})$ and $f^{n}(z_{n})$ belongs to $J(f) \cap \overline{S}_R$, since in particular is a piece of hair. Hence, by \eqref{eq_f1W}, $\gamma$ must be totally contained in one of the compact sets $\{K_i\}_i$, and by \eqref{eq_lengthQ}, we have the following bound for its hyperbolic length:
\begin{equation}\label{eq_ intmu}
\ell_W(\gamma)=\int \vert \gamma'(t)\vert \rho_{W}(\gamma(t))dt \leq M \int \vert \gamma'(t)\vert dt= M\cdot\ell_{\text{eucl}}(\gamma) \leq M\cdot Q. 
\end{equation}

Let $\beta$ be the piece of the hair $\eta$ joining $z_{n-1}$ and $z_n$. Then, by \eqref{eq_f1W} and since $\vert \vert Df(z)\vert \vert_W\geq \Lambda$ for all $z\in \beta$, 
\begin{equation}\label{eq_pik} 
d_W(z_{n-1},z_n)\leq \ell_W (\beta) \leq \frac{\ell_W(\gamma)}{\Lambda^{n}} \leq \frac{M \cdot Q}{\Lambda^{n}}.
\end{equation}
Note that the upper bounds on the lengths of $\gamma$ and $\beta$ do not depend on the points $z_{n-1}$ and $z_n$, but only on them belonging to $S_R$, and hence $\{z_n(\eta)\}_{n\in \N}$ forms a Cauchy sequence in the complete space $(W, \rho_W)$. This sequence converges to a limit point, that we denote by $z_\eta$. Consequently, we have shown that the functions $\{\pi_n\}_{n\in \N}$ converge to a continuous limit function $\pi$ such that
\begin{align}\label{eq_express_pi}
\text{ if } z\in \eta, \quad \text{ then } \quad \pi(z)& = \renewcommand{\arraystretch}{1.5}\left\{\begin{array}{@{}l@{\quad}l@{}} z_\eta & \text{if } z\prec z_\eta, \\
z & \text{if } z\succeq z_\eta.
\end{array}\right.\kern-\nulldelimiterspace
\end{align}
In particular, using the definition of the functions $\pi_n$ in \eqref{eq_defpin}, the limit function $\pi$ must be equal to that defined in \eqref{eq_pi}. Note also that for all $z\in J(f)$, $\pi(z)\in \C \setminus S_R \subset \C \setminus \D_R$, and so the codomain of $\pi$ is indeed $J_R(f)$. We have then shown that $\pi$ is well-defined and continuous. 

Next, let $X\defeq \pi(J(f))$. Since $X$ is the continuous image of a Cantor bouquet under a function that maps each hair to an unbounded curve of itself, $X$ is a Cantor bouquet. By \eqref{eq_express_pi}, $\pi\vert_X$ is the identity map, and if $Q>R$ is a constant such that $S_R \Subset \D_Q$, then for any $w\in J_Q(f)$, $\pi(w)=w$, and so $J_Q(f) \subset X$. We have shown \ref{item:Xbouquet} and \ref{item:pi_id}.

Let $\eta$ be a hair of $J(f)$ and note that by Proposition \ref{prop_Ts}, $f\colon \eta\to \tilde{\eta}$ is a bijection for some hair $\tilde{\eta}$. By this and the definition of $z_{\tilde{\eta}}$ as the limit of $\{z_n(\tilde{\eta})\}_{n\in \N}$, exactly one of the following holds:
\begin{enumerate}[label=(\arabic*)]
\item  both $z_\eta,z_{\tilde{\eta}} \in \C \setminus S_R$ and $f(z_\eta)=z_{\tilde{\eta}}$; 
\item $z_\eta \in \partial S_R$ and $z_{\tilde{\eta}} \prec f(z_\eta)$. \label{item_2inproof}
\end{enumerate}
Moreover, in the latter case, by Proposition \ref{prop_cantorbouquet}, we have that $[z_{\tilde{\eta}},f(z_{\eta})]\subset f(\overline{S_R})$. Since $\pi\vert_X\equiv \id$, $f(\pi(z))\neq \pi(f(z))$ can only occur for $z\in S_R\cap \eta$ such that \ref{item_2inproof} holds. Thus, \ref{item:pi_SR} follows. 

Finally, for the last part of the statement, we have the additional assumption that there exists $L>0$ such that $S(f)\subset \D_L$ and $f^{-1}(\C\setminus \D_L)\subset \C\setminus \D_{e^{8\pi}L},$ and we aim to get for each $z\in J(f)$ an estimate on the Euclidean distance between $z$ and $\pi(z)$. Fix any $z\in J(f)$ and let $\eta$ be the hair it belongs to. If $z\succeq z_\eta$, then $\pi(z)=z$. Otherwise, by \eqref{eq_express_pi}, $\pi(z)=z_\eta$ and $z_\eta$ cannot be the endpoint of $\eta$. Thus, it must occur that $f^n(z_\eta)\in \partial S_R$ for some $n\geq 0$, and in particular,
\begin{equation}\label{eq_fin_projC}
f^n(z)\in (S_R\setminus\D_{L\e^{8\pi}}) \cap \{z: 0\leq\Rea(z)<Q\}
\end{equation}
by $J(f)$ being a straight brush. Let $F\colon\mathcal{T}_F \rightarrow \H_{\log L}$ be a logarithmic transform of $f$ such that $\mathcal{T}_F\subset \H_{\log L+8\pi}$, and let $w$ and $w_\eta$ be a pair of respective preimages of $z$ and $z_\eta$ under the exponential map, lying in the same connected component of $J(F)$. In particular, each hair of $J(f)$ must be lifted to a connected component of $J(F)$, and thus, $w,w_\eta \in J_\s(F)$ for some $\s \in \Addr(J(F))$. Moreover, recall from \eqref{eq_collection_Ki} that only finitely many pieces of tracts $\T_f$ intersect $S_R$, and so, $f^n(z)$ and $f^n(z_\eta)$ must belong to one of the compact sets in \eqref{eq_collection_Ki}. Hence, only finitely many of the \textit{different} logarithmic tracts $\T_F$ of $F$, that is, up to their $2\pi i$-translates, intersect $\exp^{-1}(S_R)$, and these intersections lie in the vertical strip $V(\log L +8\pi, Q)$. Since each of these pieces of logarithmic tracts must be disjoint from their $2\pi i$-translates, there exists an upper bound for the Euclidean diameter of any of them, say $\Delta>0$. Thus, by \eqref{eq_commute_log}, $F^n(w)$ and $F^n(w_\eta)$ belong to the same one of these pieces of tracts, and hence, $\vert F^n(w)-F^n(w_\eta)\vert \leq \Delta$. By this and Proposition \ref{prop_contraction}, 
\begin{equation}\label{eq_contractionG}
\vert w -w_\eta \vert \leq \frac{1}{2^n} \vert F^n(w)-F^n(w_\eta)\vert \leq \frac{\Delta}{2^n}<\Delta.
\end{equation}
Hence, $w_\eta \in \overline{V(\Rea w -\Delta, \Rea w+ \Delta )}$, and if $M\defeq \exp(\Delta)$, then 
$ z_\eta \in \overline{A(M^{-1}\vert z \vert, M \vert z \vert)}.$
\end{proof}
\section{The class $\CB$}\label{sec_CB}
Recall from the introduction that the class $\CB$ comprises all $f\in \B$ for which for $\lambda\in \C^\ast$ with small enough modulus, $J(\lambda f)$ is a Cantor bouquet. We start by studying the basic properties of functions in $\CB$.

Two maps $f,g\in \B$ are \textit{quasiconformally equivalent ($\sim$ near infinity)} if there exist quasiconformal maps $\phi, \psi: \C \rightarrow \C$ such that 
\begin{equation}\label{eq_qcfequiv}
\psi(g(z))=f(\phi(z))
\end{equation} 
for all $z\in \C$ ($\sim$ whenever $\vert g(z) \vert$ or $\vert f(\phi(z)) \vert$ is large enough). 
Moreover, for an entire function with bounded singular set, its \textit{parameter space} is the collection of all entire functions quasiconformally equivalent to it.

\begin{observation}[All functions in $\B$ have disjoint type maps in their parameter space] \label{prop_lambdadisjoint} Let $f\in \B$. Then, for all $\lambda\in \C^\ast$ with small enough modulus, the maps $h_\lambda\defeq \lambda f$ and $g_\lambda$ given by $g_\lambda(z)\defeq f(\lambda z)$ are of disjoint type and belong to the parameter space of $f$.
\end{observation}
\begin{proof}
The maps $h_\lambda$ and $g_\lambda$ are trivially quasiconformally equivalent to $f$; \eqref{eq_qcfequiv} holds for $g_\lambda$ by taking $\psi$ to be the identity map and $\phi$  as $z \mapsto \lambda z$, and for $h_\lambda$, \eqref{eq_qcfequiv} holds taking $\psi$ to be $z \mapsto z/\lambda$ and $\phi$ the identity map. Since $f \in \mathcal{B}$, we can choose $R>0$ such that $\lbrace S(f),0, f(0)\rbrace \subset \D_R$. For $\lambda\in \C$ with $\vert \lambda \vert$ sufficiently small, $f(\lambda\D_R) \subset \D_R$ and $\lambda f(\D_R)\subset \D_R$. Thus, for any such $\lambda$, $g_{\lambda}(\D_R)\subset \D_R$ and $h_{\lambda}(\D_R)\subset \D_R$. Moreover, for every $\lambda\in\C^{*}$, it is easy to see that $S(h_{\lambda})=\lambda S(f) \subset \D_R$ and $S(g_{\lambda})=S(f) \subset \D_R$. The combination of these two facts is enough to characterize 
$g_\lambda$ and $h_\lambda$ as disjoint type maps, see Proposition \ref{prop_disjoint}.
\end{proof}

Note that if two maps are quasiconformally equivalent to a third one, then they are quasiconformally equivalent to each other, since the composition of two quasiconformal maps is quasiconformal. In particular, all disjoint type maps in the parameter space of a function $f\in \B$ are pairwise quasiconformally equivalent, and the following result tells us that their dynamics are closely related.
\begin{thm}[Conjugacy between disjoint type maps {\cite[Theorem 3.1]{lasseRigidity}}]\label{thm_lasseR31} Any two quasiconformally equivalent disjoint type maps are conjugate on their Julia sets.
\end{thm}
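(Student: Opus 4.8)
The plan is to prove this by a \emph{Sullivan pullback} argument, the transcendental counterpart of the classical fact that hyperbolic rational maps in a common quasiconformal class are conjugate on their Julia sets; disjoint type is exactly the entire analogue of hyperbolicity, so the expansion needed for pullback is available. Fix quasiconformal homeomorphisms $\phi,\psi$ of $\C$ with $f\circ\phi=\psi\circ g$ near infinity.

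First I would set up a hyperbolic structure near the two Julia sets. Since $f$ and $g$ are of disjoint type, by \cite[Proposition 2.8]{helenaSemi} there are bounded open sets $U_f\supset P(f)$, $U_g\supset P(g)$ with $\overline{f(U_f)}\subset U_f$ and $\overline{g(U_g)}\subset U_g$; setting $W_f\defeq\C\setminus\overline{U_f}$ and $W_g\defeq\C\setminus\overline{U_g}$, one has $J(f)\subset W_f$, $f^{-1}(W_f)\subset W_f$, and $f\colon f^{-1}(W_f)\to W_f$ is an unbranched holomorphic covering (because $W_f$ misses $S(f)$), with $J(f)=\bigcap_{n\ge 0}f^{-n}(W_f)$; moreover, for the hyperbolic metric on $W_f$ there is $\Lambda>1$ with $\|\operatorname{D}f\|_{W_f}\ge\Lambda$ on $f^{-1}(W_f)$, and the same for $g$, see \cite[Lemma 5.1]{lasseRidigity}. (This is the structure already used in the proof of Theorem~\ref{thm_pi_cont}.)

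Then I would build the pullback sequence. Shrinking $U_f,U_g$ if necessary, assume $f\circ\phi=\psi\circ g$ holds on $g^{-1}(W_g)$ and that $\phi,\psi$ restrict to homeomorphisms $W_g\to W_f$ carrying tracts of $g$ to tracts of $f$. Put $\phi_0\defeq\phi$ and inductively let $\phi_{n+1}\colon W_g\to W_f$ be the lift of $\phi_n\circ g$ through the covering $f\colon f^{-1}(W_f)\to W_f$ that is consistent with the tract labelling carried by $\phi$, so that $f\circ\phi_{n+1}=\phi_n\circ g$ on $g^{-1}(W_g)$. Since precomposition with the holomorphic map $g$ and with holomorphic branches of $f^{-1}$ leaves $\|\mu\|_\infty$ unchanged, every $\phi_n$ is $K$-quasiconformal with the same $K$ as $\phi$. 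By standard distortion estimates for quasiconformal self-maps of $\widehat{\C}$ fixing $\infty$, $C\defeq\sup_{w\in W_g}d_{W_f}\big(\phi(w),\psi(w)\big)$ is finite. For $z\in J(g)$ the forward orbit remains in $J(g)\subset W_g$, and combining $f\circ\phi_{n+1}=\phi_n\circ g$, $f\circ\phi_0=\psi\circ g$ with the $\Lambda$-expansion, an iterated-expansion estimate as in \eqref{eq_pik} gives, once $n$ is large enough that the relevant points lie within a single inverse branch of $f$,
\[
d_{W_f}\!\big(\phi_{n+1}(z),\phi_n(z)\big)\ \le\ \Lambda^{-n}\,d_{W_f}\!\big(\phi_1(g^n z),\phi_0(g^n z)\big)\ \le\ \Lambda^{-n}C.
\]
Hence $(\phi_n|_{J(g)})_n$ is uniformly Cauchy and converges to a continuous map $\Phi\colon J(g)\to\overline{W_f}$; letting $n\to\infty$ in $f\circ\phi_{n+1}=\phi_n\circ g$ gives $f\circ\Phi=\Phi\circ g$ on $J(g)$, and since $\Phi$ respects the tract/covering structure, $\Phi(J(g))\subseteq\bigcap_n f^{-n}(W_f)=J(f)$. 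Repeating the construction with $f$ and $g$ interchanged (using $\phi^{-1},\psi^{-1}$) produces a continuous $\Psi\colon J(f)\to J(g)$ with $g\circ\Psi=\Psi\circ f$; both $\Psi\circ\Phi$ and $\Phi\circ\Psi$ commute with the respective dynamics and stay at bounded hyperbolic distance from the identity, so the $\Lambda$-expansion forces them to be the identity. Thus $\Phi$ is the desired homeomorphism conjugating $g|_{J(g)}$ to $f|_{J(f)}$.

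The hard part is the lifting step: one must check that the pullbacks $\phi_{n+1}$ are genuine single-valued quasiconformal homeomorphisms of $W_g$ onto $W_f$, not just germs of inverse branches. This requires the combinatorial data recorded by $\phi$ --- which tract of $g$ maps to which tract of $f$, and with what boundary identification --- to be compatible with the coverings $f\colon f^{-1}(W_f)\to W_f$ and $g\colon g^{-1}(W_g)\to W_g$, so that $\phi_n\circ g$ actually lifts and the lift is again a homeomorphism onto $W_f$. This is exactly where one needs $\phi,\psi$ to form a quasiconformal \emph{equivalence} respecting the tract structure near infinity, rather than an arbitrary pair of quasiconformal maps; the finiteness of $C$ above is a secondary point of the same nature.
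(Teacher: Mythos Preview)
The paper does not prove this statement; it is quoted verbatim from \cite[Theorem~3.1]{lasseRidigity} and used as a black box. Your outline---Sullivan-type pullback of the equivalence, convergence forced by uniform hyperbolic expansion on $W_f$, then symmetry to obtain a two-sided inverse---is precisely the strategy of the cited reference.

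Two places where Rempe's execution differs from your sketch are worth flagging. First, the normalisation that $\phi,\psi$ restrict to homeomorphisms $W_g\to W_f$ respecting the tract structure (which you obtain by ``shrinking $U_f,U_g$ if necessary'') is not free: nothing in the definition of quasiconformal equivalence forces $\phi(W_g)=W_f$. In \cite{lasseRidigity} this is handled by passing to logarithmic coordinates and performing an explicit interpolation/isotopy step, of the same flavour as Proposition~\ref{prop_letho} in the present paper. Second, your finiteness of $C=\sup_{w\in W_g}d_{W_f}(\phi(w),\psi(w))$ via ``standard distortion for maps fixing $\infty$'' is delicate in the hyperbolic metric of $W_f$; in the cited proof the estimates are carried out after lifting to $\B_{\log}$, where the relevant quasiconformal maps commute with $z\mapsto z+2\pi i$ and are therefore at uniformly bounded \emph{Euclidean} distance from a translation, which makes both the initial bound and the iterated contraction transparent. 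Your final paragraph correctly identifies the lifting step as the genuine content.
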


In particular, a conjugacy between Julia sets implies that the topological structure of the sets must be the same. Even if in general, this does not necessarily imply that their embeddings in the plane are the same (i.e., they might not be ambiently homeomorphic), it follows from the proof of Theorem \ref{thm_lasseR31} that the map that conjugates two functions as in Theorem \ref{thm_lasseR31} is an ambient homeomorphism.
\begin{cor}\label{cor_allJg_CB} Let $f\in \B$. Then any two disjoint type maps on its parameter space are conjugate on their Julia sets, and hence these sets are homeomorphic. 
\end{cor}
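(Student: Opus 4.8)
The plan is to deduce this immediately from Theorem~\ref{thm_lasseR31} together with the transitivity of quasiconformal equivalence, so essentially all the work has already been done. First I would recall that, by Proposition~\ref{prop_lambdadisjoint}, the parameter space of any $f\in\B$ genuinely contains disjoint type maps (for instance $\lambda f$ with $|\lambda|$ small), so the statement is non-vacuous; but the logical content only concerns two given disjoint type maps. So I would fix disjoint type maps $g_1,g_2$ in the parameter space of $f$. By definition of the parameter space, each $g_i$ is quasiconformally equivalent to $f$: there are quasiconformal homeomorphisms $\phi_i,\psi_i\colon\C\to\C$ with $\psi_i\circ f=g_i\circ\phi_i$, i.e. $g_i=\psi_i\circ f\circ\phi_i^{-1}$ (or the same identities valid only where $|f(z)|$ is large, in the ``$\sim$ near infinity'' variant).

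Next I would combine these two relations to produce a quasiconformal equivalence directly between $g_1$ and $g_2$. From $g_1=\psi_1\circ f\circ\phi_1^{-1}$ we get $f=\psi_1^{-1}\circ g_1\circ\phi_1$, and substituting this into $g_2=\psi_2\circ f\circ\phi_2^{-1}$ yields
\begin{equation*}
g_2=(\psi_2\circ\psi_1^{-1})\circ g_1\circ(\phi_1\circ\phi_2^{-1}).
\end{equation*}
Setting $\Psi\defeq\psi_2\circ\psi_1^{-1}$ and $\Phi\defeq\phi_1\circ\phi_2^{-1}$, both of which are quasiconformal because the class of quasiconformal self-maps of $\C$ is closed under composition and inversion, we obtain $\Psi\circ g_1=g_2\circ\Phi$; hence $g_1$ and $g_2$ are quasiconformally equivalent. (In the ``near infinity'' version one checks the identity holds wherever the relevant moduli are large, which suffices since the notion in Theorem~\ref{thm_lasseR31} is stated for precisely this weaker equivalence.)

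Finally I would invoke Theorem~\ref{thm_lasseR31}: two quasiconformally equivalent disjoint type maps are conjugate on their Julia sets. Thus there is a homeomorphism $h\colon J(g_1)\to J(g_2)$ with $h\circ g_1=g_2\circ h$ on $J(g_1)$; in particular $J(g_1)$ and $J(g_2)$ are homeomorphic, which is the assertion. I would also remark, as noted after Theorem~\ref{thm_lasseR31}, that $h$ may in fact be taken to be an ambient homeomorphism, so the embeddings of $J(g_1)$ and $J(g_2)$ in the plane coincide as well. The only point requiring any care — and it is minor — is the bookkeeping of composing the quasiconformal maps and the handling of the ``near infinity'' variant of quasiconformal equivalence; there is no substantive obstacle, since the hard analytic content is entirely contained in the cited Theorem~\ref{thm_lasseR31}.
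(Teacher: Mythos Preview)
Your proposal is correct and follows exactly the paper's approach: the paper observes just before the corollary that quasiconformal equivalence is transitive (since compositions of quasiconformal maps are quasiconformal), so any two disjoint type maps in the parameter space of $f$ are quasiconformally equivalent to each other, and then Theorem~\ref{thm_lasseR31} yields the conjugacy on Julia sets. The only cosmetic discrepancy is the labelling convention for the quasiconformal maps in the equivalence relation, which is immaterial.
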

This corollary implies that in order to determine if a function $f\in \B$ belongs to $\CB$, it is enough to check the topological structure of the Julia set of any disjoint type map quasiconformally equivalent to it:

\begin{prop}[Class $\CB$]\label{prop_classCB} An entire function $f\in \B$ belongs to $\CB$ if and only if the Julia set of any (and thus all) disjoint type function on its parameter space is a Cantor bouquet.
\end{prop}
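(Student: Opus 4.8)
The plan is to deduce the proposition from three facts already established: Proposition~\ref{prop_lambdadisjoint}, which guarantees that $\lambda f$ is a disjoint type map lying in the parameter space of $f$ whenever $|\lambda|$ is small enough; Corollary~\ref{cor_allJg_CB}, which asserts that any two disjoint type maps in the parameter space of $f$ are conjugate on their Julia sets; and the strengthening recorded just before that corollary, namely that such a conjugacy is the restriction to the Julia sets of an ambient homeomorphism of $\C$. Since being a \emph{Cantor bouquet} is by definition an ambient notion --- a subset of $\C$ that is ambiently homeomorphic to a straight brush --- it is preserved by ambient homeomorphisms of $\C$, and this is precisely the feature that lets us transport the property between the Julia sets of distinct disjoint type maps in the parameter space of $f$.

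For the implication from $\CB$ to the statement about disjoint type maps, assume $f\in\CB$. By definition there is $\lambda_0\in\C^\ast$ with $|\lambda_0|$ so small that $J(\lambda_0 f)$ is a Cantor bouquet, and by Proposition~\ref{prop_lambdadisjoint} the map $\lambda_0 f$ is then of disjoint type and belongs to the parameter space of $f$. Let $g$ be an arbitrary disjoint type map in the parameter space of $f$. By Corollary~\ref{cor_allJg_CB} and the fact, recorded just before it, that this conjugacy is the restriction of an ambient homeomorphism of $\C$, there is a homeomorphism $\Phi\colon\C\to\C$ with $\Phi\big(J(\lambda_0 f)\big)=J(g)$. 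Composing $\Phi^{-1}$ with an ambient homeomorphism of $\C$ witnessing that $J(\lambda_0 f)$ is a Cantor bouquet then produces an ambient homeomorphism carrying $J(g)$ onto a straight brush; hence $J(g)$ is a Cantor bouquet. Since $g$ was arbitrary, the Julia set of every disjoint type map in the parameter space of $f$ is a Cantor bouquet.

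For the converse, suppose the Julia set of some disjoint type map $g_0$ in the parameter space of $f$ is a Cantor bouquet. Running the transport argument of the previous paragraph with $g_0$ in the role of $\lambda_0 f$, the same holds for every disjoint type map in the parameter space of $f$; in particular, by Proposition~\ref{prop_lambdadisjoint}, $J(\lambda f)$ is a Cantor bouquet for all sufficiently small $|\lambda|$. This yields $f\in\CB$ and, simultaneously, the assertion of the footnote in the definition of $\CB$ that the defining condition, once it holds for a single small $|\lambda|$, holds for all of them.

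The argument involves no genuine analytic difficulty; it is entirely a matter of combining the cited results correctly. The single point I would take care to stress --- and the likely source of confusion --- is that a topological conjugacy on the Julia sets, on its own, does \emph{not} suffice here: being a Cantor bouquet depends on how the set sits inside $\C$, so one truly needs that the conjugacy furnished by Theorem~\ref{thm_lasseR31} extends to an ambient homeomorphism of the plane. That is the only place where the reference to the proof of Theorem~\ref{thm_lasseR31}, rather than merely its statement, is used.
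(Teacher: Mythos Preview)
Your proposal is correct and follows precisely the route the paper intends: the proposition is stated there without an explicit proof, being presented as an immediate consequence of Proposition~\ref{prop_lambdadisjoint}, Corollary~\ref{cor_allJg_CB}, and the remark preceding that corollary that the conjugacy from Theorem~\ref{thm_lasseR31} is in fact an ambient homeomorphism. Your write-up makes this deduction explicit and correctly highlights the one subtle point, namely that mere topological conjugacy on the Julia sets would not suffice to transport the Cantor bouquet property.
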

Many functions in $\CB$ have been studied before, e.g., \cite{dierkCosine, Schleicher_Zimmer_exp, Baranski_Trees,RRRS, lasseBrushing}. To illustrate this, we gather together, up to date and to the author's knowledge, classes of functions appearing on the literature that are known to be in~$\CB$. For the definition of a logarithmic transform satisfying a \textit{head-start condition}, see {\cite[Definition~4.1]{RRRS}}.

\begin{prop}[Sufficient conditions for functions in $\CB$]\label{prop_resultsCB} A transcendental entire function $f\in \B$ is in class $\CB$ if one of the following holds:
\begin{enumerate}[label=(\Alph*)]
\item \label{itemA_exCB} $f$ is a finite composition of functions of finite order,
\item \label{itemB_exCB} there exists a logarithmic transform $F$ of $f$ that satisfies a linear head-start condition.
\item \label{itemC_exCB} there exists a logarithmic transform $G$ of a disjoint type function $g(z) \defeq f(\lambda z)$ for $\lambda \in \C ^\ast$ small enough, such that $G$ satisfies a uniform head-start condition.
\end{enumerate}
\end{prop}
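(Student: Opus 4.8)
The plan is to deduce $f\in\CB$ from each of (A), (B), (C) by the same device: by Proposition~\ref{prop_classCB} (with Corollary~\ref{cor_allJg_CB}) it suffices to exhibit \emph{one} disjoint type map in the parameter space of $f$ with Cantor bouquet Julia set, and by Proposition~\ref{prop_lambdadisjoint} the map $g_\lambda(z)\defeq f(\lambda z)$ serves as the candidate for all small $\vert\lambda\vert$. Case (C) is then immediate: by hypothesis $g_\lambda$ is disjoint type and carries a logarithmic transform satisfying a uniform head-start condition, so $J(g_\lambda)$ is a Cantor bouquet by \cite[Corollary~6.3]{lasseBrushing} (quoted just before Definition~\ref{def_UHSC}), and Proposition~\ref{prop_classCB} gives $f\in\CB$. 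It therefore remains to reduce (A) and (B) to (C).

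\textbf{(A) $\Rightarrow$ (C).} The class of finite compositions of transcendental entire functions of finite order is closed under the affine rescaling $h\mapsto h(\lambda z)$: replacing the innermost factor $h_k$ of $f=h_1\circ\cdots\circ h_k$ by $z\mapsto h_k(\lambda z)$ alters neither its membership in $\mathcal{B}$ nor its order. Hence $g_\lambda$ is again such a composition, so by \cite{RRRS} it admits a logarithmic transform satisfying a \emph{linear}, hence uniform, head-start condition; since $g_\lambda$ is disjoint type for small $\vert\lambda\vert$, this is precisely hypothesis (C) for $f$.

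\textbf{(B) $\Rightarrow$ (C).} Suppose a logarithmic transform $F\colon\mathcal{T}_F\to\H_{\log L}$ of $f$ satisfies a linear head-start condition, and fix $\vert\lambda\vert$ small enough that $g_\lambda$ is disjoint type. Under the logarithmic change of variables, composing $f$ with $z\mapsto\lambda z$ becomes composing $F$ with the translation $z\mapsto z+\Log\lambda$: by \eqref{eq_commute_log}, $g_\lambda(\exp w)=f(\exp(w+\Log\lambda))=\exp(F(w+\Log\lambda))$, so $G(w)\defeq F(w+\Log\lambda)$ is a logarithmic transform of $g_\lambda$, with tracts $\mathcal{T}_G=\mathcal{T}_F-\Log\lambda$ --- shifted far to the right for small $\vert\lambda\vert$, which is exactly why $g_\lambda$ is disjoint type. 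It then remains to transfer the defining inequalities of the head-start condition from $F$ to $G$; since the translation acts affinely on real parts, a linear $\phi$ for $F$ should pass, after conjugation by that translation, to a linear $\phi$ for $G$. This transfer is of the same nature as --- and can be carried out by the methods of --- the analysis in \cite{RRRS} of the independence of the head-start condition from the auxiliary choices (the disk $\D_L$, the logarithmic transform) involved. The resulting head-start condition for $G$ is hypothesis (C), completing the reduction.

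\textbf{Expected main obstacle.} All of the difficulty sits in the last transfer step within (B)$\Rightarrow$(C): verifying that the head-start condition genuinely survives the passage $f\rightsquigarrow f(\lambda z)$. Concretely, one must match the tracts of $F$ with those of the translate $G$ and re-examine the implication in Definition~\ref{def_UHSC}; the subtlety is that translating the tracts is not a symmetry of the codomain half-plane, and the remedy is to take $\vert\lambda\vert$ small enough --- equivalently, to work with deep enough tracts --- that the translated configuration lies inside the original one. Once that bookkeeping is done, (A), (B), and (C) all collapse onto the single clean statement (C).
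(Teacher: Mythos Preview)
Your argument coincides with the paper's: both establish the chain (A) $\Rightarrow$ (B) $\Rightarrow$ (C) $\Rightarrow f\in\CB$, both construct the logarithmic transform of $g_\lambda$ as the translate $G(w)\defeq F(w+\Log\lambda)$, and both invoke \cite[Corollary~6.3]{lasseBrushing} for the final implication. Your direct route (A) $\Rightarrow$ (C), observing that $g_\lambda$ is again a finite composition of finite-order maps and applying \cite{RRRS} to $g_\lambda$ rather than to $f$, is an equivalent rearrangement of the paper's (A) $\Rightarrow$ (B).

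The only real difference is emphasis. The paper dispatches the transfer of the linear head-start condition from $F$ to $G$ in one clause (``since $F$ satisfies a linear head-start condition, by its definition, $G$ also satisfies a linear head-start condition''), while you flag it as the expected main obstacle and leave it as a sketch. You are right that a check is required and right that linearity of $\phi$ is what makes it work, but you mis-identify the remedy: it is \emph{not} a matter of shrinking $|\lambda|$ further so that ``the translated configuration lies inside the original one''. Rather, the tracts of $G$ are horizontal translates of those of $F$, and the geometric content of a linear head-start condition --- uniformly bounded slope and bounded wiggling of the tracts, as characterised in \cite{RRRS} --- is manifestly preserved under translation; equivalently, an affine $\phi$ interacts with a real translation only through its constant term. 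No smallness of $|\lambda|$ beyond that needed to make $g_\lambda$ disjoint type enters. With that two-line observation replacing your hedged final paragraph, your proof is complete and matches the paper's.
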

\begin{proof}
If $f\in \B$ is a finite composition of functions of finite order, then by \cite[Theorem~5.6 and Lemma~5.7]{RRRS}, there exists a logarithmic transform $F\colon \T_F \rightarrow \H_{\log L}$ of $f$ satisfying a linear head-start condition for some $\phi$. In this case, the map
$$G: \T_G=(\T_F-\log \lambda)\rightarrow \H_{\log L} \quad \text{ given by } \quad G(w)\defeq F(w +\log\lambda)$$
is a logarithmic transform of the map $g$ given by $g(z)\defeq f(\lambda z)$. This is because we have $ \e^{F(w +\log\lambda)}=f(\e^{w +\log\lambda})=f(\lambda\e^{w})=g(\e^{w})=\e^{G(w)}$. Moreover, since $F$ satisfies a linear head-start condition, by its definition, $G$ also satisfies a linear (in particular uniform) head-start condition, and for $\lambda$ sufficiently small, by Observation \ref{prop_lambdadisjoint} and Proposition \ref{prop_disjoint}, $G$ and $g$ are of disjoint type. By \cite[Corollary 6.3]{lasseBrushing}, since $G$ satisfies a uniform head-start condition and is of disjoint type, both $J(G)$ and $J(g)$ are Cantor bouquets. We have shown that $\ref{itemA_exCB}\Rightarrow \ref{itemB_exCB} \Rightarrow \ref{itemC_exCB}\Rightarrow f \in \CB$, and so the statement follows.
\end{proof}

Our next goal is to show that  class $\CB$ is closed under iteration. This will be a consequence of Proposition \ref{prop_qcinfty} on composition of maps that are quasiconformally equivalent near infinity. We use in our proof some tools from quasiconformal maps gathered in \cite[Section 2]{lasseRigidity}. We refer to \cite{lehto_quasiconformal, vuorinen2006conformal} for definitions. 
In particular, we will make use of the following auxiliary result regarding interpolation of quasiconformal maps on annuli. The original source is \cite{lehto}, but we present in the next proposition the reformulation given in \cite[Proposition 2.11]{lasseRigidity}.
\begin{prop}[Interpolation of quasiconformal maps on annuli {\cite{lehto}}]\label{prop_letho} Let $A,B\subset \C$ be two bounded annuli, each bounded by two Jordan curves. Suppose that $\psi, \varphi \colon\C \rightarrow \C$ are quasiconformal maps such that $\psi$ maps the inner boundary $\alpha^-$ of $A$ to the inner boundary $\beta^-$ of $B$, and $\varphi$ takes the outer boundary $\alpha^+$ of $A$ to the outer boundary $\beta^+$ of $B$. Then there is a quasiconformal map $\tilde{\varphi} \colon\C \rightarrow \C$ that agrees with $\psi$ on the bounded component of $\C\setminus A$ and with $\varphi$ on the unbounded component of $\C\setminus A$.
\end{prop}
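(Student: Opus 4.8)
The plan is to reduce the statement to constructing a single quasiconformal homeomorphism $G\colon\overline{A}\to\overline{B}$ of the closed annuli that agrees with $\psi$ on the inner boundary $\alpha^-$ and with $\varphi$ on the outer boundary $\alpha^+$, and then to glue. Write $D^-$ (resp.\ $D^+$) for the bounded (resp.\ unbounded) component of $\C\setminus A$, so that $\partial D^-=\alpha^-$ and $\partial D^+=\alpha^+$, and write $E^\pm$ for the analogous components of $\C\setminus B$. Since quasiconformal self-maps of $\C$ are homeomorphisms fixing $\infty$, we have $\psi(\overline{D^-})=\overline{E^-}$ and $\varphi(\overline{D^+})=\overline{E^+}$. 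Hence, given such a $G$, the map equal to $\psi$ on $\overline{D^-}$, to $G$ on $\overline{A}$, and to $\varphi$ on $\overline{D^+}$ is well defined, it is a homeomorphism of $\C$ (bijective, continuous and proper), and it satisfies the required agreement properties; only its quasiconformality will need checking.

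The heart of the matter is the construction of $G$, and the guiding idea is to never glue along the \emph{arbitrary} Jordan curves $\alpha^\pm$. First I would fix \emph{analytic} Jordan curves $\gamma^-,\gamma^+\subset A$, with $\gamma^-$ separating $\alpha^-$ from $\gamma^+$ and $\gamma^+$ separating $\gamma^-$ from $\alpha^+$ --- for instance as $\Phi^{-1}$-images of round circles under a conformal uniformization $\Phi$ of $A$ onto a round annulus, extended to the boundary by Carath\'eodory's theorem --- chosen so close to $\alpha^-$, respectively $\alpha^+$, that the Jordan curves $\psi(\gamma^-)$ and $\varphi(\gamma^+)$ still lie inside $B$, are disjoint, and are nested so that the region $B^0$ between them is an annulus separating $\beta^-$ from $\beta^+$; this is possible by continuity of $\psi,\varphi$ together with $\psi(\alpha^-)=\beta^-$ and $\varphi(\alpha^+)=\beta^+$. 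Then set $G\defeq\psi$ on the thin sub-annulus $A^-$ bounded by $\alpha^-$ and $\gamma^-$, set $G\defeq\varphi$ on the thin sub-annulus $A^+$ bounded by $\gamma^+$ and $\alpha^+$, and let $G$ restricted to the middle annulus $A^0$ bounded by $\gamma^-$ and $\gamma^+$ be any quasiconformal homeomorphism onto $B^0$ extending the boundary homeomorphisms $\psi|_{\gamma^-}$ and $\varphi|_{\gamma^+}$. To build this last map I would uniformize $A^0$ and $B^0$ conformally onto round annuli: since $\gamma^\pm$ are analytic and $\psi(\gamma^-),\varphi(\gamma^+)$ are quasicircles (images of analytic curves under global quasiconformal maps), these uniformizations extend quasiconformally across the boundary circles, so the conjugated boundary homeomorphisms are \emph{quasisymmetric}, and one then invokes the classical fact that a round annulus carries a quasiconformal homeomorphism onto another round annulus restricting to prescribed quasisymmetric maps on its two boundary circles (e.g.\ in logarithmic coordinates, scale affinely in the radial variable and interpolate linearly between the two boundary homeomorphisms in the angular variable, as in the Beurling--Ahlfors extension); transporting this map back yields $G|_{A^0}$.

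Finally, to verify that $\tilde\varphi$ is quasiconformal: by construction $\tilde\varphi=\psi$ on the open Jordan domain bounded by $\gamma^-$, namely $D^-\cup\alpha^-\cup A^-$, and $\tilde\varphi=\varphi$ on the open domain bounded by $\gamma^+$ containing $\infty$, so no dilatation bound is lost across the original curves $\alpha^\pm$ --- the point of extending $\psi,\varphi$ a little way into $A$; hence $\tilde\varphi$ is $K'$-quasiconformal on $\C\setminus(\gamma^-\cup\gamma^+)$ for a suitable $K'$. As $\gamma^\pm$ are analytic Jordan curves, they are removable for quasiconformal maps, so $\tilde\varphi$ is globally $K'$-quasiconformal. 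The main obstacle is precisely what this design circumvents: because $\alpha^\pm$ carry no regularity, one can neither quasiconformally extend $\psi|_{\alpha^-}$ and $\varphi|_{\alpha^+}$ directly into $A$ (their conjugates on a model round annulus need not be quasisymmetric) nor appeal to removability of $\alpha^\pm$ (a general Jordan curve need not be removable for quasiconformal maps); both difficulties dissolve once the artificial interpolation is confined to the interior of $A$ and performed across hand-picked analytic curves, while the full maps $\psi,\varphi$ --- not merely their boundary traces --- are used on collars of $\alpha^\pm$.
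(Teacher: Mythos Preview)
The paper does not give a proof of this proposition: it is quoted from \cite{lehto} (in the reformulation of \cite[Proposition~2.11]{lasseRidigity}) and used as a black box, so there is no ``paper's own proof'' to compare against.

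Your outline is correct and is essentially the standard argument one finds in Lehto's work. The key idea --- which you identify clearly --- is that one must \emph{not} attempt to interpolate directly across the given Jordan curves $\alpha^\pm$, since these carry no regularity and hence are neither known to be removable for quasiconformal maps nor to give quasisymmetric boundary values after uniformization. Your device of introducing auxiliary analytic curves $\gamma^\pm\subset A$, extending $\psi$ and $\varphi$ unchanged across the collars $A^\pm$, and confining the genuine interpolation to the compactly contained middle annulus $A^0$ (where all four boundary curves are now quasicircles, the induced boundary maps are quasisymmetric, and the Beurling--Ahlfors-type extension on a round model applies) is exactly the mechanism that makes the proof go through. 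The final appeal to removability of the analytic curves $\gamma^\pm$ is also correct. One small point worth making explicit when you write this up: the assertion that the conformal map of $B^0$ onto a round annulus extends quasiconformally across its boundary quasicircles $\psi(\gamma^-),\varphi(\gamma^+)$ deserves a reference or a line of justification (e.g.\ via the quasiconformal reflection characterisation of quasicircles), since this is where the quasisymmetry of the conjugated boundary data comes from.
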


\begin{prop}[Composition of quasiconformally equivalent maps]\label{prop_qcinfty} Let $f_1,g_1\in\B$ be quasiconformally equivalent near infinity to $f_2$ and $g_2$, respectively. Then, $f_1\circ g_1$ is quasiconformally equivalent near infinity to $f_2 \circ g_2$.
\end{prop}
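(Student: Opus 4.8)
The plan is to assemble the quasiconformal maps witnessing $f_1\circ g_1\sim f_2\circ g_2$ near infinity out of those given for $f_1\sim f_2$ and $g_1\sim g_2$, together with one application of Proposition~\ref{prop_letho} to repair the single place where a naive composition fails. Write the hypotheses as $\psi_f\circ f_2=f_1\circ\phi_f$ and $\psi_g\circ g_2=g_1\circ\phi_g$ near infinity, for quasiconformal maps $\phi_f,\psi_f,\phi_g,\psi_g\colon\C\to\C$ as in \eqref{eq_qcfequiv}. The obvious guess is $\Phi\defeq\phi_g$, $\Psi\defeq\psi_f$: then for $z$ near infinity one gets $f_1\bigl(g_1(\phi_g(z))\bigr)=f_1\bigl(\psi_g(g_2(z))\bigr)$, which should equal $\psi_f\bigl(f_2(g_2(z))\bigr)=f_1\bigl(\phi_f(g_2(z))\bigr)$. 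Hence we would be done once $\psi_g$ and $\phi_f$ agreed on the set of values $g_2$ takes near infinity, which is a neighbourhood of $\infty$. In general they do not; the point is that, since both equivalences are only required to hold near infinity, we may replace $\psi_g$, compensating by a replacement of $\phi_g$, so that this agreement is forced.

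First I fix $L>0$ with $S(f_i)\cup S(g_i)\subset\D_L$ for $i=1,2$ and such that the two equivalences hold for iterates of modulus exceeding $L$, and recall the standard fact \cite{eremenkoclassB} that for $h\in\B$ with $S(h)\subset\D_L$ the components of $h^{-1}(\C\setminus\overline{\D}_L)$ --- the \emph{tracts} of $h$ --- are simply connected and $h$ restricts on each of them to a covering onto $\C\setminus\overline{\D}_L$. Choosing $L'\gg L$ so large that the Jordan curve $\psi_g(\partial\D_L)$ lies in the bounded complementary component of the Jordan curve $\phi_f(\partial\D_{L'})$, I apply Proposition~\ref{prop_letho} to the round annulus $A\defeq\{L<|w|<L'\}$ and the annulus $B$ bounded inside by $\psi_g(\partial\D_L)$ and outside by $\phi_f(\partial\D_{L'})$, using the map $\psi_g$ on the inner boundary and $\phi_f$ on the outer boundary. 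This produces a quasiconformal map $\widetilde\psi_g\colon\C\to\C$ with $\widetilde\psi_g=\psi_g$ on $\overline{\D}_L$ and $\widetilde\psi_g=\phi_f$ on $\C\setminus\D_{L'}$.

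Next I replace $\phi_g$ by a quasiconformal $\widetilde\phi_g\colon\C\to\C$ with $g_1\circ\widetilde\phi_g=\widetilde\psi_g\circ g_2$ on the ``deep'' tracts $g_2^{-1}(\C\setminus\overline{\D}_{L'})$ of $g_2$. There $\widetilde\psi_g\circ g_2=\phi_f\circ g_2=\rho\circ g_1\circ\phi_g$, where $\rho\defeq\phi_f\circ\psi_g^{-1}$, so it is enough to take $\widetilde\phi_g=\sigma\circ\phi_g$ on the corresponding deep tracts of $g_1$, where $\sigma$ is a quasiconformal \emph{lift} of $\rho$ through $g_1$, that is, $g_1\circ\sigma=\rho\circ g_1$. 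On each tract $T$ of $g_1$ such a $\sigma$ exists by the covering homotopy lifting criterion: $T$ is simply connected, and both $\rho\circ g_1|_T$ and $g_1$ are coverings onto a suitable round neighbourhood of $\infty$ lying in $\rho(\C\setminus\overline{\D}_L)$ and avoiding $S(g_1)$; and the lift is quasiconformal because locally it is the composition of $\rho$ with conformal branches of $g_1$ and of $g_1^{-1}$. Off the deep tracts, where $\widetilde\psi_g\circ g_2=\psi_g\circ g_2$ already equals $g_1\circ\phi_g$, I keep $\widetilde\phi_g=\phi_g$, and glue the two prescriptions across the transition regions of the tracts by a further application of Proposition~\ref{prop_letho}, after passing to logarithmic coordinates so that those regions become bounded topological annuli. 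This yields a global quasiconformal homeomorphism $\widetilde\phi_g$ of $\C$.

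Finally, set $\Phi\defeq\widetilde\phi_g$ and $\Psi\defeq\psi_f$. If $|(f_2\circ g_2)(z)|$ is large enough then $g_2(z)$ lies so deep in a tract of $f_2$ that $|g_2(z)|>L'$ --- here one uses that $\{w\colon|f_2(w)|>R\}\subset\C\setminus\overline{\D}_{L'}$ for $R$ large, by continuity of the entire map $f_2$ --- and therefore
\[
f_1\bigl(g_1(\widetilde\phi_g(z))\bigr)=f_1\bigl(\widetilde\psi_g(g_2(z))\bigr)=f_1\bigl(\phi_f(g_2(z))\bigr)=\psi_f\bigl(f_2(g_2(z))\bigr),
\]
using in turn the defining relation for $\widetilde\phi_g$, the identity $\widetilde\psi_g=\phi_f$ on $\C\setminus\D_{L'}$, and $\psi_f\circ f_2=f_1\circ\phi_f$. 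Since $f_i\circ g_i$ is transcendental entire and the singular set of a composition of class-$\B$ maps is again bounded, $f_i\circ g_i\in\B$, so this exhibits $f_1\circ g_1$ and $f_2\circ g_2$ as quasiconformally equivalent near infinity. The step I expect to be the main obstacle is the construction of $\widetilde\phi_g$ in the third paragraph: dealing uniformly with tracts of infinite degree (logarithmic singularities), where $g_1^{-1}\circ\rho\circ g_1$ is multivalued and only the lifting criterion rescues it; checking quasiconformality of the lift; and, above all, carrying out the gluing across the transition regions, which forces the passage to logarithmic coordinates and a careful bookkeeping of the hierarchy $L\ll L'\ll\cdots$ of constants. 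Everything else is formal manipulation of the relations \eqref{eq_qcfequiv}.
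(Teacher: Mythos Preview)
Your overall architecture is the same as the paper's: build $\widetilde\psi_g$ by one application of Proposition~\ref{prop_letho} so that it agrees with $\psi_g$ near the origin and with $\phi_f$ near infinity, and then replace $\phi_g$ by a lifted map $\widetilde\phi_g$ satisfying the corresponding semiconjugacy with $\widetilde\psi_g$. The difference is in how you perform the lift, and your version has a real gap at exactly the step you flag as the main obstacle.

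You lift $\rho=\phi_f\circ\psi_g^{-1}$ through $g_1$ only on the \emph{deep} parts of the tracts, set $\widetilde\phi_g=\phi_g$ outside, and then propose to interpolate across the ``transition regions'' by a second use of Proposition~\ref{prop_letho} ``after passing to logarithmic coordinates so that those regions become bounded topological annuli.'' They do not. In a single tract $T$ of $g_1$, the set $\{z\in T:L<|g_1(z)|<L'\}$ is the preimage of a round annulus under a universal covering, hence a simply connected strip; in logarithmic coordinates it is a topological quadrilateral bounded by two arcs of $\partial T$ and two cross-cuts. Proposition~\ref{prop_letho} interpolates across annuli with two boundary curves, not across quadrilaterals with four boundary arcs where you must match two different maps on opposite sides and nothing is prescribed on the other two. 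So the second interpolation, as stated, does not go through.

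The paper avoids this entirely. Instead of lifting only on the deep parts and then patching, it lifts the \emph{whole} interpolated map $\widetilde\psi_g$ on each full component $C$ of the tract region: since $\widetilde\psi_g$ is isotopic to $\psi_g$ relative to the inner boundary circle (this is the content of \eqref{eq_isotopy}), the lift of $\widetilde\psi_g\circ g_1$ through the covering $g_2$ can be chosen to coincide with $\phi_g$ on $\partial C$. One then simply sets $\widetilde\phi_g=\phi_g$ on $\C\setminus B(R_g)$; the two definitions match on $\partial C$ by construction, and a single application of the quasiconformal gluing lemma (\cite[Proposition~2.10]{lasseRidigity}) gives global quasiconformality. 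No second interpolation is needed. If you rework your third paragraph along these lines---lift on the full component using the isotopy rather than only on the deep part---the rest of your argument goes through unchanged.
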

\begin{proof}
By assumption, there exist quasiconformal maps
$\phi_f,\phi_g, \psi_g, \psi_f: \C \rightarrow \C$ such that 
\begin{equation}\label{eq_defqcinf}
\psi_f(f_1(z))=f_2(\phi_f(z)) \qquad \text{ and } \qquad \psi_g(g_1(z))=g_2(\phi_g(z)) 
\end{equation} 
whenever $\max \{\vert f_1(z)\vert , \vert f_2(\phi_f(z)) \vert\}>R_f $ and $\max \{\vert g_1(z)\vert , \vert g_2(\phi_g(z)) \vert\}>R_g $ for some fixed $R_f, R_g>0.$ 
Equivalently, the semiconjugacies between the functions $f_1$ and $f_2$, and $g_1$ and $g_2$, are respectively defined in the sets
\begin{equation}\label{a}
\begin{split}
A(R_f) &\defeq f_1^{-1}( \C \setminus \D_{R_f}) \cup \phi_f^{-1}(f_2^{-1}( \C \setminus \D_{R_f})) \quad \text{ and } \\
B(R_g)&\defeq g_1^{-1}( \C \setminus \D_{R_g}) \cup \phi_g^{-1}(g_2^{-1}( \C \setminus \D_{R_g})).
\end{split}
\end{equation}
By increasing the constant $R_f$, we can assume that $A(R_f)\Subset \C \setminus (\D_{R_g} \cup \psi^{-1}_g(\D_{R_g}))$ and that there exists an annulus $\mathcal{R}\subset \C$ such that $\D_{R_g} \cup \psi^{-1}_g(\D_{R_g})$ is compactly contained in the bounded component of $\C \setminus \overline{\mathcal{R}}$, $A(R_f)$ in the unbounded one, and such that if $\mathcal{R}^{-}$ and $\mathcal{R}^{+}$ are the inner and outer boundaries of $\mathcal{R}$, then the curves $\psi_g(\mathcal{R}^{-})$ and $\phi_f(\mathcal{R}^{+})$ are the respective inner and outer boundaries of a topological annulus; see Figure \ref{fig: quasiconformally_equiv}.
\begin{figure}[htb] 

\begingroup%
  \makeatletter%
  \providecommand\color[2][]{%
    \errmessage{(Inkscape) Color is used for the text in Inkscape, but the package 'color.sty' is not loaded}%
    \renewcommand\color[2][]{}%
  }%
  \providecommand\transparent[1]{%
    \errmessage{(Inkscape) Transparency is used (non-zero) for the text in Inkscape, but the package 'transparent.sty' is not loaded}%
    \renewcommand\transparent[1]{}%
  }%
  \providecommand\rotatebox[2]{#2}%
  \newcommand*\fsize{\dimexpr\f@size pt\relax}%
  \newcommand*\lineheight[1]{\fontsize{\fsize}{#1\fsize}\selectfont}%
  \ifx\svgwidth\undefined%
    \setlength{\unitlength}{425.19685039bp}%
    \ifx\svgscale\undefined%
      \relax%
    \else%
      \setlength{\unitlength}{\unitlength * \real{\svgscale}}%
    \fi%
  \else%
    \setlength{\unitlength}{\svgwidth}%
  \fi%
  \global\let\svgwidth\undefined%
  \global\let\svgscale\undefined%
  \makeatother%
  \begin{picture}(1,0.63333333)%
    \lineheight{1}%
    \setlength\tabcolsep{0pt}%
    \put(0,0){\includegraphics[width=\unitlength,page=1]{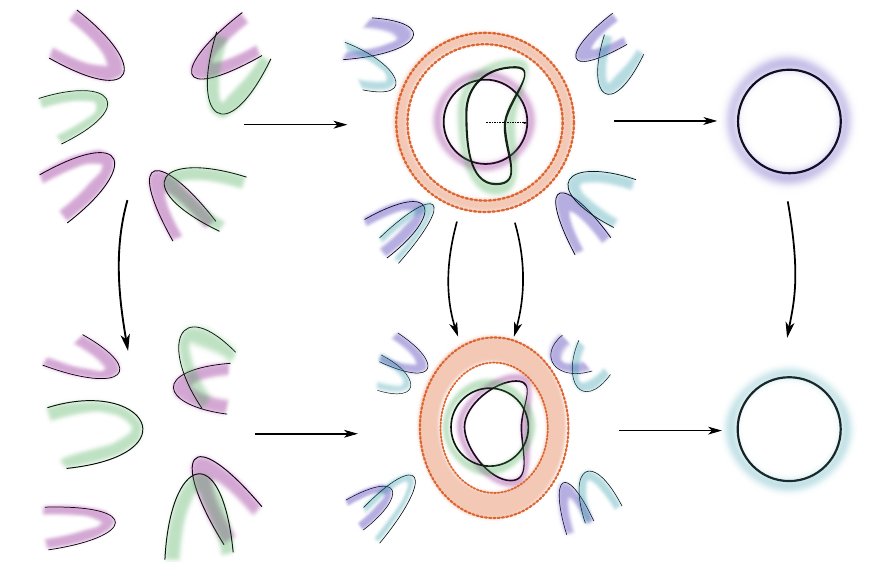}}%
    \put(0.60098262,0.31930562){\color[rgb]{0,0,0}\makebox(0,0)[lt]{\lineheight{1.25}\smash{\begin{tabular}[t]{l}$\varphi_f$\end{tabular}}}}%
    \put(0.45974742,0.31874461){\color[rgb]{0,0,0}\makebox(0,0)[lt]{\lineheight{1.25}\smash{\begin{tabular}[t]{l}$\psi_g$\end{tabular}}}}%
    \put(0.72852061,0.50844259){\color[rgb]{0,0,0}\makebox(0,0)[lt]{\lineheight{1.25}\smash{\begin{tabular}[t]{l}$f_1$\end{tabular}}}}%
    \put(0.47674566,0.60022756){\color[rgb]{0.8627451,0.37647059,0.16470588}\makebox(0,0)[lt]{\lineheight{1.25}\smash{\begin{tabular}[t]{l}$\mathcal{R}$\end{tabular}}}}%
    \put(0.32585215,0.15473022){\color[rgb]{0,0,0}\makebox(0,0)[lt]{\lineheight{1.25}\smash{\begin{tabular}[t]{l}$g_2$\end{tabular}}}}%
    \put(0.73060567,0.15763428){\color[rgb]{0,0,0}\makebox(0,0)[lt]{\lineheight{1.25}\smash{\begin{tabular}[t]{l}$f_2$\end{tabular}}}}%
    \put(0.0921122,0.32247884){\color[rgb]{0,0,0}\makebox(0,0)[lt]{\lineheight{1.25}\smash{\begin{tabular}[t]{l}$\varphi_g$\end{tabular}}}}%
    \put(0.90662791,0.33052338){\color[rgb]{0,0,0}\makebox(0,0)[lt]{\lineheight{1.25}\smash{\begin{tabular}[t]{l}$\psi_f$\end{tabular}}}}%
    \put(0.56900314,0.00099915){\color[rgb]{0.85882353,0.37647059,0.16470588}\makebox(0,0)[lt]{\lineheight{1.25}\smash{\begin{tabular}[t]{l}$\varphi_f(\mathcal{R}^+)$\end{tabular}}}}%
    \put(0.4228242,0.00526306){\color[rgb]{0.85882353,0.37647059,0.16470588}\makebox(0,0)[lt]{\lineheight{1.25}\smash{\begin{tabular}[t]{l}$\psi_g(\mathcal{R}^-)$\end{tabular}}}}%
    \put(0,0){\includegraphics[width=\unitlength,page=2]{qcequivinf.pdf}}%
    \put(0.14017185,0.62320618){\color[rgb]{0,0,0}\makebox(0,0)[lt]{\lineheight{1.25}\smash{\begin{tabular}[t]{l}$B(R_g)$\end{tabular}}}}%
    \put(0,0){\includegraphics[width=\unitlength,page=3]{qcequivinf.pdf}}%
    \put(0.57543742,0.62241292){\color[rgb]{0,0,0}\makebox(0,0)[lt]{\lineheight{1.25}\smash{\begin{tabular}[t]{l}$A(R_f)$\end{tabular}}}}%
    \put(0,0){\includegraphics[width=\unitlength,page=4]{qcequivinf.pdf}}%
    \put(0.29888786,0.50115977){\color[rgb]{0,0,0}\makebox(0,0)[lt]{\lineheight{1.25}\smash{\begin{tabular}[t]{l}$g_1$\end{tabular}}}}%
    \put(0,0){\includegraphics[width=\unitlength,page=5]{qcequivinf.pdf}}%
    \put(0.556,0.31947917){\color[rgb]{0,0,0}\makebox(0,0)[lt]{\lineheight{1.25}\smash{\begin{tabular}[t]{l}$\tilde{\psi}_g$\end{tabular}}}}%
    \put(0,0){\includegraphics[width=\unitlength,page=6]{qcequivinf.pdf}}%
    \put(0.19788461,0.32212474){\color[rgb]{0,0,0}\makebox(0,0)[lt]{\lineheight{1.25}\smash{\begin{tabular}[t]{l}$\tilde{\varphi}_g$\end{tabular}}}}%
    \put(0.53984044,0.50050517){\color[rgb]{0,0,0}\makebox(0,0)[lt]{\lineheight{1.25}\smash{\begin{tabular}[t]{l}$R_g$\end{tabular}}}}%
    \put(0.54336304,0.15564789){\color[rgb]{0,0,0}\makebox(0,0)[lt]{\lineheight{1.25}\smash{\begin{tabular}[t]{l}$R_g$\end{tabular}}}}%
    \put(0.89054779,0.15480815){\color[rgb]{0,0,0}\makebox(0,0)[lt]{\lineheight{1.25}\smash{\begin{tabular}[t]{l}$R_f$\end{tabular}}}}%
    \put(0.8906839,0.50191185){\color[rgb]{0,0,0}\makebox(0,0)[lt]{\lineheight{1.25}\smash{\begin{tabular}[t]{l}$R_f$\end{tabular}}}}%
  \end{picture}%
 \endgroup

\caption{Proof of Proposition \ref{prop_qcinfty} by interpolating the maps $\psi_g$ and $\phi_f$ using the annulus $\mathcal{R}$ shown in orange.}
\label{fig: quasiconformally_equiv}
\end{figure} 
	
By Proposition \ref{prop_letho}, we can interpolate $\psi_g$ and $\phi_f$ the following way: there exists a quasiconformal map $\tilde{\psi}_g :\C \rightarrow \C$ that agrees with $\psi_g$ in the bounded component of $\C\setminus \mathcal{R}$, and with $\phi_f$ in the unbounded component of $\C\setminus \mathcal{R}$. In particular, since by construction $\tilde{\psi}_g \equiv \psi_g$ in $(\partial \D_{R_g} \cup \psi^{-1}_g(\D_{R_g}))$, and by the Alexander trick, the isotopy class of a homeomorphism between two Jordan domains is determined by its boundary values,
\begin{equation}\label{eq_isotopy}
\tilde{\psi}_g \text{ is isotopic to } \psi_g \text{ in } X\defeq \C \setminus (\D_{R_g} \cup \psi^{-1}_g(\D_{R_g})) \text{ relative to } \partial X.
\end{equation}
	
Our next goal is to replace the map $\phi_g$ by another map $\tilde{\phi}_g$ by ``\emph{lifting}'' the map $\tilde{\psi}_g$ in such a way that the relation $\tilde{\psi}_g(g_1(z))=g_2(\tilde{\phi}_g(z))$ holds for all $z \in B(R_g)$. In order to do so, note that by construction, the restriction of $\phi_g$ to any connected component $C \subset B(R_g)$ is a homeomorphism into its image, and moreover, the closure of each domain $\phi_g(C)$ is by definition mapped injectively to $\C \setminus (\D_{R_g} \cup \psi_g(\D_{R_g}))$ under $g_2$. Hence, the inverse branches of $g_2$ from $\C \setminus (\D_{R_g} \cup \psi_g(\D_{R_g}))$ to each $\phi_g(C)$, which we denote by $g^{-1}_2\vert_{\phi_g(C) }$, are well-defined, and by \eqref{eq_defqcinf}, $\phi_g(C)=(g^{-1}_2\vert_{\phi_g(C)}\circ\psi_g\circ g_1)(C)$. We define, from each connected component $C$ of $B(R_g)$, a map $\tilde{\phi}_{g}\vert_{C}: C\rightarrow \phi_g(C)$ as
$$\tilde{\phi}_{g}\vert_{C} \defeq g^{-1}_2\vert_{\phi_g(C) }\circ \tilde{\psi}_g \circ g_1.$$
By \eqref{eq_isotopy} and since $\tilde{\psi}_g \equiv \psi_g$ in $g_1(\partial C)$, we have that $\phi_g$ and $\tilde{\phi}_{g}\vert_{C}$ are isotopic in $C$. Moreover, by construction, the continuous extension of $\tilde{\phi}_{g}\vert_{C} $ to $\partial C$ equals $\phi_{g}\vert_{\partial C}$. Thus, we can define a homeomorphism $\tilde{\phi}_g: \C \rightarrow \C$ as 
\begin{align*}
\tilde{\phi}_g(z)&\defeq \renewcommand{\arraystretch}{1.5}\left\{\begin{array}{@{}l@{\quad}l@{}} 
\tilde{\phi}_{g}\vert_{C} & \text{if} \quad z\in C \text{ for some } C \subset B(R_g);\\
\phi_g(z) & \text{otherwise.} \\
\end{array}\right.\kern-\nulldelimiterspace
\end{align*}	
By the glueing lemma, see \cite[Proposition 2.10]{lasseRigidity}, $\tilde{\phi}_g$ is a quasiconformal map. Consequently, $\tilde{\psi}_g(g_1(z))=g_2(\tilde{\phi}_g(z))$ for all $z\in B(R_g)$. Since, by construction, $\phi_f\equiv \tilde{\psi}_g$ in $A(R_f)$, we have that 
$$ (\psi_f \circ f_1\circ g_1)(z) = (f_2 \circ\phi_f \circ g_1)(z) = (f_2 \circ \tilde{\psi}_g \circ g_1)(z) = (f_2 \circ g_2 \circ \tilde{\phi}_g)(z)$$
for all $z\in \C$ such that $\max \{\vert f_1(g_1(z))\vert , \vert f_2 ( g_2 ( \tilde{\phi}_g(z)) \vert\}>R_f, $ as required.
\end{proof}

\begin{cor}[The class $\CB$ is closed under iteration]\label{cor_closediter}
If $f \in \CB$, then $f^{n} \in \CB$ for all $n\geq 0.$
\end{cor}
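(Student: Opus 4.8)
The plan is to verify directly the two conditions defining the class $\CB$ for the map $f^n$: that $f^n\in\B$, and that $J(\mu f^n)$ is a Cantor bouquet whenever $\mu\in\C^\ast$ has sufficiently small modulus. Assume $n\ge 2$, the remaining cases being trivial. The membership $f^n\in\B$ follows from the elementary inclusion $S(h_1\circ h_2)\subseteq S(h_1)\cup h_1(S(h_2))$ for $h_1,h_2\in\B$: since $\overline{S(h_1)}$ is bounded and $h_1$ is continuous, $h_1(\overline{S(h_2)})$ is compact, so $\overline{S(h_1\circ h_2)}$ is bounded; induction on $n$ then gives $f^n\in\B$.

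First I would fix $\lambda\in\C^\ast$ with $|\lambda|$ small enough that both $g\defeq\lambda f$ and $\lambda f^n$ are of disjoint type; this is possible by Proposition~\ref{prop_lambdadisjoint}, applied to $f$ and to $f^n\in\B$ respectively. Since $f\in\CB$, the Julia set $J(g)=J(\lambda f)$ is a Cantor bouquet. The next step is to check that the iterate $g^n$ is again of disjoint type: indeed $g^n\in\B$, the Fatou set $F(g^n)=F(g)$ is connected because $g$ is of disjoint type, and from $S(g^n)\subseteq\bigcup_{j=0}^{n-1}g^j(S(g))$ one gets $P(g^n)\subseteq P(g)\Subset F(g)=F(g^n)$, so the characterization of disjoint type recalled in Observation~\ref{obs_disj} applies. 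Moreover $J(g^n)=J(g)$, so $J(g^n)$ is a Cantor bouquet.

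The core of the proof is to transport this Cantor bouquet structure from $g^n$ to $\lambda f^n$. Since $g=\lambda f$ is quasiconformally equivalent near infinity to $f$ (take $\psi\colon w\mapsto w/\lambda$ and $\phi\defeq\id$), Proposition~\ref{prop_qcinfty} applied $n-1$ times shows that $g^n$ is quasiconformally equivalent near infinity to $f^n$; chaining this with the trivial near-infinity equivalence between $f^n$ and $\lambda f^n$, and using that compositions of quasiconformal maps are quasiconformal, we conclude that $g^n$ and $\lambda f^n$ are quasiconformally equivalent near infinity. As both maps are of disjoint type, Theorem~\ref{thm_lasseR31} gives a topological conjugacy between $J(g^n)$ and $J(\lambda f^n)$, which, as noted after Corollary~\ref{cor_allJg_CB}, is induced by an ambient homeomorphism of $\C$. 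Composing this ambient homeomorphism with one carrying $J(g^n)$ onto a straight brush shows that $J(\lambda f^n)$ is ambiently homeomorphic to a straight brush, i.e.\ a Cantor bouquet. Since $\lambda$ was an arbitrary sufficiently small parameter, this proves $f^n\in\CB$.

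I expect the main technical obstacle to be the bookkeeping in the last paragraph: one must track the exact domains on which the successive near-infinity equivalences produced by Proposition~\ref{prop_qcinfty} are valid, so as to be sure the final equivalence between $g^n$ and $\lambda f^n$ is of the form required by Theorem~\ref{thm_lasseR31}. By contrast, the verification that $g^n$ is of disjoint type and the identities $J(g^n)=J(g)$, $F(g^n)=F(g)$, $P(g^n)\subseteq P(g)$ are routine, but should be spelled out, since the whole argument hinges on these quantities being essentially unchanged under iteration.
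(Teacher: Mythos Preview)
Your outline is essentially the same as the paper's: show that $(\lambda f)^n$ is of disjoint type with Cantor bouquet Julia set $J((\lambda f)^n)=J(\lambda f)$, use Proposition~\ref{prop_qcinfty} iteratively to get that $(\lambda f)^n$ and $f^n$ (hence $\mu f^n$) are quasiconformally equivalent near infinity, and then conjugate the two disjoint type maps on their Julia sets.

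The obstacle you flag in your final paragraph is exactly the point where the paper diverges from your write-up, and it is a real gap in what you wrote: Theorem~\ref{thm_lasseR31} (which is \cite[Theorem~3.1]{lasseRidigity}) requires \emph{global} quasiconformal equivalence, not merely equivalence near infinity, and Proposition~\ref{prop_qcinfty} only yields the latter. There is no obvious way to upgrade the near-infinity equivalence between $(\lambda f)^n$ and $\mu f^n$ to a global one, so Theorem~\ref{thm_lasseR31} does not apply as stated. The paper does \emph{not} resolve this by tracking domains until a global equivalence emerges; instead it invokes a different result, \cite[Theorem~1.4]{lasseRidigity}, which gives a quasiconformal conjugacy on a neighbourhood of infinity directly from quasiconformal equivalence near infinity, and then observes that for $|\lambda|,|\mu|$ small enough this conjugacy extends to a neighbourhood of the Julia sets. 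With that substitution your argument goes through verbatim.
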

\begin{proof}
By Observation \ref{prop_lambdadisjoint}, we can choose $\lambda \in \C^\ast$ such that $h_\lambda \defeq \lambda f$ is a disjoint type function quasiconformally equivalent to $f$. Let us fix any $n>0$. By Proposition \ref{prop_qcinfty} used recursively $n-1$ times, the function $f^n$ is quasiconformally equivalent near infinity to $h_\lambda^n$. Moreover, again by Observation \ref{prop_lambdadisjoint}, we can choose $\mu \in \C^\ast$ such that $g_\mu \defeq \mu f^n$ is a disjoint type function quasiconformally equivalent to $f^n$. Then, $g_\mu$ and $h_\lambda^n$ are two disjoint type functions quasiconformally equivalent near infinity, and hence, they are quasiconformally conjugate on a neighbourhood of infinity \cite[Theorem 1.4]{lasseRigidity}. Moreover, by taking $\mu$ and $\lambda$ with sufficiently small modulus, one can see that that the conjugacy from \cite{lasseRigidity} extends to a neighbourhood of their Julia sets. In particular, by assumption, $J(h_\lambda)$ is a Cantor bouquet, and since Julia sets are completely invariant, $J(h_\lambda^n)$, and thus $J(g_\mu)$, are also Cantor bouquets. Consequently, we have shown that $f^n \in \CB$.
\end{proof}

\subsection*{Conjugacy near infinity}

In order to prove Theorems \ref{thm_intro_rays} and \ref{thm_intro_CB}, we shall use a general result from \cite{lasseRigidity}, that can be regarded as a sort of analogue of Böttcher's Theorem for functions in class $\B$. More specifically, that result allows us to conjugate the dynamics \textit{near infinity} of any function $f\in \CB$ to those of a disjoint type function in its parameter space, see Corollary \ref{cor_ridigity}. In particular, since by assumption any such disjoint type map has a Cantor bouquet Julia set, this conjugacy will imply the existence of a strongly absorbing Cantor bouquet in $J(f)$.

\begin{discussion}\label{discussion_nearinfty}
Let us fix $f\in \CB$ and choose any $K>0$ such that $S(f)\subset \D_K$. Let $L\geq K$ be any constant sufficiently large such that $f(\D_{K})\subset \D_L$. In particular, no preimage of $\C \setminus \D_L$ intersects $\D_{K}$, that is,
\begin{equation} \label{eq_defdisj}
f^{-1}\left(\C \setminus \D_L\right) \subset \C \setminus \D_{K}.
\end{equation}
Denote
\begin{equation} \label{eq_lambda}
\lambda\defeq \dfrac{K}{e^{8\pi}L} \quad \text{ and define} \quad g\defeq g_{\lambda} \colon \C \rightarrow \C \quad \text{ as }\quad g_\lambda(z)\defeq f(\lambda z).
\end{equation}
Then, for each $z\in \C$ such that $\vert g(z)\vert=\vert f(\lambda z)\vert>L$, by \eqref{eq_defdisj}, $\vert\lambda z\vert>K$, and so  $\vert z\vert>e^{8\pi}L$. That is, 
\begin{equation} \label{eq_disjt}
g^{-1}\left(\C \setminus \D_L\right) \subset \C \setminus \D_{\e^{8\pi}L}.
\end{equation}
By Proposition \ref{prop_disjoint} and Observation \ref{prop_lambdadisjoint}, $g$ is a disjoint type map in the parameter space of $f$. Let $\T_f$ be the set of \emph{tracts} of $f$, defined as the connected components of $f^{-1}\left(\C \setminus \D_L\right)$. Next, we fix logarithmic transforms for $f$ and $g$: let
\begin{equation*}
F: \mathcal{T}_F \rightarrow \H_{\log L}, \quad \ \text{with } \ \T_F\defeq \exp^{-1}(\T_f)\quad \text{and }\quad \H_{\log L}\defeq \exp^{-1}(\C \setminus \overline{\D_{L}}).
\end{equation*}
Note that by \eqref{eq_defdisj}, we have $\T_F \subset \H_{\log K}$. Then, the map
$$G\defeq G_{\lambda}: \T_G=(\T_F-\log \lambda)\rightarrow \H_{\log L} \quad \text{ given by } \quad G(w)\defeq F(w +\log\lambda)$$
is a logarithmic transform for $g$, since $ \e^{F(w +\log\lambda)}=f(\e^{w +\log\lambda})=f(\lambda\e^{w})=g(\e^{w})=\e^{G(w)}$. By the choice of the constant $\lambda$ in \eqref{eq_lambda}, it holds that 
\begin{equation}\label{eq_logG}
\T_{G} \subset \H_{\log L+8\pi}.
\end{equation}
\end{discussion}
We can now state the aforementioned result from \cite{lasseRigidity}, for the logarithmic transforms $F$ and $G$ just defined. For any logarithmic transform $F$ and constant $Q>0$, we denote
$$J_Q(F) \defeq \lbrace z \in J(F) : \Rea(F^{n}(z))\geq Q \text{ for all } n \geq 1\rbrace.$$
The following result is a compendium of \cite[Theorem 3.2, Lemma 3.3 and Theorem~3.4]{lasseRigidity}, in a version adapted to our setting\footnote{\cite[p. 250]{lasseRigidity} with $F_0=G$, $\kappa=-\log \lambda$, and $F_\kappa=F$.}:
\begin{thm}[Conjugacy near infinity for logarithmic transforms]\label{thm_ridigity} 
Let $\lambda$ be the constant, and let $F$ and $G$ be the logarithmic transforms, fixed in \ref{discussion_nearinfty}. For every $Q> 2\vert\log \lambda\vert+1$, there is a continuous map\footnote{The map $\Theta$ extends to a quasiconformal map $\Theta: \C \rightarrow \C$; see \cite[Theorem 3.4 or Theorem 1.1]{lasseRigidity}.} $\Theta\defeq \Theta^{\lambda} \colon J_Q(G) \rightarrow J(F)$ such that 
\begin{equation}\label{eq_ridigity}
\Theta \circ G= F \circ \Theta, \quad \qquad |\Theta(w)-w|\leq 2\vert\log\lambda\vert 
\end{equation}
and is a homeomorphism onto its image. Moreover, $J_{2Q}(F) \subset \Theta(J_Q(G))$ and $\Theta$ can be chosen so that $\Theta(w+2\pi i)=\Theta(w)+2\pi i$.
\end{thm}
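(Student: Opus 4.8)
The plan is to obtain Theorem~\ref{thm_ridigity} as a translation of \cite[Theorem 3.2, Lemma 3.3 and Theorem 3.4]{lasseRidigity} under the dictionary recorded in the footnote. First I would set $F_0 := G$, $\kappa := -\log\lambda$ and $F_\kappa := F$; by \eqref{eq_lambda} and the definition of $G$ in \ref{discussion_nearinfty} one has $F(w) = G(w+\kappa)$, so the translation family $F_t(w) := G(w+t)$, $t\in[0,\kappa]$, is precisely the interpolating path of logarithmic transforms to which the results of \cite[\S3]{lasseRidigity} are applied. Here each $F_t$ is a logarithmic transform of $z\mapsto f(\lambda e^t z)$, so the path runs from the disjoint type map $g$ at $t=0$ to $f$ at $t=\kappa$.

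Next I would verify the standing hypotheses of \cite[\S3]{lasseRidigity}: that $F_0 = G$ is a disjoint type logarithmic transform, which is \ref{discussion_nearinfty} together with Observation~\ref{obs_disj} and Proposition~\ref{prop_lambdadisjoint}; and that the tracts of every $F_t$ lie a definite distance inside $\H_{\log L}$, which follows from \eqref{eq_logG} and $\T_F\subset\H_{\log K}$, since $\T_{F_t} = \T_G - t$ and $\kappa = |\log\lambda| = 8\pi + \log L - \log K$. Granting this, \cite[Theorem 3.2]{lasseRidigity} furnishes, for every $Q$ above the threshold which in the present normalization reads $Q>2|\log\lambda|+1$, a map $\Theta\colon J_Q(G)\to J(F)$ that conjugates $G$ to $F$, satisfies $|\Theta(w)-w|\le 2|\log\lambda|$, and is a homeomorphism onto its image; this is \eqref{eq_ridigity}. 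The inclusion $J_{2Q}(F)\subset\Theta(J_Q(G))$ is \cite[Lemma 3.3]{lasseRidigity}: a point $z\in J(F)$ with $\Rea F^n(z)\ge 2Q$ for all $n$ has, by the displacement bound $2|\log\lambda|<Q$, a matching orbit of the uncorrected dynamics staying in $\{\Rea\ge Q\}$, hence it lies in the image of $\Theta$. Finally, the equivariance $\Theta(w+2\pi i) = \Theta(w)+2\pi i$ and the existence of a quasiconformal extension of $\Theta$ to $\C$ are part of \cite[Theorem 3.4]{lasseRidigity}.

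I expect the only genuine work, and therefore the main obstacle, to be the bookkeeping in the previous paragraph: one must check that the half-plane $\H_{\log L}$, the sign convention $\kappa=-\log\lambda$, the precise displacement constant $2|\log\lambda|$ and the precise threshold $Q>2|\log\lambda|+1$ used in \cite{lasseRidigity} match those fixed in \ref{discussion_nearinfty}, and that the set $J_Q(\cdot)$, defined here through real parts of iterates, is the object to which \cite{lasseRidigity} refers. None of this is deep, but it is the only place a careless slip could occur; modulo it, the statement follows immediately from the three cited results.
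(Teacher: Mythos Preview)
Your proposal is correct and matches the paper's approach exactly: the paper does not prove Theorem~\ref{thm_ridigity} either, but simply records it as a direct consequence of \cite[Theorem~3.2, Lemma~3.3 and Theorem~3.4]{lasseRidigity} under the dictionary $F_0=G$, $\kappa=-\log\lambda$, $F_\kappa=F$, with a one-line remark that the normalization hypothesis in \cite{lasseRidigity} can be replaced by \eqref{eq_logG}. Your write-up is in fact more explicit than the paper's, spelling out the interpolating family $F_t$ and the verification of the tract inclusions, but the substance is identical.
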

\begin{remark}
In \cite[Theorem 3.2]{lasseRigidity}, it is assumed that the logarithmic transforms $F$ and $G$ are \textit{normalized}. Even if this might not be the case for the transforms $F$ and $G$ fixed in \ref{discussion_nearinfty}, all that is required in the proof of \cite[Theorem 3.2]{lasseRigidity} is that \eqref{eq_logG} holds, and thus the theorem applies in our setting.
\end{remark}
We can transfer this result to the dynamical planes of $f$ and $g$. Recall from \eqref{eq_JR} that for each entire function $f$ and constant $R\geq 0$, we denote
\[J_R(f) \defeq \lbrace z \in J(f) : |f^n(z)|\geq R \text{ for all } n \geq 1\rbrace \quad \text{ and } \quad I_R(f)\defeq I(f) \cap J_R(f).\]
\begin{cor}[Conjugacy near infinity]\label{cor_ridigity} Let $\lambda$ be the constant, and let $f$ and $g$ be the functions, fixed in \ref{discussion_nearinfty}. Then, for every $R>\exp(2\vert\log \lambda\vert +8\pi +\log L)$, there exists a continuous map $\theta\defeq \theta_R \colon J_R(g)\rightarrow J(f) \cap \mathcal{T}_f$ such that 
\begin{equation}\label{eq_commute_cor}
\theta \circ g =f\circ \theta
\end{equation}
and is a homeomorphism onto its image. Moreover, $J_{e^2R}(f) \subset \theta(J_R(g))$, and for every point $z \in J_R(g)$, $\theta(z) \in \overline{A(\lambda^{2}\vert z\vert, \lambda^{-2}\vert z\vert)}$. In particular, $\theta(I_R(g))\subset I(f)$. 
\end{cor}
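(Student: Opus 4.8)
The plan is to pull the conjugacy $\Theta$ of Theorem~\ref{thm_ridigity} back to the dynamical planes of $f$ and $g$ through the exponential map. Fix $R$ as in the statement, put $Q\defeq\log R$, and recall that since $g$ is of disjoint type, $\exp(J(G))=J(g)$ and $g^n(\exp w)=\exp(G^n(w))$ for $w\in J(G)$; hence $\exp$ restricts to a $2\pi i\Z$-covering $J_Q(G)\to J_R(g)$ (i.e.\ $J_Q(G)=\exp^{-1}(J_R(g))\cap\H_{\log L}$). Because $R>\exp(2|\log\lambda|+8\pi+\log L)$ we have $Q>2|\log\lambda|+1$, so Theorem~\ref{thm_ridigity} produces $\Theta\colon J_Q(G)\to J(F)$ with $\Theta\circ G=F\circ\Theta$, $|\Theta(w)-w|\le 2|\log\lambda|$, $\Theta(w+2\pi i)=\Theta(w)+2\pi i$, and $J_{2Q}(F)\subset\Theta(J_Q(G))$. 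I would then define $\theta\defeq\theta_R\colon J_R(g)\to\C$ by $\theta(z)\defeq\exp(\Theta(w))$ for any $w\in J_Q(G)$ with $\exp(w)=z$; the $2\pi i$-equivariance of $\Theta$ makes this independent of $w$, so $\theta$ is exactly the map induced by $\exp\circ\Theta$ on the quotient $J_R(g)=J_Q(G)/2\pi i\Z$.

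Next I would verify the soft properties. Continuity of $\theta$ holds because $\exp|_{J_Q(G)}$ is a covering onto $J_R(g)$, hence admits continuous local sections along which $\theta$ agrees with $\exp\circ\Theta$; injectivity follows from injectivity of $\Theta$ and the equivariance (if $\exp\Theta(w_1)=\exp\Theta(w_2)$ then $\Theta(w_1)=\Theta(w_2+2\pi ik)$ for some $k\in\Z$, so $w_1=w_2+2\pi ik$ and $z_1=z_2$), and the local-section description shows $\theta$ is a homeomorphism onto its image. The conjugacy \eqref{eq_commute_cor} is immediate since $G(w)$ is a lift of $g(z)$: $\theta(g(z))=\exp(\Theta(G(w)))=\exp(F(\Theta(w)))=f(\exp(\Theta(w)))=f(\theta(z))$ by \eqref{eq_commute_log}. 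The annular estimate is $|\theta(z)|=\exp(\Rea\Theta(w))\in[e^{-2|\log\lambda|}|z|,e^{2|\log\lambda|}|z|]=[\lambda^2|z|,\lambda^{-2}|z|]$, and if $z\in I_R(g)$ then $\Rea G^n(w)\to\infty$, whence $\Rea F^n(\Theta(w))=\Rea\Theta(G^n(w))\ge\Rea G^n(w)-2|\log\lambda|\to\infty$ and $\theta(z)\in I(f)$.

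The one genuinely dynamical point is that $\theta$ lands in $J(f)\cap\mathcal{T}_f$. The difficulty is that $f$ need not be of disjoint type, so $\exp(J(F))\subset J(f)$ is unavailable; instead I would apply Observation~\ref{rem_juliaLog} to the \emph{forward} orbit. For $w\in J_Q(G)$ and $n\ge1$ one has $\Rea F^n(\Theta(w))=\Rea\Theta(G^n(w))\ge\Rea G^n(w)-2|\log\lambda|\ge Q-2|\log\lambda|>\log L+8\pi$; hence $X\defeq F(\Theta(J_Q(G)))=\Theta(G(J_Q(G)))$ satisfies $X\subset J(F)\cap\H_{\log L+8\pi}$ and $F(X)\subset X$ (using $G(J_Q(G))\subset J_Q(G)$). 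Observation~\ref{rem_juliaLog} then gives $\exp(X)\subset J(f)$; but $\exp(X)=f(\theta(J_R(g)))$ by \eqref{eq_commute_log}, so $\theta(J_R(g))\subset f^{-1}(J(f))=J(f)$ by complete invariance. The same bound with $n=1$ yields $|f(\theta(z))|=\exp(\Rea\Theta(G(w)))>L$, so $\theta(z)\in\mathcal{T}_f$.

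Finally, for $J_{e^2R}(f)\subset\theta(J_R(g))$ I would exponentiate the inclusion $J_{2Q}(F)\subset\Theta(J_Q(G))$: a point $z\in J(f)$ all of whose iterates $f^n(z)$ ($n\ge1$) are far enough from the origin lifts under $\exp$ to $w_0\in J(F)$ with $\Rea F^n(w_0)=\log|f^n(z)|$, so once $|f^n(z)|$ is above the relevant threshold we get $w_0\in\Theta(J_Q(G))$ and $z=\exp(w_0)\in\theta(J_R(g))$. I expect the delicate part to be precisely this threshold bookkeeping across $\exp$ — squeezing out the clean factor $e^2$ rather than a cruder constant, which relies on the sharp form of the surjectivity statement in \cite{lasseRidigity} underlying Theorem~\ref{thm_ridigity} — together with checking that $\exp|_{J_Q(G)}$ genuinely covers $J_R(g)$ so that the descent $\theta$ is well defined; the rest is bookkeeping with \eqref{eq_commute_log} and the conjugacy $\Theta\circ G=F\circ\Theta$.
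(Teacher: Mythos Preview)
Your proposal is correct and follows essentially the same route as the paper: descend $\Theta$ from Theorem~\ref{thm_ridigity} through $\exp$ using its $2\pi i$-equivariance, read off the semiconjugacy and the annular estimate from \eqref{eq_ridigity}, and invoke Observation~\ref{rem_juliaLog} to place the image inside $J(f)$. Your choice to apply Observation~\ref{rem_juliaLog} to $X=F(\Theta(J_Q(G)))=\Theta(G(J_Q(G)))$ and then pull back via complete invariance of $J(f)$ is a slight refinement of the paper's direct application to $\Theta(J_Q(G))$ (it sidesteps having to bound $\Rea w$ from below for $w\in J_Q(G)$), but otherwise the two arguments coincide.
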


\begin{proof}
Let us fix any constant $R$ as in the statement, and let $Q\defeq \log(R)$. Note that by assumption, $Q> 2\vert \log \lambda \vert +8\pi +\log L$, and hence, we can apply Theorem \ref{thm_ridigity} to the logarithmic transforms $F$ and $G$ defined in \ref{discussion_nearinfty}. In particular, by \eqref{eq_ridigity} and the lower bound on $Q$, it holds that $$\Theta(J_Q(G))\subset \H_{Q-2\vert \log \lambda \vert}\subset \H_{\log L + 8\pi} \cap J(F).$$
In addition, by the commutative relation in \eqref{eq_ridigity}, $F(\Theta(J_Q(G))) \subset \Theta(J_Q(G))\subset J(F)$. Hence, since $F$ is a logarithmic transform of $f$, by Observation \ref{rem_juliaLog} applied to the set $\Theta(J_Q(G))$, it holds that $\exp(\Theta(J_Q(G)))\subset J(f)\cap \mathcal{T}_f$. Moreover, since $g$ is of disjoint type, by \eqref{eq_Juliadisjoint}, $\exp(J(G))= J(g)$. By all of the above and since the map $\Theta$ is $2\pi i$-periodic, there exists a map $\theta \colon J_R(g) \rightarrow J(f)$ defined by the relation $\exp \circ \Theta =\theta\circ\exp$. Then, 
$$\theta \circ g \circ \exp =\theta \circ \exp \circ G =\exp\circ \Theta\circ G=\exp \circ F \circ \Theta=f \circ \exp \circ \Theta= f \circ \theta \circ \exp,$$ 
and since $\exp$ is a continuous surjective map, \eqref{eq_commute_cor} holds. This is depicted in the following diagram:
\[ \begin{tikzcd}
J_Q(G) \arrow[d,"\Theta" '] \arrow["G" ,rrr, bend left =22] \arrow[r, "\exp"] & {\color{blue} J_R(g)} \arrow[r, blue, "g"] \arrow[d, blue, "\theta"']& {\color{blue} J_R(g)} \arrow[d, blue, "\theta"] & J_Q(G) \arrow[d,"\Theta"] \arrow[l, "\exp"'] \\
J(F) \arrow["F"' ,rrr, bend right =22] \arrow[r, "\exp"] &{\color{blue} J(f)} \arrow[r, blue, "f"] & {\color{blue} J(f)} & J(F) \arrow[l, "\exp"']. 
\end{tikzcd}
\]
By \eqref{eq_ridigity}, for any $w\in J_Q(G)$, 
$$w, \Theta(w),  \in \overline{V(\Rea(w) -2 \vert \log \lambda \vert, \Rea(w) +2 \vert \log \lambda \vert)}.$$
Hence, if $z \in J_R(g)$, then $z, \theta(z) \in \overline{A(\lambda^{2}\vert z \vert, \lambda^{-2}\vert z \vert)}$, and in particular, $\theta(I_R(g))\subset I(f)$.
\end{proof}

It is possible to define external addresses for any $f\in \B$ as equivalence classes. We follow \cite[p. 2107]{lasseBrushing}.
\begin{defn}[External addresses for $f\in \B$]
Let $f\in\B$ and let $F$ be a logarithmic transform of $f$. For $\s= T_0 T_1 T_2 \dots$ and $\underline{\tau} = T_0' T_1' T_2' \dots$ in $\Addr(J(F)),$
\begin{equation}\label{eq_simAddr}
\s \sim \underline{\tau} \quad \iff \quad T_0'=T_0+2\pi i k \text{ for some } k\in \Z \text{ and } T_j=T_j' \text{ for all }j>0.
\end{equation}
An \textit{external address} of $f$ is an equivalence class of $\Addr(J(F))/\sim.$
\end{defn}

We note that the definition of external addresses we have provided is equivalent to the more natural definition in terms of some simply connected unbounded domains called \textit{fundamental domains}, see e.g. \cite{lasse_dreadlocks, mio_thesis}. For details on this equivalence, see for example \cite{Benini_Fagella2017}.

\begin{remark}[Correspondence between addresses] \label{obs_corresp_addr}
Following Corollary \ref{cor_ridigity}, the map $\theta$ establishes a $1$-to-$1$ order-preserving correspondence between external addresses of $f$ and $g$. This follows from \cite[Proof of Theorem 3.2]{lasseRigidity}, where it is stated that the map $\Theta$ establishes such bijection between external addresses of their logarithmic transforms. For more details, see \cite[Proposition~4.47 and Observation~4.48]{mio_thesis}.
\end{remark}

We note that Corollary \ref{cor_ridigity} does not require the function $f$ to be in class $\CB$ but only in $\B$, and thus $J(g)$ being a Cantor bouquet has not been used yet. We shall do so in the next theorem, from which Theorems \ref{thm_intro_CB} and \ref{thm_intro_rays} follow, and that summarizes the properties of these maps required in the proofs of the results in \cite{mio_splitting}. 
\begin{thm}\label{thm_CB} Let $f\in \CB$ and let $K>0$ so that $S(f)\subset \D_K$. Then, there exists a disjoint type map $g$ that has a strongly absorbing Cantor bouquet $X\subset J(g)$, and a  continuous map $\hat{\theta} \colon J(g)\rightarrow J(f)$ with the following properties: 
\begin{enumerate}[label=(\alph*)]
\item \label{item:inclusions}$ \hat{\theta}(J(g)) \cup J(g)\subset \C\setminus \D_K$;
\item \label{item:commute}  $\hat{\theta} \circ g =f\circ \hat{\theta} $, except in a bounded set $B\subset \C\setminus X$;
\item \label{item:boundedC} there is a bounded set $C$ so that if $z\in B$, then the curve $[\hat{\theta}(g(z)), f(\hat{\theta}(z))]$ belongs to $C\cap f^{-1}(\C \setminus \D_K)$.
\item \label{item:homeomX} $\hat{\theta}\vert_X$ is a homeomorphism onto its image;
\item \label{item:imageBouquet}$\hat{\theta}(X)$ is a strongly absorbing Cantor bouquet in $J_K(f)$, and so $f$ is criniferous;
\item Each hair of $\hat{\theta}(X)$ is either a ray tail or a dynamic ray with its endpoint;\label{item:rays}
\item  \label{item:annulus} there is 
$M\defeq M(K)>0$ such that for every $z \in J(g)$, $\hat{\theta}(z) \in \overline{A(M^{-1}\vert z\vert, M\vert z\vert)}$; in particular, $\hat{\theta}(X\cap I(g))\subset I(f)$;
\item  \label{item:addr} the map $\hat{\theta}$ establishes an order-preserving one-to-one correspondence between external addresses of $g$ and $f$.
\end{enumerate}
\end{thm}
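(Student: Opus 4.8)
The plan is to build $\hat\theta$ as the composition $\theta_R\circ\pi_R$ of the near-infinity conjugacy of Corollary~\ref{cor_ridigity} with the continuous projection of Theorem~\ref{thm_pi_cont}, and then read off the eight properties. First I would invoke the set-up of~\ref{discussion_nearinfty}: given $K$ with $S(f)\subset\D_K$, fix $L\geq K$ with $f(\D_K)\subset\D_L$, set $\lambda\defeq K/(e^{8\pi}L)$ and $g\defeq g_\lambda$ with $g_\lambda(z)=f(\lambda z)$. By Proposition~\ref{prop_lambdadisjoint} and Observation~\ref{obs_disj}, $g$ is a disjoint type map in the parameter space of $f$, and~\eqref{eq_disjt} gives $g^{-1}(\C\setminus\D_L)\subset\C\setminus\D_{e^{8\pi}L}$, so $J(g)\subset\C\setminus\D_K$; since $f\in\CB$, Proposition~\ref{prop_classCB} shows that $J(g)$ is a Cantor bouquet. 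Now I would fix $R>\exp(2|\log\lambda|+8\pi+\log L)$. For this $R$, Corollary~\ref{cor_ridigity} yields a continuous $\theta_R\colon J_R(g)\to J(f)\cap\T_f$ that is a homeomorphism onto its image, satisfies $\theta_R\circ g=f\circ\theta_R$, $J_{e^2R}(f)\subset\theta_R(J_R(g))$, $\theta_R(z)\in\overline{A(\lambda^2|z|,\lambda^{-2}|z|)}$, and induces an order-preserving bijection of external addresses (Observation~\ref{obs_corresp_addr}); and Theorem~\ref{thm_pi_cont}, applicable because $S(g)=S(f)\subset\D_L$ and $g^{-1}(\C\setminus\D_L)\subset\C\setminus\D_{e^{8\pi}L}$, yields a continuous $\pi_R\colon J(g)\to J_R(g)$ whose image $X\defeq\pi_R(J(g))$ is a strongly absorbing Cantor bouquet with $g(X)\subset X$ and $\pi_R|_X=\id$, such that the set $B$ of points where $\pi_R$ fails to intertwine $g$ with itself is bounded and satisfies, for $z\in B$, $[\pi_R(g(z)),g(\pi_R(z))]\subset\pi_R(g(\overline B))$, and such that $\pi_R(z)\in\overline{A(M_0^{-1}|z|,M_0|z|)}$ for some $M_0>R$. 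Since $X\subset J_R(g)$, the map $\hat\theta\defeq\theta_R\circ\pi_R\colon J(g)\to J(f)\cap\T_f$ is defined and continuous.

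Most of the properties are then routine. Property~\ref{item:inclusions} holds since $J(g)\subset\C\setminus\D_K$ and $\hat\theta(J(g))\subset\theta_R(J_R(g))\subset J(f)\cap\T_f\subset f^{-1}(\C\setminus\D_L)\subset\C\setminus\D_K$ by~\eqref{eq_defdisj}. For~\ref{item:commute}, note $B\subset\C\setminus X$ because for $z\in X$ one has $\pi_R(z)=z$ and $g(z)\in X$, whence $\pi_R(g(z))=g(z)=g(\pi_R(z))$; and off $B$, using $\theta_R\circ g=f\circ\theta_R$ on $J_R(g)$, one gets $\hat\theta(g(z))=\theta_R(\pi_R(g(z)))=\theta_R(g(\pi_R(z)))=f(\theta_R(\pi_R(z)))=f(\hat\theta(z))$. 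For~\ref{item:boundedC}, the same identities give $\hat\theta(g(z))=\theta_R(\pi_R(g(z)))$ and $f(\hat\theta(z))=\theta_R(g(\pi_R(z)))$ for $z\in B$, while $\theta_R$ carries the arc in $J_R(g)$ between two points of a hair to the arc between their images; so by Theorem~\ref{thm_pi_cont}(c), $[\hat\theta(g(z)),f(\hat\theta(z))]\subset C\defeq\theta_R(\pi_R(g(\overline B)))$, which is compact, hence bounded, and contained in $J(f)\cap\T_f\subset f^{-1}(\C\setminus\D_K)$. Property~\ref{item:homeomX} is immediate since $\hat\theta|_X=\theta_R|_X$. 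For~\ref{item:annulus} I compose the annulus bounds of $\pi_R$ and $\theta_R$ to obtain $\hat\theta(z)\in\overline{A((M_0\lambda^{-2})^{-1}|z|,M_0\lambda^{-2}|z|)}$, so $M\defeq M_0\lambda^{-2}$ works; and for $z\in X\cap I(g)$ one has $\pi_R(z)=z\in I(g)\cap J_R(g)=I_R(g)$, so $\hat\theta(z)=\theta_R(z)\in\theta_R(I_R(g))\subset I(f)$. Finally~\ref{item:addr} follows because $\pi_R$ maps $J_{\s}(g)$ onto $X\cap J_{\s}(g)\neq\emptyset$ (it stays within the hair of address $\s$), so $\hat\theta$ realizes the same address bijection as $\theta_R$.

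The substantial points are~\ref{item:imageBouquet} and~\ref{item:rays}. For~\ref{item:imageBouquet}: since $\theta_R$ extends to a quasiconformal, hence ambient, homeomorphism of $\C$ (footnote to Theorem~\ref{thm_ridigity}), $\hat\theta(X)=\theta_R(X)$ is ambiently homeomorphic to the Cantor bouquet $X$ and so is itself a Cantor bouquet, and $f(\hat\theta(X))=\theta_R(g(X))\subset\theta_R(X)=\hat\theta(X)$ as $g(X)\subset X$. To get $J_{R'}(f)\subset\hat\theta(X)$, pick $R''>0$ with $J_{R''}(g)\subset X$ and put $R'\defeq\max(e^2R,\lambda^{-2}R'')$; for $w\in J_{R'}(f)$, write $w=\theta_R(v)$ with $v\in J_R(g)$ (possible as $R'\geq e^2R$), note $g^n(v)\in J_R(g)$ and $\theta_R(g^n(v))=f^n(w)$ for all $n\geq0$, so the annulus bound forces $|g^n(v)|\geq\lambda^2|f^n(w)|\geq\lambda^2R'\geq R''$ for all $n\geq1$, whence $v\in J_{R''}(g)\subset X$ and $w\in\hat\theta(X)$. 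Thus every $z\in I(f)\subset J(f)$ has $f^N(z)\in J_{R'}(f)\subset\hat\theta(X)$ for large $N$, and since (by~\ref{item:rays}) the hairs of $\hat\theta(X)$ are ray tails or dynamic rays with their endpoints, $f^N(z)$ lies on a ray tail, so $f$ is criniferous. For~\ref{item:rays}, by the final assertion of Proposition~\ref{prop_Ts} each hair $\eta$ of $J(g)$ is a dynamic ray of $g$ with its endpoint, so the hair $\pi_R(\eta)=\{w\succeq z_\eta\}$ of $X$ is (by Proposition~\ref{prop_Ts}\ref{itemC_JCB}) a ray tail of $g$ when $z_\eta\in I(g)$, and the whole dynamic ray $\eta$ with non-escaping endpoint $z_\eta$ otherwise; since $\hat\theta|_X$ is injective, intertwines $g$ and $f$, and distorts moduli by a bounded factor (by~\ref{item:annulus}), it maps $\pi_R(\eta)$ to a ray tail of $f$, respectively to a dynamic ray of $f$ with its non-escaping endpoint $\hat\theta(z_\eta)$.

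The main obstacle I expect is property~\ref{item:boundedC}: one must justify that $\theta_R$ carries the arc furnished by Theorem~\ref{thm_pi_cont}(c) to exactly $[\hat\theta(g(z)),f(\hat\theta(z))]$ — resting on $\theta_R$ being an order-preserving homeomorphism onto its image that respects the hair structure of $J_R(g)$ — together with the bookkeeping needed to choose a single $R$ (and the derived constants $R'$, $M$) for which Theorem~\ref{thm_pi_cont}, Corollary~\ref{cor_ridigity}, and the inclusion $J_{R'}(f)\subset\hat\theta(X)$ all hold at once.
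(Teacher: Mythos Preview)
Your proposal is correct and follows essentially the same route as the paper: define $\hat\theta=\theta_R\circ\pi_R$ using Corollary~\ref{cor_ridigity} and Theorem~\ref{thm_pi_cont}, then verify \ref{item:inclusions}--\ref{item:addr} one by one. In a couple of places you are in fact more careful than the paper: you explicitly invoke the quasiconformal extension of $\Theta$ to justify that $\hat\theta(X)$ is \emph{ambiently} homeomorphic to a straight brush (the paper only cites that $\theta|_X$ is a homeomorphism), and for \ref{item:boundedC} you take $C=\theta_R(\pi_R(g(\overline B)))=\hat\theta(g(\overline B))$, which is what Theorem~\ref{thm_pi_cont}\ref{item:pi_SR} actually delivers, whereas the paper writes $C=\hat\theta(\overline B)$.
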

\begin{proof}
Let $f\in \CB$, let $K>0$ as in the statement, and let $g\defeq g_\lambda$ be the disjoint type map defined in \ref{discussion_nearinfty}. In particular, $g$ belongs to the parameter space of $f$, and so by Proposition~\ref{prop_classCB}, $J(g)$ is a Cantor bouquet. Let us fix some constant $R>\exp(2\vert\log \lambda\vert +8\pi +L)$. Then, by Theorem \ref{thm_pi_cont}, there exists a strongly absorbing Cantor bouquet $X\subset J(g)$ and a continuous surjective map $\pi\colon J(g)\to X$ such that $X\subset J_R(g)$. Let $\theta\colon J_R(g)\to J(f)$ be the map from Corollary~\ref{cor_ridigity}, and define
\begin{equation*}
\hat{\theta}\defeq \theta\circ\pi\colon J(g)\to J(f),
\end{equation*}
which is a continuous function, as it is the composition of two continuous maps.

Since by Theorem \ref{thm_pi_cont}\ref{item:pi_id}, $\pi\vert_X$ is the identity map, and by Corollary \ref{cor_ridigity}, $\theta\vert_X$ is a homeomorphism to $\theta(X)$, \ref{item:homeomX} follows and $\hat{\theta}(X)$ is a Cantor bouquet. Since $\theta \circ g =f\circ \theta$ in $J_R(g)\supset X$,  
$$f(\hat{\theta}(X))=(f\circ\theta\circ\pi)(X)=(\theta\circ g)(X)\subset \theta(X)=\hat{\theta}(X).$$
Moreover, since $\hat{\theta}(X)\subset \theta(J_R(g))\subset J(f)\cap \T_f\subset \C \setminus \D_K$, using that $\theta \circ g =f\circ \theta$ in $J_R(g)$ we get that $\hat{\theta}(X)\subset J_K(f)$. As $X$ is strongly absorbing, $J_Q(g)\subset X$ for all $Q>R$ sufficiently large. Let us fix any such $Q> e^2R$ and let $z \in J_{\lambda^{-2}Q}(f)$. Since $\vert\lambda\vert<1$, $z \in J_{e^{2}R}(f)$, and so, there exists a unique $w\in J_R(g)$ such that $\theta(w)=z$. Since for all $n\geq 0$, $\vert f^n(z) \vert=\vert \theta(g^n(w)) \vert>\lambda^{-2}Q$, by Corollary \ref{cor_ridigity}, $\vert g^n(w)\vert> Q$, and so $w\in J_Q(g)$. Thus, $J_{\lambda^{-2}Q}(f)\subset \theta(J_Q(g))\subset \hat{\theta}(X).$ We have then shown that $\hat{\theta}(X)$ is a strongly absorbing Cantor bouquet. By Corollary~\ref{cor_CB_criniferous}, $f$ must be criniferous, and so \ref{item:imageBouquet} is proved. Moreover, by Proposition \ref{prop_Ts}, each hair of $J(g)$, and so of $X$, is either a ray tail or a dynamic ray with its endpoint. Since, in $\theta \circ g =f\circ \theta$ in $J_R(g)\supset X$, \ref{item:rays} holds.

Recall that $g$ has been defined so that $g^{-1}\left(\C \setminus \D_L\right) \subset \C \setminus \D_{\e^{8\pi}L}$ for some $L>K$. In particular, since $J(g)=\bigcap_{n\geq 0} g^{-n}(\C \setminus \D_L)$, see Proposition \ref{prop_disjoint}, $J(g)\subset \C \setminus \D_L$. In addition, since by Corollary \ref{cor_ridigity}, for all $z \in J_R(g)$, $\theta(z) \in \overline{A(\lambda^{2}\vert z\vert, \lambda^{-2}\vert z\vert)}$, and $R > \lambda^{-2}L>K$,
$$\hat{\theta}(J(g))\subset \C\setminus \D_{\lambda^{-2}R}\subset \C \setminus \D_K. $$
Thus, we have shown \ref{item:inclusions}. Moreover, by Theorem~\ref{thm_pi_cont}, there exists $N>R$ such that $\pi(z) \in \overline{A(N^{-1}\vert z \vert, N \vert z \vert)}$ for all $z\in J(g).$ Then, \ref{item:annulus} holds letting $M\defeq \lambda^{-2}N$.

Let $B\subset J(g)$ be the bounded set of points $z$ for which  $g(\pi(z))\neq \pi(g(z))$ provided by Theorem \ref{thm_pi_cont}\ref{item:pi_SR}. In particular, \ref{item:commute} follows. Moreover, by this result and \eqref{eq_commute_cor}, if $z\in B$, then $$\theta([\pi(g(z)),g(\pi(z))])=[\hat{\theta}(g(z)), f(\hat{\theta}(z))]\subset \hat{\theta}(\overline{B})\eqdef C.$$
Since $\hat{\theta}(\overline{B})\subset \hat{\theta}(J(g))\subset \T_f\subset f^{-1}(\C\setminus \D_K)$, \ref{item:boundedC} is proved. Finally, \ref{item:addr} is a direct consequence of the remark after Corollary \ref{cor_ridigity}. 
\end{proof}

\begin{proof}[Proof of Theorem \ref{thm_intro_CB}] It is a direct consequence of Theorem \ref{thm_CB}\ref{item:imageBouquet}. 
\end{proof}

\begin{proof}[Proof of Theorem \ref{thm_intro_rays}] If $f\in \CB$, then by Theorem \ref{thm_CB}\ref{item:imageBouquet}, $f$ is criniferous. The rest of the statement follows from \cite[Proposition 2.3]{RRRS}.
\end{proof}

\bibliographystyle{alpha}
\bibliography{biblioComplex}

\begin{thebibliography}{{Par}19b}

\bibitem[AO93]{AartsOversteegen}
J.~Aarts and L.~Oversteegen.
\newblock The geometry of {J}ulia sets.
\newblock {\em Trans. Amer. Math. Soc.}, 338(2):897--918, 1993.

\bibitem[Bar07]{Baranski_Trees}
K.~Bara\'{n}ski.
\newblock Trees and hairs for some hyperbolic entire maps of finite order.
\newblock {\em Math. Z.}, 257(1):33--59, 2007.

\bibitem[BF20]{Benini_Fagella2017}
A.~M. Benini and N.~Fagella.
\newblock Singular values and non-repelling cycles for entire transcendental
  maps.
\newblock {\em Indiana Univ. Math. J.}, 69:1543--1558, 2020.

\bibitem[BJR12]{lasseBrushing}
K.~Bara{\'{n}}ski, X.~Jarque, and L.~Rempe.
\newblock Brushing the hairs of transcendental entire functions.
\newblock {\em Topology and its Applications}, 159(8):2102--2114, 2012.

\bibitem[BR20]{lasse_dreadlocks}
A.M. Benini and L.~{Rempe}.
\newblock A landing theorem for entire functions with bounded post-singular
  sets.
\newblock {\em Geom. Funct. Anal.}, 30:1465–1530, 2020.

\bibitem[DH84]{Orsaynotes}
A.~Douady and J.~H Hubbard.
\newblock {\em \'Etude dynamique des polynomes complexes}.
\newblock Orsay : {U}niversite de {P}aris-{S}ud, {D}ept. de {M}ath\'ematique,
  1984.

\bibitem[DK84]{devaney_Krych}
R.~L. Devaney and M.~Krych.
\newblock {Dynamics of $\exp(z)$}.
\newblock {\em Ergodic Theory and Dynamical Systems}, 4(1):35–52, 1984.

\bibitem[DT86]{devaney_tangerman}
Robert~L. Devaney and Folkert Tangerman.
\newblock Dynamics of entire functions near the essential singularity.
\newblock {\em Ergodic Theory Dynam. Systems}, 6(4):489--503, 1986.

\bibitem[EL92]{eremenkoclassB}
A.~{Erë}menko and M.~Lyubich.
\newblock Dynamical properties of some classes of entire functions.
\newblock {\em Ann. Inst. Fourier (Grenoble)}, 42(4):989--1020, 1992.

\bibitem[Er{\"{e}}89]{erem89}
A.~E. Er{\"{e}}menko.
\newblock On the iteration of entire functions.
\newblock {\em Banach Center Publications}, 23(1):339--345, 1989.

\bibitem[Fat26]{Fatou}
P.~Fatou.
\newblock Sur l'it\'{e}ration des fonctions transcendantes {E}nti\`eres.
\newblock {\em Acta Math.}, 47(4):337--370, 1926.

\bibitem[GK86]{goldberg_keen_86}
L.~R. Goldberg and L.~Keen.
\newblock A finiteness theorem for a dynamical class of entire functions.
\newblock {\em Ergodic Theory and Dynamical Systems}, 6(2):183–192, 1986.

\bibitem[Leh65]{lehto}
O.~Lehto.
\newblock An extension theorem for quasiconformal mappings.
\newblock {\em Proc. London Math. Soc.}, 14a:187–190, 1965.

\bibitem[LV73]{lehto_quasiconformal}
O.~Lehto and K.~Virtanen.
\newblock {\em Quasiconformal mappings in the plane \textit{(Second edition)}}.
\newblock Springer-Verlag, New York-Heidelberg, 1973.
\newblock Translated from the German by K. W. Lucas, Die Grundlehren der
  mathematischen Wissenschaften, Band 126.

\bibitem[{Mih}12]{helenaSemi}
H.~{Mihaljevi{\'c}-Brandt}.
\newblock Semiconjugacies, pinched {C}antor bouquets and hyperbolic orbifolds.
\newblock {\em Trans. Amer. Math. Soc.}, 364(8):4053--4083, 2012.

\bibitem[{Par}19a]{mio_thesis}
L.~{Pardo Sim\'on}.
\newblock Dynamics of transcendental entire functions with escaping singular
  orbits.
\newblock {\em PhD thesis, University of Liverpool}, 2019.

\bibitem[{Par}19b]{mio_splitting}
L.~{Pardo-Sim\'on}.
\newblock Splitting hairs with transcendental entire functions.
\newblock {\em Preprint, ArXiv:1905.03778v3}, 2019.

\bibitem[{Par}20]{mio_cosine}
L.~{Pardo-Sim\'on}.
\newblock Topological dynamics of cosine maps.
\newblock {\em Preprint, arXiv:2003.07250}, 2020.

\bibitem[{Par}21]{mio_orbifolds}
L.~{Pardo-Sim\'on}.
\newblock Orbifold expansion and entire functions with bounded {F}atou
  components.
\newblock {\em Ergodic Theory and Dynamical Systems}, page 1–40, 2021.

\bibitem[Pom92]{pommerenke_boundary}
Ch. Pommerenke.
\newblock {\em Boundary behaviour of conformal maps}, volume 299 of {\em
  Grundlehren der Mathematischen Wissenschaften [Fundamental Principles of
  Mathematical Sciences]}.
\newblock Springer-Verlag, Berlin, 1992.

\bibitem[Rem09]{lasseRigidity}
L.~Rempe.
\newblock Rigidity of escaping dynamics for transcendental entire functions.
\newblock {\em Acta Math.}, 203(2):235--267, 2009.

\bibitem[{Rem}16]{lasse_arclike}
L.~{Rempe-Gillen}.
\newblock Arc-like continua, {J}ulia sets of entire functions, and {E}remenko's
  {C}onjecture.
\newblock {\em Preprint, arXiv:1610.06278v3}, 2016.

\bibitem[RRRS11]{RRRS}
G.~Rottenfu{\ss}er, J.~R\"{u}ckert, L.~Rempe, and D.~Schleicher.
\newblock Dynamic rays of bounded-type entire functions.
\newblock {\em Annals of Mathematics (2)}, 173(1):77--125, 2011.

\bibitem[RS08]{dierkCosine}
G.~Rottenfu{\ss}er and D.~Schleicher.
\newblock Escaping points of the cosine family.
\newblock In Philip~J. Rippon and Gwyneth~M. Stallard, editors, {\em
  Transcendental Dynamics and Complex Analysis}, pages 396--424. Cambridge
  University Press, 2008.
\newblock Cambridge Books Online.

\bibitem[Sch93]{Schiff_normal}
Joel~L. Schiff.
\newblock {\em Normal families}.
\newblock Universitext. Springer-Verlag, New York, 1993.

\bibitem[Six18]{Dave_survey}
D.~J. Sixsmith.
\newblock Dynamics in the {E}remenko-{L}yubich class.
\newblock {\em Conform. Geom. Dyn.}, 22:185--224, 2018.

\bibitem[SZ03]{Schleicher_Zimmer_exp}
D.~Schleicher and J.~Zimmer.
\newblock Escaping points of exponential maps.
\newblock {\em Journal of the London Mathematical Society}, 67(2):380--400, 4
  2003.

\bibitem[Vuo88]{vuorinen2006conformal}
M.~Vuorinen.
\newblock {\em Conformal {G}eometry and Quasiregular Mappings}.
\newblock Lecture Notes in Mathematics, 1319. Springer-Verlag, Berlin, 1988.

\end{thebibliography}
\end{document}